\providecommand{\tabularnewline}{\\}
\theoremstyle{plain}
\newtheorem{thm}{\protect\theoremname}
  \theoremstyle{plain}
  \newtheorem{assumption}[thm]{\protect\assumptionname}
  \theoremstyle{definition}
  \newtheorem{example}[thm]{\protect\examplename}
 \newcommand\thmsname{\protect\theoremname}
 \newcommand\nm@thmtype{theorem}
 \theoremstyle{plain}
 \newenvironment{namedthm}[1][Undefined Theorem Name]{
   \ifx{#1}{Undefined Theorem Name}\renewcommand\nm@thmtype{theorem*}
   \else\renewcommand\thmsname{#1}\renewcommand\nm@thmtype{namedtheorem}
   \fi
   \begin{\nm@thmtype}}
   {\end{\nm@thmtype}}
  \theoremstyle{plain}
  \newtheorem{lem}[thm]{\protect\lemmaname}
  \theoremstyle{remark}
  \newtheorem{rem}[thm]{\protect\remarkname}
  \theoremstyle{plain}
  \newtheorem{prop}[thm]{\protect\propositionname}
  \theoremstyle{plain}
  \newtheorem{cor}[thm]{\protect\corollaryname}
\renewcommand{\hat}{\widehat}
\renewcommand{\tilde}{\widetilde}
  \providecommand{\assumptionname}{Assumption}
  \providecommand{\corollaryname}{Corollary}
  \providecommand{\examplename}{Example}
  \providecommand{\lemmaname}{Lemma}
  \providecommand{\propositionname}{Proposition}
  \providecommand{\remarkname}{Remark}
  \providecommand{\theoremname}{Theorem}
\providecommand{\theoremname}{Theorem}
\begin{document}
\global\long\def\E{\mathbb{E}_{\sigma,b,\gamma}}

\global\long\def\Var{\operatorname{Var}_{\sigma,b,\gamma}}

\global\long\def\I{\mathbf{1}}

\global\long\def\N{\mathbb{N}}

\global\long\def\R{\mathbb{R}}

\global\long\def\Z{\mathbb{Z}}

\global\long\def\F{\mathbb{\mathcal{F}}}

\global\long\def\L{\mathbb{\mathcal{L}}}

\global\long\def\P{\mathbb{P}_{\sigma,b,\gamma}}

\global\long\def\S{\mathcal{S}}

\global\long\def\T{\mathcal{T}}

\global\long\def\U{\mathcal{U}}

\global\long\def\epsilon{\varepsilon}

\global\long\def\MT{\clubsuit}

\global\long\def\JC{\spadesuit}

\title{Spectral estimation for diffusions with random sampling times}

\author{Jakub Chorowski and Mathias Trabs%
\thanks{The authors thank Markus Reiß for helpful comments and discussions.
J.C. was financially supported by the Deutsche Forschungsgemeinschaft
(DFG) RTG 1845 \textquotedblright Stochastic Analysis with Applications
in Biology, Finance and Physics\textquotedblright . M.T. acknowledges
financial support by the DFG through the CRC 649 ``Economic Risk''
and the research fellowship TR 1349/1-1. The main part of the paper
was carried out while M.T. was employed at the Humboldt-Universität
zu Berlin.%
}}

\date{Humboldt-Universität zu Berlin and Université Paris-Dauphine}
\maketitle
\begin{abstract}
The nonparametric estimation of the volatility and the drift coefficient
of a scalar diffusion is studied when the process is observed at random
time points. The constructed estimator generalizes the spectral method
by Gobet, Hoffmann and Reiß {[}Ann. Statist. 32 (2006), 2223-2253{]}.
The estimation procedure is optimal in the minimax sense and adaptive
with respect to the sampling time distribution and the regularity
of the coefficients. The proofs are based on the eigenvalue problem
for the generalized transition operator. The finite sample performance
is illustrated in a numerical example.
\end{abstract}
\begin{doublespace}
\textbf{MSC2010 subject classification:} Primary 62M05; Secondary
60J60, 62G99, 62M15. 
\end{doublespace}

\noindent \textbf{Key words and phrases:} Ergodic diffusion processes,
generalized transition operator, Lepski's method, minimax optimal
convergence rates, nonparametric estimation, random sampling.

\section{Introduction}

For decades diffusion models are used to describe the dynamics of
continuous stochastic processes, for instance, stock prices in econometrics
or particle movements in biology and physics. The statistical properties
of diffusion models depend essentially on the observation scheme,
where it is natural to assume discrete observations of the process.
Mostly, equidistant observations are studied in the literature, distinguishing
between high-frequent and low-frequent observations, depending whether
the observation distance tends to zero or remains fixed. A summary
of parametric methods is given by \citet{aitsahalia2010}. Nonparametric
estimation methods are surveyed by \citet{fan2005}. 

As argued by \citet{ait-sahaliaMykland:2003}, assuming equidistant
observations might however not be realistic in many applications and
random sampling times should be instead considered. For parametric
estimation problems \citet{ait-sahaliaMykland:2003,ait-sahaliaMykland:2004}
have shown that random sampling has a strong effect on the statistical
problem and the performance of estimators. Naturally, the question
arises how nonparametric estimators can be constructed for random
sampling times and whether their (asymptotic) behavior is similar
or worse than for equidistant observations.

In order to study the nonparametric estimation of the drift and the
volatility coefficient of the diffusion when the process is observed
at random times, we generalize the low-frequency results by \citet{GobetHoffmannReiss:2004}.
As they do, we consider a reflected scalar diffusion on a one-dimensional
interval. On the one hand, this allows to avoid technical difficulties
and to present more transparent proofs when investigating spectral
properties of the transition semigroup. On the other hand, diffusions
with reflecting barriers have rich applications. In the finance and
economics literature reflected diffusions are used for currency exchange
rate target-zone models, in which the exchange rate is allowed to
float within two barriers enforced by the monetary authority c.f.
\citep{BallRoma:1998,Krugman:1991,Svensson:1990}. Reflected diffusions
also appear as the payoff of the so-called ``Russian Options'',
c.f. \citet{SheppShiryaev:1993}. Among applications in mathematical
biology, we recall models for population dynamics in which the total
number of individuals is affected by oppositely acting forces, e.g.,
spontaneous growth and immigration on the one hand and random harvesting
or predation on the other, c.f. \citep{Ricciardi:1986}. Finally reflected
Brownian motion have been shown to describe queueing models experiencing
heavy traffic, see \citep{IglehartWhitt:1970,Kingman:1962}. In all
these models the observation times might not be equidistantly distributed.
For instance, they depend on trading times for finance applications
or measurement times of the biologist. 

By the compactness of the interval and the reflecting boundary, the
diffusion is ergodic and admits a spectral gap. Our procedure relies
on a representation of the coefficients in terms of the invariant
measure and the first non-trivial eigenpair of the infinitesimal generator
of the diffusion. This spectral identification method was introduced
in \citet{HansenScheinkmanTouzi1998:} and has been further studied
by \citep{ChenHansenScheinkman:2009}. It is crucial that the eigenpair
is determined by the transition operator of the time changed diffusion,
where the time change is given by the sampling distribution and the
Laplace transform of the sampling distribution. The former can be
estimated by a wavelet projection method and latter by classical empirical
process theory. As a side product of our analysis we clarify some
aspects of the estimator and the proofs by \citet{GobetHoffmannReiss:2004}.
In particular, in order to stabilize the estimator against large stochastic
errors a truncation with an in practice unknown threshold value was
needed, which we could omit. 

Moreover, we show that Lepski's method can be applied to chose the
projection level in a data-driven way. This allows to adapt on the
unknown Sobolev regularity of the drift and volatility coefficients
of the diffusion. The first adaptive estimator based on low-frequency
observations of a diffusion process has been constructed only recently
in \citet{SohlTrabs:2014}. Considering diffusion on the whole real
line, this first result is restricted to a diffusion with constant
volatility, simplifying the whole estimation problem, we do not need
any additional restrictions on the drift or the volatility. 

We prove that the estimators achieve minimax optimal convergence rates.
The adaptive estimator only loses a logarithmic factor. In view of
the \emph{cost of randomness} determined by \citet{ait-sahaliaMykland:2004},
it might be surprising that the convergence rates do not depend on
the sampling distribution and coincide in fact with the nonparametric
rates of the low-frequency setting. In that sense, our method is also
adaptive with respect to the unknown sampling distribution. As one
can see clearly from simulations, there is, however, a large \emph{cost
of ignoring the randomness} in the misspecified case where one applies
the low-frequency estimator to randomly sampled observations using
the average time step as observations distance.

The paper is organized as follows: In Section~\ref{sec:model} we
introduce the diffusion with reflected boundaries, our basic assumptions
and the main properties of the process. The estimators are constructed
in Section~\ref{sec:EstimationMethod}. The main results on the convergence
rates are stated and discussed in Section~\ref{sec:mainResults}.
The adaptive estimator is constructed in Section~\ref{sec:adaptive}.
The finite sample performance of the method is illustrated in a small
simulation study in Section~\ref{sec:sim}. The proofs of the upper
and lower bounds as well as for the Lepski method are postponed to
Sections~\ref{sec:Proofs}, \ref{sec:PrLowBound} and \ref{sec:proofAdaptiv},
respectively. Finally, some results on the stability of the eigenvalue
problems are presented in the appendix.

\section{The model\label{sec:model}}

Without loss of generality we can consider the unit interval $[0,1]$
for the reflecting diffusion. For a measurable and bounded drift function
$b\colon[0,1]\to\R$ and a continuous volatility function $\sigma\colon[0,1]\to\R_{+}$
let the process $X=\{X_{t}:t\ge0\}$ be given by the stochastic differential
equation
\begin{eqnarray}
dX_{t} & = & b\left(X_{t}\right)dt+\sigma\left(X_{t}\right)dW_{t}+v\left(X_{t}\right)dY_{t}\left(X\right),\label{eq:SDE}\\
X_{0} & = & x_{0},\,\text{and for all }t\geq0\ X_{t}\in[0,1],\nonumber 
\end{eqnarray}
where $x_{0}$ is a random variable on $[0,1]$, $W=\{W_{t}:t\geq0\}$
is a standard Brownian motion, $v\colon[0,1]\to\R$ satisfies $v(0)=1,v(1)=-1$,
and $Y$, which is part of the solution, is a non-anticipative continuous
non-decreasing process increasing only when $X_{t}\in\left\{ 0,1\right\} $.
By the Engelbert-Schmidt theorem boundedness of the drift coefficient
together with the volatility function being continuous and strictly
positive ensure that (\ref{eq:SDE}) has a weak solution, see \citet[Thm. 4.1]{RozkoszSlominski:1997}.
We denote by $\mathbb{P}_{\sigma,b}$ the law of this solution on
the canonical space $\Omega=C(\R_{+},[0,1])$ of continuous functions
equipped with the topology of uniform convergence on compact subsets
and endowed with its Borel $\sigma-$field $\F.$

For $N\in\N$ our observations are given by
\[
(0,X_{0}),(\tau_{1},X_{\tau_{1}}),\dots,(\tau_{N},X_{\tau_{N}})\in[0,\infty)\times[0,1]
\]
where $\tau_{1},\dots,\tau_{N}$ is an increasing sequence of random
time points. For convenience we write $\tau_{0}=0$.
\begin{assumption}
\label{ass:times}Let the observation distances
\[
\Delta_{n}:=\tau_{n}-\tau_{n-1},\quad n=1,\dots,N,
\]
be an independent and identically distributed sequence of strictly
positive random variables with law 
\[
\gamma\in\Gamma:=\Gamma(I,\alpha):=\big\{\gamma\text{ probability measure on }\R_{+}:\gamma(I)\ge\alpha\big\}
\]
for some compact interval $I\subset(0,\infty)$ and some $\alpha\in(0,1]$.
Let $\Delta_{n}$ be independent of the diffusion process $X$. 
\end{assumption}
This condition on the sampling distributions is very weak. For every
given positive distribution $\gamma$ there are $I,\alpha$ such that
$\gamma\in\Gamma(I,\alpha)$. The only restrictions are that the set
$\Gamma$ has to be bounded in the right sense, since we will derive
uniform rates in this class, and we have to exclude distributions
that concentrate at zero. The latter condition is natural because
otherwise the observations would be of high-frequency type which would
require a completely different analysis.
\begin{example}
\label{ex:samplingDist}~
\begin{enumerate}
\item The special case of the low-frequency observations is covered by setting
$\tau_{n}=n\Delta$ for some fixed deterministic $\Delta>0$. Then
the sampling distribution is given by the Dirac measure in $\Delta$,
that is $\Gamma=\{\delta_{\Delta}\}$.
\item If the observation times are governed by a Poisson process, the waiting
time to the next observation is exponentially distributed, that is
$\gamma=Exp(\lambda)$ for some intensity $\lambda>0$. In this case
we can choose $\Gamma=\{Exp(\lambda):\lambda\in\Lambda\}$ for any
bounded set $\Lambda\subset(0,\infty).$
\end{enumerate}
\end{example}
To state the assumptions on the diffusion coefficients, we denote
the $L^{2}([0,1])$ \emph{Sobolev space} of order $s>0$ by $H^{s}:=H^{s}([0,1])$.
Furthermore, let $H_{b}^{s}\subset H^{s}$ be the subset of bounded
functions with Sobolev regularity $s.$ Note that $H_{b}^{s}=H^{s}$
for $s>1/2$ by the Sobolev embeddings.
\begin{assumption}
\label{ass:drift volatility}For $s>1$ and constants $d,D>0$ let
$(\sigma,b)\in\Theta_{s}$ where
\[
\Theta_{s}:=\Theta_{s}(d,D)=\left\{ (\sigma,b)\in H^{s}\times H_{b}^{s-1}:\|\sigma^{2}\|_{H^{s}}\leq D,\,\|b\|_{H^{s-1}}\leq D,\inf_{x}\sigma(x)\geq d\right\} .
\]

\end{assumption}
In particular, $(\sigma,b)\in\Theta_{s}$ ensures the existence of
a weak solution of (\ref{eq:SDE}). As shown by \citet{GobetHoffmannReiss:2004}
the compactness of $[0,1]$ and the reflecting boundary conditions
imply that $X$ has a spectral gap and thus it is geometrically ergodic
and admits an invariant measure $\mu$. Focusing on asymptotic results,
we can suppose that the initial value $x_{0}$ is distributed according
to $\mu$. Assumption~\ref{ass:drift volatility} implies that $\mu$
has the Lebesgue density, abusing notation denoted by $\mu$ as well,
\begin{equation}
\mu(x):=\mu_{\sigma,b}(x)=C_{0}\sigma^{-2}(x)\exp\Big(\int_{0}^{x}2b(x)\sigma^{-2}(y)\, dy\Big),\quad x\in[0,1],\label{eq:mu}
\end{equation}
for some normalizing constant $C_{0}>0$, cf. \citet[Chap. 4]{Bass:1995}
or \citet[Chap. 15, Sect. 6]{KarlinTaylor1981:}. It is easy to see
that the regularity assumptions on $b$ and $\sigma$ imply that $\mu\in H^{s}$,
which will be essential for the analysis of the estimators. From the
explicit formula for $\mu$ moreover follows that there are constants
$0<c<C$ such that $c\le\mu_{\sigma,b}\le C$ for any $(\sigma,b)\in\Theta_{s}$.
Consequently, $L^{2}(\mu)$ with the inner product 
\[
\langle f,g\rangle_{\mu}:=\int_{0}^{1}f(x)g(x)\mu(x)dx
\]
 is a Hilbert space equivalent to $L^{2}\left([0,1]\right)$. 

Noting that reflection corresponds to Neumann boundary conditions,
the infinitesimal generator $L=L_{\sigma,b}$ of the diffusion $X$
is an unbounded, densely defined operator on $L^{2}([0,1])$ satisfying
\begin{eqnarray*}
Lf(x) & = & b(x)f'(x)+\frac{1}{2}\sigma^{2}(x)f''(x),\\
\text{dom}(L) & = & \left\{ f\in H^{2}([0,1]):f'(0)=f'(1)=0\right\} .
\end{eqnarray*}
Furthermore, seen as an operator on the Hilbert space $L^{2}(\mu)$,
the generator $L$ is an elliptic, self-adjoint operator with compact
resolvent, see \citet[Example 4.21]{Chatelin:1983}. Consequently
it has a pure point spectrum $\sigma(L)=\left\{ v_{k}:k=0,1,...\right\} $
and the corresponding eigenfunctions $u_{k}$ form an $L^{2}(\mu)$
orthogonal basis. Its largest eigenvalue $v_{0}$ equals $0$ with
constant corresponding eigenfunction. All other eigenvalues are negative
and we assume that they are ordered with respect to their multiplicities
$0>v_{1}\geq v_{2}\geq...$ . As shown in \citep[ Lemma 6.1]{GobetHoffmannReiss:2004},
the eigenvalue $v_{1}$ is simple and the eigenfunction $u_{1}$ can
be chosen strictly increasing.

\section{Estimation method\label{sec:EstimationMethod}}

\subsection{Spectral identification}

The main idea used for the construction of the spectral estimators
in \citep{GobetHoffmannReiss:2004} is that the coefficients of a
stationary diffusion process can be expressed in terms of the invariant
density $\mu$ and any nontrivial eigenpair $(v_{k},u_{k})$, $k\ge1$.
Indeed, expressing the invariant measure in terms of the speed measure
together with the Neumann boundary conditions yields, cf. \citep[Sect. 3.1]{GobetHoffmannReiss:2004},

\begin{align}
\sigma^{2}(x) & =\frac{2v_{k}\int_{0}^{x}u_{k}(y)\mu(y)dy}{u_{k}'(x)\mu(x)},\label{eq:sigma}\\
b(x) & =\frac{v_{k}u_{k}(x)}{u_{k}'(x)}-\frac{\sigma^{2}(x)u_{k}''(x)}{2u_{k}'(x)}\label{eq:b}\\
 & =v_{k}\frac{u_{k}(x)u'_{k}(x)\mu(x)-u_{k}''(x)\int_{0}^{x}u_{k}(y)\mu(y)dy}{u_{k}'(x)^{2}\mu(x)}.\nonumber 
\end{align}
Applying the ergodicity, it is easy to estimate the invariant measure
$\mu$. To recover an eigenpair of the generator, \citet{GobetHoffmannReiss:2004}
have used discrete equidistant observations, i.e. $\Delta_{n}=\Delta$
for some fixed $\Delta>0$, to construct a matrix estimator of the
transition operator $P_{\Delta}=e^{\Delta L}$. Noting that $P_{\Delta}$
shares eigenfunctions with the generator $L$ while its eigenvalues
are $e^{\Delta v_{k}}$, $k=0,1,...$, they have obtained estimators
of $(v_{k},u_{k})$. We will generalize these results taking into
account the random observation times $\tau_{1},\dots,\tau_{N}$. 

Similar to the transition operator $P_{\Delta}$ we introduce the
\emph{generalized transition operator} $R$ on $L^{2}(\mu)$ given
by

\begin{equation}
Rf(x)=\E\left[f(X_{\tau})|X_{0}=x\right],\quad x\in[0,1],\label{eq:R}
\end{equation}
where $\tau$ is a random variable with distribution $\gamma$ being
independent of the process $X$. The crucial insight is that for any
eigenpair $(v_{k},u_{k})$ of the generator we have 
\begin{equation}
Ru_{k}(x)=\E\big[\E[P_{t}u_{k}|\tau=t]\big]=\mathbb{E}_{\gamma}\left[e^{\tau v_{k}}\right]u_{k}(x)=\underset{=:\kappa_{k}}{\underbrace{\L_{\gamma}(-v_{k})}}\cdot u_{k}(x),\label{eq:kappa}
\end{equation}
where
\begin{equation}
\mathcal{L}_{\gamma}(z):=\int_{0}^{\infty}e^{-tz}\gamma(dt),\quad z\in\R_{+},\label{eq:Laplace}
\end{equation}
is the Laplace transform of $\gamma.$ Consequently, $R$ is a compact
operator with eigenvalues $1=\kappa_{0}>\kappa_{1}>\kappa_{2}\geq\kappa_{3}\geq...>0$.
In the functional calculus sense we obtain 
\[
R=\L_{\gamma}\left(-L\right).
\]

Therefore, we can estimate the eigenpairs $(v_{k},u_{k})$ using the
spectral properties of $R$. Since the sampling distribution $\gamma$
is unknown, we need to estimate the Laplace transform from the observations
$(\Delta_{n})_{n=1,\dots,N}$.
\begin{namedthm}[Example~\ref{ex:samplingDist} (continued)]

\begin{enumerate}
\item For $\Delta_{n}\equiv\Delta$ for some fixed $\Delta>0$ we have $Rf=P_{\Delta}f$
and $\mathcal{L}_{\gamma}(z)=e^{-\Delta z},z\ge0$. We thus exactly
recover the situation studied in \citep{GobetHoffmannReiss:2004}.
\item If $\Delta_{n}\sim Exp(\lambda)$, then the Laplace transform is given
by $\mathcal{L}_{\gamma}(z)=\int_{0}^{\infty}\lambda e^{-t(z+\lambda)}dt=\frac{\lambda}{z+\lambda},z\ge0$
and the operator $R$ is the resolvent of the generator $L$. 
\end{enumerate}
\end{namedthm}
The distribution of the eigenvalues of the operator $R$ is inherited
from the generator $L$ and the sampling distribution $\gamma$. More
precisely, we obtain the following lemma whose proof is postponed
to Section~\ref{sub:Spectral properties of R}.
\begin{lem}
\label{lem:eigenvaluesR}Grant Assumptions~\ref{ass:times} and \ref{ass:drift volatility}.
The spectral gap, that is $\inf_{i\neq1}|\kappa_{i}-\kappa_{1}|$,
and the eigenvalues of the generalized transition operator $R$ have
a lower bound uniform in $(\sigma,b)\in\Theta_{s}$ and $\gamma\in\Gamma$.
\end{lem}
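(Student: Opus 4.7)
The plan is to reduce everything to spectral properties of the generator $L$ via the key identity \eqref{eq:kappa}, $\kappa_k=\mathcal{L}_\gamma(-v_k)=\int_0^\infty e^{tv_k}\gamma(dt)$, and then to combine uniform spectral estimates for $L$ on $\Theta_s$ with the restriction $\gamma(I)\ge\alpha$ that defines $\Gamma$. Because $v_k\le 0$, restricting the integral to $I=[t_{\min},t_{\max}]$ immediately gives $\kappa_k\ge\alpha\,e^{t_{\max}v_k}$, so uniform positivity of $\kappa_k$ for each fixed index $k$ is equivalent to a uniform upper bound on $|v_k|$ over $\Theta_s$.

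For the spectral gap $\inf_{i\neq 1}|\kappa_i-\kappa_1|$ I would split $i=0$ from $i\ge 2$. The case $i=0$ yields
\begin{equation*}
1-\kappa_1=\int_0^\infty (1-e^{tv_1})\,\gamma(dt)\ge\alpha\bigl(1-e^{-t_{\min}|v_1|}\bigr),
\end{equation*}
which is uniformly positive as soon as $|v_1|$ has a uniform positive lower bound. In the second case, the ordering $\kappa_1>\kappa_2\ge\kappa_i$ reduces the problem to lower bounding
\begin{equation*}
\kappa_1-\kappa_2=\int_0^\infty e^{tv_2}\bigl(e^{t(v_1-v_2)}-1\bigr)\gamma(dt)\ge\alpha\,e^{t_{\max}v_2}\bigl(e^{t_{\min}(v_1-v_2)}-1\bigr),
\end{equation*}
which is uniformly positive as soon as $|v_2|$ is uniformly bounded above and the spectral gap $v_1-v_2$ is uniformly bounded below on $\Theta_s$.

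What remains is to establish these uniform bounds on $L$. Writing $L$ in divergence form $Lf=(2\mu)^{-1}(\sigma^2\mu f')'$ together with the Neumann boundary conditions makes the Dirichlet form on $L^2(\mu)$ explicit: $\langle -Lf,f\rangle_\mu=\tfrac{1}{2}\int_0^1\sigma^2(f')^2\mu\,dx$. The formula \eqref{eq:mu} for $\mu$ and the Sobolev embedding ($s>1$) yield uniform sandwich bounds $0<c\le\mu\le C$ as well as $d^2\le\sigma^2\le C(D)$ on $\Theta_s$. The min--max principle applied with the fixed test function $f(x)=x-\int_0^1 y\mu(y)\,dy$ gives a uniform upper bound on $-v_1$, while a weighted Poincaré inequality on $[0,1]$ with uniformly bounded weight delivers the matching uniform positive lower bound. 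The uniform separation $v_1-v_2\ge\delta>0$ is the delicate step, since a direct Rayleigh quotient comparison with the Neumann Laplacian only yields $-v_k\asymp k^2$ with constants that may swallow the factor $4$ between the first two Laplacian eigenvalues; one rather has to appeal to sharper Sturm--Liouville comparison in the spirit of Lemma~6.1 of \citet{GobetHoffmannReiss:2004}. This separation is the main obstacle; once it is in hand, the translation back to $R$ via the Laplace representation is routine.
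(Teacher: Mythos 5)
Your argument is correct and is essentially the paper's own proof: the reduction $\kappa_{k}=\int_{0}^{\infty}e^{tv_{k}}\gamma(dt)$ restricted to the interval $I$ with $\gamma(I)\ge\alpha$, together with the split of the gap into $1-\kappa_{1}$ and $\kappa_{1}-\kappa_{2}$, is exactly what the paper does. The uniform facts about $L$ that you single out as the remaining obstacle --- the two-sided bound $C_{1}k^{2}\le-v_{k}\le C_{2}k^{2}$ and the uniform separation $v_{1}-v_{2}\ge s_{0}$ --- are precisely the inputs (\ref{eq:eigenvalue bounds}) and (\ref{eq:spectral gap}) that the paper imports from Lemma 6.1 of \citet{GobetHoffmannReiss:2004} without reproving them, so your deferral to that lemma leaves no gap relative to the paper's argument.
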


\subsection{Construction of the estimators}

Let us fix some notation. We will write $f\lesssim g$ (resp. $g\gtrsim f$)
when $f\leq C\cdot g$ for some universal constant $C>0$. $f\sim g$
is equivalent to $f\lesssim g$ and $g\lesssim f$. Let $(\psi_{\lambda})$,
with multi-indices $\lambda=(j,k)$, be an $L^{2}-$orthonormal regular
wavelet basis of $L^{2}([0,1]).$ The corresponding approximation
spaces are given by 
\[
V_{J}:=\overline{\text{span}}\big\{\psi_{\lambda}:|\lambda|=\left|(j,k)\right|:=j\leq J\big\}.
\]
The $L^{2}-$orthogonal and the $L^{2}(\mu)-$orthogonal projections
onto $V_{J}$ are denoted by $\pi_{J}$ and $\pi_{J}^{\mu}$, respectively. 

In fact, the approximation spaces do not necessarily need to be generated
by wavelets. We only require that $V_{J},J\in\N,$ satisfy Jackson
and Bernstein type inequalities with respect to the Sobolev spaces
$H^{s}$, that is for all $0\leq t\leq s,f\in H^{s}$ and $g\in V_{J}$
\begin{align}
\|(I-\pi_{J})f\|_{H^{t}} & \lesssim2^{-J(s-t)}\|f\|_{H^{s}}\quad\text{and}\quad\|g\|_{H^{j}}\lesssim2^{Jj}\|g\|_{L^{2}},\quad j=1,2,\label{eq:bernsteinJackson}
\end{align}
and additionally we need the uniform bound
\begin{equation}
\Big\|\sum_{|\lambda|\leq J}\psi_{\lambda}^{2}\Big\|_{\infty}\lesssim\text{dim}(V_{J})=2^{J}.\label{eq:Sum of squared base functions}
\end{equation}
It follows from the well known properties of wavelets that (\ref{eq:bernsteinJackson})
and (\ref{eq:Sum of squared base functions}) are satisfied.
\begin{rem}
\label{rem:basis}Since the eigenfunctions of the generator of the
reflected Brownian motion are given by the trigonometric functions,
it seams to be attractive to choose $V_{J}$ as the closure of the
span of the first $2^{J}$ orthogonal trigonometric basis functions,
which however does not fulfill (\ref{eq:bernsteinJackson}). If the
drift and the volatility function satisfy the stronger Hölder regularity
assumption $\|\sigma^{2}\|_{C^{s}},\|b\|_{C^{s-1}}\leq D,$ where
$\|\cdot\|_{C^{s}}$ denotes the Hölder norm, then we can obtain the
same bounds on the mean $L^{2}$ estimation error under a weaker version
of Jackson's inequality, namely
\[
\|(I-\pi_{J})f\|_{L^{2}}\lesssim2^{-Js}\|f\|_{C^{s}}.
\]
This inequality is satisfied for the trigonometric basis. Furthermore
Bernstein's inequality can be easily checked and (\ref{eq:Sum of squared base functions})
is trivially fulfilled. The same applies to the B-spline basis, that
satisfies above conditions with the weakened Jackson inequality (see
\citep{Ciesielski:1963} and \citep{CiesielskiFigiel:1982}).
\end{rem}
After having fixed the basis functions and the corresponding approximation
spaces $V_{J}$, there is a one-to-one correspondence between a linear
operator $A\colon V_{J}\to V_{J}$ on the finite dimensional space
$V_{J}$ and its matrix representation $(A_{\lambda,\lambda'})\in\R^{\dim V_{J}\times\dim V_{J}}$
with $A_{\lambda,\lambda'}:=\langle\psi_{\lambda},A\psi_{\lambda'}\rangle$.
To simplify the notation, we will throughout use $A$ to denote the
operator as well as its representation matrix.

Using the ergodicity of the diffusion $X$ and the independence of
$X$ and $(\Delta_{n})_{n}$, the sequence $(X_{\tau_{n}})_{n}$ is
ergodic, too. The natural estimator for the invariant measure is therefore
the empirical measure 
\[
\mu_{N}=\frac{1}{N+1}\sum_{n=0}^{N}\delta_{X_{\tau_{n}}}.
\]
To regularize $\mu_{N}$, we define the projection estimator 
\begin{align*}
\hat{\mu}_{J}(x) & :=\sum_{|\lambda|\leq J}\langle\psi_{\lambda},\mu_{N}\rangle\psi_{\lambda}(x)\quad\text{with}\quad\langle\psi_{\lambda},\mu_{N}\rangle:=\frac{1}{N+1}\sum_{n=0}^{N}\psi_{\lambda}(X_{\tau_{n}})
\end{align*}
for a projection level $J\in\N$. We proceed similarly to \citet{GobetHoffmannReiss:2004}.
Extending the matrix estimator of the transition semigroup, we introduce
the matrix estimator $\hat{R}_{J}=(\hat{R}_{\lambda,\lambda'})$ of
the action of the operator $R$ from (\ref{eq:R}) on the wavelet
basis with respect to the scalar product $\langle\cdot,\cdot\rangle_{\mu}$:
\[
\hat{R}_{\lambda,\lambda'}:=\frac{1}{2N}\sum_{n=0}^{N-1}\Big(\psi_{\lambda}\left(X_{\tau_{n+1}}\right)\psi_{\lambda'}\left(X_{\tau_{n}}\right)+\psi_{\lambda'}\left(X_{\tau_{n+1}}\right)\psi_{\lambda}\left(X_{\tau_{n}}\right)\Big).
\]
Since the observation times are independent from the diffusion, conditioning
on $\tau_{n}$, we can verify that $\hat{R}_{J}$ is an unbiased estimator
of the action of the operator $R$ on the basis, that is

\[
\E\big[\hat{R}_{\lambda,\lambda'}\big]=\langle\psi_{\lambda},R\psi_{\lambda'}\rangle_{\mu}.
\]
The Gram matrix $G_{J}=(\langle\psi_{\lambda},\psi_{\lambda'}\rangle_{\mu})_{\lambda,\lambda'}\in\R^{\dim V_{J}\times\dim V_{J}}$
is determined by $\langle v,G_{J}v\rangle=\langle v,v\rangle_{\mu}$
for all $v\in V_{J}\setminus\left\{ 0\right\} $. Hence, $G_{J}$
is a restriction of the scalar product $\langle\cdot,\cdot\rangle_{\mu}$
to finite dimensional space $V_{J}$. It can be estimated by $\hat{G}_{J}=(\hat{G}_{\lambda,\lambda'})$
with
\begin{align*}
\hat{G}_{\lambda,\lambda'} & =\frac{1}{N}\Big(\frac{1}{2}\psi_{\lambda}\big(X_{0}\big)\psi_{\lambda'}\big(X_{0}\big)+\sum_{n=1}^{N-1}\psi_{\lambda}\big(X_{\tau_{n}}\big)\psi_{\lambda'}\big(X_{\tau_{n}}\big)+\frac{1}{2}\psi_{\lambda}\big(X_{\tau_{N}}\big)\psi_{\lambda'}\big(X_{\tau_{N}}\big)\Big),
\end{align*}
satisfying

\[
\E\big[\hat{G}_{\lambda,\lambda'}\big]=\langle\psi_{\lambda},\psi_{\lambda'}\rangle_{\mu}=\langle\psi_{\lambda},G_{J}\psi_{\lambda'}\rangle.
\]
Owing to $\langle v,G_{J}v\rangle=\langle v,v\rangle_{\mu}>0$ for
any $v\in V_{J}\setminus\left\{ 0\right\} $, the matrix $G_{J}$
is invertible. By construction $\langle v,\hat{G}_{J}v\rangle$ is
always non-negative and it will be strictly positive whenever the
sample is sufficiently dispersed over all the interval $[0,1]$. By
ergodicity we can expect this to be a high probability event. With
a Neumann series argument we can moreover bound the norm of $\hat{G}_{J}^{-1}$
as stated by the following lemma, which is proven in Section~ \ref{sub:Analysis stoch error}.
\begin{lem}
\label{lem:Set T_0 size}Grant Assumption~\ref{ass:times} and \ref{ass:drift volatility}.
On the event $\T_{1}=\Big\{\|G_{J}-\hat{G}_{J}\|_{L^{2}}\leq\frac{1}{2}\|G_{J}^{-1}\|_{L^{2}}^{-1}\Big\}$
the estimator $\hat{G}_{J}$ is invertible and satisfies $\big\|\hat{G}_{J}{}^{-1}\big\|_{L^{2}}\leq2\|G_{J}^{-1}\|_{L^{2}}$.
Moreover, $\P\left(\Omega\setminus\T_{1}\right)\leq N^{-1}2^{2J}$
holds uniformly over $\Theta_{s}$ and $\Gamma$. 
\end{lem}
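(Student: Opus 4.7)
The invertibility claim is a textbook Neumann-series estimate. On the event $\T_1$ write $\hat G_J = G_J\bigl(I - G_J^{-1}(G_J - \hat G_J)\bigr)$. Since $\|G_J^{-1}(G_J-\hat G_J)\|_{L^2}\le \|G_J^{-1}\|_{L^2}\|G_J-\hat G_J\|_{L^2}\le 1/2$ on $\T_1$, the factor in parentheses is invertible with inverse of norm at most $2$, so $\hat G_J^{-1}$ exists and $\|\hat G_J^{-1}\|_{L^2}\le 2\|G_J^{-1}\|_{L^2}$. Note that $\|G_J^{-1}\|_{L^2}$ is automatically bounded uniformly over $\Theta_s$: since $\langle v,G_Jv\rangle=\|v\|_\mu^2\ge c\|v\|_{L^2}^2$ with $c$ the uniform lower bound on $\mu$, we have $\|G_J^{-1}\|_{L^2}\le c^{-1}$.

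For the probability bound the plan is to dominate the operator norm by the Frobenius norm and apply Markov's inequality: since $\|G_J^{-1}\|_{L^2}^{-1}\ge c$ is bounded below, it suffices to show $\E\bigl[\|G_J-\hat G_J\|_{L^2}^2\bigr]\lesssim N^{-1}2^{2J}$, and then
\[
\P(\Omega\setminus\T_1)\le \P\bigl(\|G_J-\hat G_J\|_{L^2}>c/2\bigr)\lesssim \E\|G_J-\hat G_J\|_{L^2}^2\lesssim N^{-1}2^{2J}.
\]
Using $\|A\|_{L^2}^2\le \|A\|_F^2=\sum_{\lambda,\lambda'}A_{\lambda,\lambda'}^2$ reduces the task to bounding $\sum_{|\lambda|,|\lambda'|\le J}\Var(\hat G_{\lambda,\lambda'})$.

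Each $\hat G_{\lambda,\lambda'}$ is, up to the boundary correction, the empirical mean of $f_{\lambda,\lambda'}(X_{\tau_n})=\psi_\lambda(X_{\tau_n})\psi_{\lambda'}(X_{\tau_n})$. The sequence $(X_{\tau_n})_{n\ge 0}$ is a stationary Markov chain with transition operator $R$, so its covariances decay geometrically: by Lemma~\ref{lem:eigenvaluesR} the second largest eigenvalue $\kappa_1$ of $R$ is bounded away from $1$ uniformly over $\Theta_s$ and $\Gamma$, which gives $|\mathrm{Cov}(f(X_{\tau_0}),f(X_{\tau_k}))|\le \kappa_1^{|k|}\|f\|_{L^2(\mu)}^2$ for any centered $f\in L^2(\mu)$. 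Summing the geometric series yields
\[
\Var(\hat G_{\lambda,\lambda'})\lesssim \frac{1}{N}\|f_{\lambda,\lambda'}\|_{L^2(\mu)}^2\lesssim \frac{1}{N}\int_0^1\psi_\lambda^2(x)\psi_{\lambda'}^2(x)\,dx,
\]
using that $\mu$ is uniformly bounded. Summing over $|\lambda|,|\lambda'|\le J$ and applying~(\ref{eq:Sum of squared base functions}),
\[
\sum_{\lambda,\lambda'}\int\psi_\lambda^2\psi_{\lambda'}^2=\int\Bigl(\sum_{|\lambda|\le J}\psi_\lambda^2\Bigr)^2\lesssim 2^{2J},
\]
so $\E\|G_J-\hat G_J\|_F^2\lesssim N^{-1}2^{2J}$, as required.

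The only delicate point is the uniform covariance bound in the third paragraph: the $(X_{\tau_n})$ are neither independent nor low-frequency equidistant observations, and the mixing constant must be controlled jointly in $(\sigma,b)\in\Theta_s$ and $\gamma\in\Gamma$. This is exactly what Lemma~\ref{lem:eigenvaluesR} provides, since a uniform spectral gap of the self-adjoint operator $R$ on $L^2(\mu)$ translates into a uniform geometric $L^2$-ergodicity constant for the sampled chain. Everything else (Neumann series, Frobenius-versus-operator norm, Markov's inequality, summation via~(\ref{eq:Sum of squared base functions})) is routine.
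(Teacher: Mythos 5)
Your proposal is correct and follows essentially the same route as the paper: a Neumann-series argument for invertibility, the uniform lower bound $\langle v,G_Jv\rangle\gtrsim\|v\|_{L^2}^2$ to control $\|G_J^{-1}\|$, domination of the operator norm by the Hilbert--Schmidt (Frobenius) norm, an entrywise variance bound of order $N^{-1}$ obtained from the geometric correlation decay of the sampled chain (the paper packages this as Lemma~\ref{lem:Variance estimates} and Lemma~\ref{lem:G hat error}, both resting on the uniform spectral gap of $R$ from Lemma~\ref{lem:eigenvaluesR}), summation via (\ref{eq:Sum of squared base functions}), and Chebyshev's inequality. The only cosmetic difference is that you bound the variances entry by entry rather than columnwise via Lemma~\ref{lem:G hat error}.
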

Whenever $\hat{G}_{J}^{-1}$ exists, we can consider $\hat{G}_{J}^{-1}\hat{R}_{J}$.
Since $\hat{R}_{J}$ is symmetric it immediately follows that $\hat{G}_{J}^{-1}\hat{R}_{J}$
is symmetric with respect to the $\hat{G}_{J}$-scalar product. Furthermore,
by the Cauchy-Schwarz inequality and the inequality between geometric
and arithmetic means we obtain for all $v\in V_{J}\setminus\left\{ 0\right\} $
\begin{eqnarray*}
\langle\hat{R}_{J}v,v\rangle & = & \frac{1}{N}\sum_{n=0}^{N-1}v\left(X_{\tau_{n}}\right)v\left(X_{\tau_{n+1}}\right)\\
 & \leq & \frac{1}{N}\Big(\sum_{n=0}^{N-1}v^{2}\big(X_{\tau_{n}}\big)\Big)^{1/2}\Big(\sum_{n=1}^{N}v^{2}\big(X_{\tau_{n}}\big)\Big)^{1/2}\\
 & \leq & \frac{1}{N}\Big(\frac{1}{2}v^{2}\left(X_{0}\right)+\frac{1}{2}v^{2}\left(X_{\tau_{N}}\right)+\sum_{n=1}^{N-1}v^{2}\left(X_{\tau_{n}}\right)\Big)=\langle\hat{G}_{J}v,v\rangle.
\end{eqnarray*}
Consequently, all eigenvalues of the matrix $\hat{G}_{J}^{-1}\hat{R}_{J}$
are real and smaller than one. It is easy to check that $1$ is an
eigenvalue corresponding to the constant function. We define the estimator
$\left(\hat{\kappa}_{J,1},\hat{u}_{J,1}\right)$ of the eigenpair
$\left(\kappa_{1},u_{1}\right)$ as the eigenpair of the matrix $\hat{G}_{J}^{-1}\hat{R}_{J}$
corresponding to the biggest eigenvalue smaller than one. On the exceptional
event that $\hat{G}_{J}$ is not invertible, we set $\hat{\kappa}_{J,1}=0$
and $\hat{u}_{J,1}=1$. Furthermore we choose the estimated eigenfunction
$\hat{u}_{J,1}$ normalized in $L^{2}$. 

Using $\hat{\kappa}_{J,1}$ and the identification equation $\kappa_{1}=\mathcal{L}_{\gamma}(-v_{1})$,
we can estimate $v_{1}$. The canonical estimator for the Laplace
transform of $\gamma$ is the Laplace transform of the empirical measure
of the sampling distances $\Delta_{n}=\tau_{n}-\tau_{n-1},n=1,\dots,N$.
Hence, we define
\[
\hat{\mathcal{L}}(y):=\frac{1}{N}\sum_{n=1}^{N}e^{-y\Delta_{n}},\quad y\in\R_{+}.
\]
Due to the i.i.d. structure of $(\Delta_{n})$, the classical empirical
process theory shows that $\hat{\mathcal{L}}$ estimates $\mathcal{L}_{\gamma}$
uniformly in a neighborhood of $v_{1}$ with the parametric rate $N^{-1/2}$.
Moreover, $\hat{\mathcal{L}}$ is strictly decreasing and continuous,
thus invertible. We define
\begin{equation}
\hat{v}_{J,1}:=-\hat{\L}^{-1}\left(\hat{\kappa}_{J,1}\right)\mathbf{1}_{\{\hat{\kappa}_{J,1}>0\}}.\label{eq:vHat}
\end{equation}

With the above definitions and in view of the identification formulas
(\ref{eq:sigma}) and (\ref{eq:b}) we can define the plug-in estimators
of the diffusion coefficients. In order to ensure integrability of
our estimators, we need to stabilize against large stochastic errors.
Using the prior knowledge that $(\sigma,b)\in\Theta_{s}$, especially
$\|\sigma^{2}\|_{\infty}\le D$ and $\|b\|_{L^{2}}\le D$ for some
$D>0$, we thus define 
\begin{gather}
\hat{\sigma}_{J}^{2}(x)=2\hat{v}_{J,1}\frac{\int_{0}^{x}\hat{u}_{J,1}(y)\hat{\mu}_{J}(y)dy}{\hat{u}_{J,1}'(x)\hat{\mu}_{J}(x)}\wedge D,\label{eq:Sigma est definition}\\
\hat{b}_{J}(x)=\tilde{b}_{J}(x)\I_{\{\|\tilde{b}_{J}\|_{L^{2}}\leq2D\}}\quad\text{for}\quad\tilde{b}_{J}(x):=\frac{\hat{v}_{J,1}\hat{u}_{J,1}(x)}{\hat{u}_{J,1}'(x)}-\frac{\hat{\sigma}_{J}^{2}(x)\hat{u}_{J,1}''(x)}{2\hat{u}_{J,1}'(x)}.\label{eq:drift est definition}
\end{gather}

\section{Minimax convergence rates\label{sec:mainResults}}

Let us now state our first main results, generalizing Theorems 2.4
and 2.5 in \citep{GobetHoffmannReiss:2004}, respectively. Note that
since $u_{1}'(0)=u_{1}'(1)=0$ the function 
\[
[0,1]\ni x\mapsto\frac{2v_{1}\int_{0}^{x}u_{1}(y)\mu(y)dy}{u'_{1}(x)\mu(x)}=\sigma^{2}(x)
\]
is defined in $\{0,1\}$ via continuous extension such that the proposed
estimators $\hat{\sigma}_{J}^{2}$ and $\hat{b}_{J}$ might be unstable
at the boundary. We restrict the $L^{2}$-loss to an interval $[a,b]\subset[0,1]$
for $0<a<b<1$ and refer to \citep[Section 3.3.8]{GobetHoffmannReiss:2004}
for a discussion of the boundary problem. 
\begin{thm}
\label{thm:Upper bounds}Grant Assumptions~\ref{ass:times} and \ref{ass:drift volatility}
for some $s>1$. Let $0<a<b<1$. Choosing $2^{J}\sim N^{1/(2s+3)}$,
we have 
\begin{eqnarray*}
\sup_{(\sigma^{2},b,\gamma)\in\Theta_{s}\times\Gamma}\E\big[\|\hat{\sigma}_{J}^{2}-\sigma^{2}\|_{L^{2}([a,b])}^{2}\big] & \lesssim & N^{-2s/(2s+3)},\\
\sup_{(\sigma^{2},b,\gamma)\in\Theta_{s}\times\Gamma}\E\big[\|\hat{b}_{J}-b\|_{L^{2}([a,b])}^{2}\big] & \lesssim & N^{-2(s-1)/(2s+3)}.
\end{eqnarray*}

\end{thm}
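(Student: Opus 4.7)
The plan is to bound the $L^2$-errors of $\hat\sigma^2_J$ and $\hat b_J$ by propagating through the identification formulas (\ref{eq:sigma}) and (\ref{eq:b}) the errors of the building blocks $\hat\mu_J$, $\hat u_{J,1}$, and $\hat v_{J,1}$. Throughout I would work on the favourable event $\mathcal T_1$ of Lemma~\ref{lem:Set T_0 size}, whose complement contributes a term of order $N^{-1}2^{2J}$ which remains below the target rate for the choice $2^J\sim N^{1/(2s+3)}$; on $\Omega\setminus\mathcal T_1$ the truncations in (\ref{eq:Sigma est definition}) and (\ref{eq:drift est definition}) keep the estimators bounded, so the bad event contributes only $O(\P(\Omega\setminus\mathcal T_1))$.

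The first main step is a uniform stochastic control of the matrix estimators. The chain $(X_{\tau_n})_n$ has $R$ as its one-step transition operator and, by Lemma~\ref{lem:eigenvaluesR}, a uniform spectral gap over $\Theta_s\times\Gamma$, hence is geometrically ergodic. A Bernstein inequality for such chains gives an entry-wise variance of order $N^{-1}$ for $\hat R_J$ and $\hat G_J$; combined with the localisation (\ref{eq:Sum of squared base functions}) of the wavelet basis this yields
\[
\E\|\hat R_J-\pi^{\mu}_J R\pi^{\mu}_J\|_{L^2\to L^2}^2+\E\|\hat G_J-G_J\|_{L^2\to L^2}^2+\E\|\hat\mu_J-\pi_J\mu\|_{L^2}^2\lesssim 2^J/N,
\]
uniformly over $\Theta_s\times\Gamma$. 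For $\hat{\mathcal L}$, i.i.d.\ empirical process theory supplies $\sup_{z\in K}|\hat{\mathcal L}(z)-\mathcal L_\gamma(z)|\lesssim N^{-1/2}$ on any compact $K\subset(0,\infty)$.

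The second main step is a perturbation argument. On $\mathcal T_1$ the matrix $\hat G_J^{-1}\hat R_J$ is self-adjoint with respect to the $\hat G_J$-inner product, which by Lemma~\ref{lem:Set T_0 size} is equivalent to the $L^2$-inner product. Combining a deterministic stability lemma from the appendix with the uniform spectral gap yields
\[
\E\bigl[|\hat\kappa_{J,1}-\kappa_1|^2+\|\hat u_{J,1}-u_1\|_{L^2}^2\bigr]\lesssim 2^J/N+2^{-2Js}.
\]
Bernstein's inequality (\ref{eq:bernsteinJackson}), applied to $\hat u_{J,1}-\pi_J u_1\in V_J$, lifts this to
\[
\E\|\hat u_{J,1}-u_1\|_{H^j}^2\lesssim 2^{J(2j+1)}/N+2^{-2J(s-j)},\qquad j=1,2.
\]
Since $\mathcal L_\gamma$ is strictly decreasing and $|\mathcal L_\gamma'(v_1)|$ is bounded below uniformly in $\gamma\in\Gamma$ (because $\gamma(I)\ge\alpha$ with $I$ compact and $v_1$ is uniformly bounded by Lemma~\ref{lem:eigenvaluesR}), $\hat{\mathcal L}^{-1}$ is locally Lipschitz near $\kappa_1$, whence $\E|\hat v_{J,1}-v_1|^2\lesssim N^{-1}+\E|\hat\kappa_{J,1}-\kappa_1|^2$.

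Finally, plugging these bounds into (\ref{eq:sigma}) and (\ref{eq:b}) and exploiting the uniform lower bounds on $\mu$ and on $u_1'$ away from the boundary \citep[Lemma~6.1]{GobetHoffmannReiss:2004}, the two identification maps are Lipschitz in the relevant norms on a neighbourhood of $(u_1,\mu,v_1)$, giving
\[
\E\|\hat\sigma_J^2-\sigma^2\|_{L^2([a,b])}^2\lesssim 2^{-2Js}+2^{3J}/N,\qquad \E\|\hat b_J-b\|_{L^2([a,b])}^2\lesssim 2^{-2J(s-1)}+2^{5J}/N,
\]
where the variance terms arise from the $H^1$- and $H^2$-errors of the eigenfunction, respectively. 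The choice $2^J\sim N^{1/(2s+3)}$ balances bias against variance in both bounds and yields the claimed rates. The main obstacle will be the perturbation step: $\hat G_J^{-1}\hat R_J$ is not self-adjoint with respect to the Euclidean inner product, so the eigenpair analysis must be carried out in the data-dependent $\hat G_J$-inner product with a simultaneous control of $\hat R_J$ and $\hat G_J$; moreover, obtaining the sharp operator-norm rate $\sqrt{2^J/N}$, rather than the crude Frobenius-norm rate $2^J/\sqrt{N}$, is essential, since only the former matches the minimax rate after the $H^j$ lift.
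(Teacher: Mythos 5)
Your overall architecture matches the paper's: bias via Jackson's inequality and a projection of the eigenvalue problem onto $V_J$, variance via the spectral gap of the chain $(X_{\tau_n})_n$, a perturbation argument for the generalized eigenpair, a separate parametric-rate treatment of $\hat{\mathcal L}$, and a plug-in step using the lower bounds on $\mu$ and $u_1'$ on $[a,b]$. But there is a genuine gap exactly where you yourself flag "the main obstacle". You route the eigenpair perturbation through the operator-norm bound $\E\|\hat R_J-\pi_J^\mu R\pi_J^\mu\|_{L^2\to L^2}^2\lesssim 2^J/N$ and claim it follows from entry-wise variances of order $N^{-1}$ combined with (\ref{eq:Sum of squared base functions}). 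That derivation does not work: summing $2^{2J}$ entry variances of order $N^{-1}$ gives only the Hilbert--Schmidt bound $2^{2J}/N$, i.e.\ the "crude Frobenius rate" you correctly identify as too weak, and passing to the sharp operator norm for these dependent, Markov-chain-driven matrices would require genuinely different tools (matrix concentration or chaining over the unit sphere of $V_J$), which you do not supply. The paper never proves such an operator-norm bound. Instead it uses an a posteriori residual formulation (Theorem~\ref{thm:GHEP}): the eigenpair error is controlled by $\|r\|_{L^2}$ with $r=(\hat R_J-R_J)u_{J,1}+\kappa_{J,1}(G_J-\hat G_J)u_{J,1}$, i.e.\ the matrix differences applied to the single \emph{deterministic} vector $u_{J,1}$. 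For a fixed vector, Lemmas~\ref{lem:G hat error} and \ref{lem: Ralpha error} do give the sharp rate $\E\|(\hat R_J-R_J)v\|_{L^2}^2\lesssim N^{-1}2^J\|v\|_{L^2}^2$ via (\ref{eq:Sum of squared base functions}), and this is what makes the final rate $2^{3J}/N+2^{-2Js}$ attainable.

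A second, related omission: the a posteriori bound only asserts that \emph{some} eigenpair of the perturbed problem is close to $(\kappa_{J,1},u_{J,1})$; it does not compare ordered eigenpairs. Your estimator is defined as the eigenpair corresponding to the largest eigenvalue below one, so you must show that the nearby eigenpair is indeed the ordered one and that its isolation distance stays above half the spectral gap. The paper does this with the absolute Weyl theorem (Theorem~\ref{thm:Weyl GHEP}), and the resulting exceptional event has probability of order $N^{-1}2^{3J}$ --- which for $2^J\sim N^{1/(2s+3)}$ is \emph{exactly} the target rate $N^{-2s/(2s+3)}$, not comfortably below it as your $N^{-1}2^{2J}$ accounting for $\mathcal T_1$ suggests. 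So the bad-event bookkeeping is tighter than you assume and cannot be waved through. The remaining ingredients of your sketch (the $H^1$/$H^2$ lift via Bernstein's inequality, the Lipschitz inversion of $\hat{\mathcal L}$ near $\kappa_1$, and the plug-in step with the floored denominator $\hat u_{J,1}'\vee c'_{a,b}$ as in Lemma~\ref{lem:sigmaTilde}) are consistent with the paper's proof.
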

The risk of $\hat{\sigma}^{2}$ and $\hat{b}$ decomposes into the
errors for estimating the invariant density $\mu$ and the eigenpair
and $(v_{1},u_{1})$ of the infinitesimal generator $L$ of the diffusion.
In view of formula (\ref{eq:mu}) the invariant density inherits Sobolev
regularity of degree $s$ from the diffusion coefficients. Together
with the ergodicity and the spectral gap $\mu$ can be estimated with
the rate $\E[\|\hat{\mu}_{J}-\mu\|_{L^{2}}]\lesssim N^{-\frac{s}{2s+1}}$
if we choose $2^{J}\sim N^{-1/(2s+1)}$, cf. Proposition~\ref{prop:Invariant measure bound}.
Due to $\mathcal{L}_{\gamma}(-v_{1})=\kappa_{1}$ estimating $v_{1}$
reduces to estimate the eigenvalue $\kappa_{1}$ of the operator $R$
and the inverse of the Laplace transform $\mathcal{L}_{\gamma}$ in
a neighborhood of $\kappa_{1}$. The latter estimation problem can
be solved with standard empirical process results yielding the parametric
rate $N^{-1/2}$ for $\hat{\text{\ensuremath{\mathcal{L}}}}$, see
Lemma~\ref{lem:vHat}. 

The analysis of the estimation error of the eigenpair $(\kappa_{1},u_{1})$
of the generalized transition operator $R$ is the most challenging
ingredient of our proofs. We first restrict the eigenvalue problem
to the finite dimensional space $V_{J}$, that is we find $(\kappa_{J,1},u_{J,1})\in\R_{+}\times V_{J}$
such that 
\begin{equation}
\langle v,Ru_{J,1}\rangle_{\mu}=\kappa_{J,1}\langle v,u_{J,1}\rangle_{\mu}\quad\text{ for all }v\in V_{J}.\label{eq:Proof sketch finite EP}
\end{equation}
As shown in Theorem~\ref{thm:EP for self-adjoint compact positive-definite operator}
the resulting approximation error $\|u_{1}-u_{J,1}\|_{L^{2}(\mu)}+|\kappa_{1}-\kappa_{J,1}|$
is controlled by the spectral gap of the operator $R$ and the smoothness
of the eigenfunction (of degree $s+1$) achieving the order of magnitude
$2^{-J(s+1)}$. In the second step we approximate the finite dimensional
problem (\ref{eq:Proof sketch finite EP}) by a generalized symmetric
eigenvalue problem for the random matrices $\hat{R}_{J}$ and $\hat{G}_{J}$.
We use classical a posteriori error bounds to show that the approximation
error is controlled by the norm of the so called residual vector $r=(\hat{R}_{J}-\kappa_{J,1}\hat{G}_{J})u_{J,1}$,
cf. Theorem~\ref{thm:GHEP}. $\|r\|_{L^{2}}$ can be bounded by the
matrix approximation errors $\|(\hat{R}_{J}-R_{J})u_{J,1}\|_{L^{2}}$
and $\|(\hat{G}_{J}-G_{J})u_{J,1}\|_{L^{2}}$ that tend to zero by
the mixing property of the Markov chain $(X_{\tau_{n}})_{n}$. A delicate
point is that the a posteriori technique gives an existence statement,
but does not bound the error between ordered eigenpairs. We overcome
this difficulty using the absolute Weyl theorem for generalized symmetric
eigenvalue problems, see \citep{Nakatsukasa:2010}. We conclude that
$(\kappa_{1},u_{1})$ can be estimated with the rate $N^{-(s+1)/(2s+3)}$.

Because the volatility estimator relies on the first derivative of
the eigenfunction the statistical problem is ill-posed of degree one,
deteriorating the rate to $N^{-s/(2s+3)}$. For the drift estimator
we need the second derivative, adding a degree of ill-posedness. At
the same time the regularity of $b$ is smaller such that the rate
becomes $N^{-(s-1)/(2s+3)}=N^{-(s-1)/(2(s-1)+5)}$. Compared to \citet{GobetHoffmannReiss:2004},
the same rates can thus be achieved with random sampling times (with
unknown sampling distribution) than with equidistant low frequent
observations. In fact, the convergence rates are optimal in the minmax
sense: 
\begin{thm}
\label{thm:Lower bounds}Grant Assumption~\ref{ass:times} for an
arbitrary $\gamma\in\Gamma$ admitting a bounded Lebesgue density
at the origin. Grant Assumption~\ref{ass:drift volatility} for some
$s>1$. For $0<a<b<1$ it holds
\begin{eqnarray*}
\inf_{\bar{\sigma}}\sup_{(\sigma^{2},b)\in\Theta_{s}}\E\big[\|\bar{\sigma}^{2}-\sigma^{2}\|_{L^{2}([a,b])}^{2}\big] & \gtrsim & N^{-2s/(2s+3)},\\
\inf_{\bar{b}}\sup_{(\sigma^{2},b)\in\Theta_{s}}\E\big[\|\bar{b}-b\|_{L^{2}([a,b])}^{2}\big] & \gtrsim & N^{-2(s-1)/(2s+3)},
\end{eqnarray*}
where the infimum is taken over all estimators, i.e. measurable functions,
$\bar{\sigma}$ and $\bar{b}$, respectively.
\end{thm}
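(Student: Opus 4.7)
The plan is to establish both bounds by Assouad's lemma, following the blueprint of~\citep{GobetHoffmannReiss:2004} with the crucial new step of averaging the per-step Kullback--Leibler divergence against the sampling distribution $\gamma$. I describe the volatility case and indicate the modifications for the drift at the end. Take reflected Brownian motion as reference, $(\sigma_0^2,b_0)=(1,0)$, whose invariant measure is uniform and whose generator has an explicit trigonometric eigenbasis. Fix a smooth bump $\psi$ supported in $(a,b)$ with $\|\psi\|_{L^2}=1$ and, for $J\in\mathbb{N}$, set $\psi_{J,k}(x):=2^{J/2}\psi(2^Jx-k)$ on an index set $\mathcal{K}$ of cardinality $m\sim 2^J$ chosen so that the supports of $(\psi_{J,k})_{k\in\mathcal{K}}$ are pairwise disjoint and contained in $[a,b]$. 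The Assouad family
$$\sigma_d^2:=1+\epsilon\sum_{k\in\mathcal{K}}d_k\,\psi_{J,k},\qquad d\in\{0,1\}^{\mathcal{K}},$$
with $2^J\sim N^{1/(2s+3)}$ and $\epsilon\sim 2^{-J(s+1/2)}$, satisfies $(\sigma_d,0)\in\Theta_s$ uniformly in $d$ (by Sobolev scaling and disjoint supports), and for $d$, $d^{(k)}$ differing in bit $k$ the per-bit $L^2$-separation is $\epsilon^2$. Assouad's inequality then gives
$$\inf_{\bar\sigma^2}\sup_d\mathbb{E}\bigl[\|\bar\sigma^2-\sigma_d^2\|_{L^2([a,b])}^2\bigr]\gtrsim m\,\epsilon^2\Bigl(1-\max_{k}\bigl\|\mathbb{P}_{\sigma_d,0,\gamma}^{(N)}-\mathbb{P}_{\sigma_{d^{(k)}},0,\gamma}^{(N)}\bigr\|_{\mathrm{TV}}\Bigr),$$
and $m\epsilon^2\sim 2^{-2Js}\sim N^{-2s/(2s+3)}$ is exactly the claimed rate, provided the maximal total variation is bounded away from~$1$.

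Since $\gamma$ is common to both hypotheses and the sampling times are independent of the diffusion, likelihoods factor and, started from stationarity,
$$\mathrm{KL}\bigl(\mathbb{P}_{\sigma_d,0,\gamma}^{(N)}\|\mathbb{P}_{\sigma_{d^{(k)}},0,\gamma}^{(N)}\bigr)\lesssim 1+N\int_0^\infty K_k(\Delta)\,\gamma(d\Delta),$$
where $K_k(\Delta):=\int\mu_d(x)\,\mathrm{KL}\bigl(p^d_\Delta(x,\cdot)\|p^{d^{(k)}}_\Delta(x,\cdot)\bigr)\,dx$ is the per-step stationary KL between the transition densities induced by the single-bump perturbation $\epsilon\psi_{J,k}$. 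The central step is to show $\int K_k\,d\gamma\lesssim N^{-1}$. Expanding $p^d_\Delta-p^{d^{(k)}}_\Delta$ via the Duhamel formula and the spectral decomposition of the reference semigroup $e^{\Delta L_0}$, the rapid spatial oscillations of $\psi_{J,k}$ are damped mode-by-mode, yielding for $\Delta\gtrsim 2^{-2J}$ the bound $K_k(\Delta)\lesssim\epsilon^2\cdot 2^{-2J}$; this is the parabolic smoothing that underlies the degree-$1$ ill-posedness of the identification formula~\eqref{eq:sigma}. In the short-time regime $\Delta\lesssim 2^{-2J}$ the local Gaussian approximation only gives the weaker $K_k(\Delta)\lesssim\epsilon^2$, and here the hypothesis on $\gamma$ is used decisively: a bounded Lebesgue density at the origin forces $\gamma([0,2^{-2J}])\lesssim 2^{-2J}$, so this regime contributes at most $\epsilon^2\cdot 2^{-2J}$ to the $\gamma$-average. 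Combining both regimes with $\epsilon\sim 2^{-J(s+1/2)}$ and $2^J\sim N^{1/(2s+3)}$ gives $\int K_k\,d\gamma\lesssim\epsilon^2\cdot 2^{-2J}\sim N^{-1}$, so the pairwise KL is bounded and Assouad's inequality yields the stated lower rate.

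The drift bound is obtained by the same pattern, perturbing $b_d=\epsilon\sum d_k\psi_{J,k}$ with $\epsilon\sim 2^{-J(s-1/2)}$ enforcing the $H^{s-1}$-constraint while keeping $2^J\sim N^{1/(2s+3)}$. Since~\eqref{eq:b} involves $u_1''$ rather than $u_1'$, the corresponding inverse problem is degree-$2$ ill-posed and the refined KL bound $K_k(\Delta)\lesssim\epsilon^2\cdot 2^{-4J}$ must be established; the matching scalings then yield $m\epsilon^2\sim 2^{-2J(s-1)}\sim N^{-2(s-1)/(2s+3)}$. The main obstacle throughout is the extraction of the sharp $2^{-2J}$ (respectively $2^{-4J}$) smoothing gain in the transition-density KL: a naive Girsanov-type bound $K_k(\Delta)\lesssim\epsilon^2$ would only produce the faster density-estimation rate $N^{-2s/(2s+1)}$, incompatible with the tight upper bound in Theorem~\ref{thm:Upper bounds}. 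Obtaining the correct decay requires a careful mode-by-mode expansion of the transition operator in the reference spectral basis, combined with the $\gamma$-integration that relies solely on the weak short-time tail assumption of Assumption~\ref{ass:times} strengthened by the bounded-density condition at the origin.
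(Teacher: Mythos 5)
There is a genuine gap, and it lies in your choice of alternatives. Perturbing only the volatility, $\sigma_d^2=1+\epsilon\sum_k d_k\psi_{J,k}$ with $b\equiv 0$, changes the invariant density, since by (\ref{eq:mu}) with $b=0$ one has $\mu_d\propto\sigma_d^{-2}$; two alternatives differing in bit $k$ therefore satisfy $\|\mu_d-\mu_{d^{(k)}}\|_{L^2}\sim\epsilon\sim N^{-(2s+1)/(2(2s+3))}\gg N^{-1/2}$. Every marginal $X_{\tau_n}$ is distributed according to $\mu_d$, so the statistic $\langle\psi_{J,k},\mu_N\rangle$ has mean separation of order $\epsilon$ under the two hypotheses and standard deviation of order $N^{-1/2}$ by Lemma~\ref{lem:Variance estimates}; it consistently distinguishes them, the total variation term in Assouad's bound tends to one, and the bound degenerates. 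The same defect surfaces in your KL analysis: the smoothing bound $K_k(\Delta)\lesssim\epsilon^2 2^{-2J}$ is false for $\Delta$ of order one, because $p^d_\Delta(x,\cdot)-p^{d^{(k)}}_\Delta(x,\cdot)$ contains the non-decaying zero-mode $\mu_d-\mu_{d^{(k)}}$, which contributes $\epsilon^2$ to $K_k(\Delta)$, and Assumption~\ref{ass:times} forces $\gamma$ to put mass at least $\alpha$ on a compact $I\subset(0,\infty)$. The parabolic smoothing gain you invoke applies only to the component of the perturbation orthogonal to constants, i.e.\ only if the invariant measure is held fixed. Your separate drift family $b_d=\epsilon\sum_k d_k\psi_{J,k}$ with $\sigma\equiv 1$ suffers from the same problem, since then $\mu_d\propto\exp(2\int_0^{\cdot}b_d)$.

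This is exactly why the paper (following Gobet--Hoffmann--Rei\ss) couples the two coefficients: with $S_{\epsilon}=(2+\delta\sum_k\epsilon_k\psi_{jk})^{-1}$, $\sigma_\epsilon^2=2S_\epsilon$ and $b_\epsilon=S'_\epsilon$, every alternative has Lebesgue invariant measure, and one family delivers both rates simultaneously because $\|2S_{\epsilon'}-2S_\epsilon\|_{L^2}\gtrsim\delta$ while $\|S'_{\epsilon'}-S'_\epsilon\|_{L^2}\gtrsim2^{j}\delta$ for the same Kullback--Leibler divergence (this replaces your unproven, and for a pure drift perturbation doubtful, $2^{-4J}$ gain). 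The divergence is then bounded by $N\|r_{\epsilon'}-r_\epsilon\|_{L^2([0,1]^2)}^2$, i.e.\ by the Hilbert--Schmidt distance of the operators $R^{\epsilon}$ restricted to mean-zero functions, and the smoothing gain is extracted not by a mode-by-mode Duhamel expansion but by functional calculus: $R=\mathcal{L}_\gamma(-L)$, the map $z\mapsto\mathcal{L}_\gamma(-z^{-1})$ is uniformly Lipschitz on $(-\infty,0)$ --- this Lipschitz lemma is where the bounded Lebesgue density of $\gamma$ near the origin is actually used, not to control $\gamma([0,2^{-2J}])$ --- and $\|(L_{\epsilon}|_V)^{-1}-(L_{\epsilon'}|_V)^{-1}\|_{HS}\lesssim\delta 2^{-j}$ is imported from the low-frequency case. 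If you want to salvage your outline, you must first modify the alternatives so that the invariant measure is common to all of them; the rest of your Assouad calibration is consistent with the paper's.
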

The proof of the lower bounds for observations sampled at random times
follows the same strategy as for low frequency observations in \citep{GobetHoffmannReiss:2004}.
Constructing alternatives that admit the same invariant measure, proving
the lower bound is reduced to a testing problem by Assouad's lemma,
see \citet[Sect. 2.7.2]{tsybakov:2009}. The Kullback-Leibler distance
between the distributions of two alternatives can then be bounded
in terms of the $L^{2}-$distance between the kernels of the corresponding
operators $R$ from (\ref{eq:R}), which is finally accomplished using
Hilbert-Schmidt norm estimates and the explicit form of the inverse
of the generator.

\section{Adaptive estimation\label{sec:adaptive}}

The optimal choice of the projection level crucially depends on the
unknown smoothness $s$. In this section, we construct a completely
data driven estimation procedure adapting to the Sobolev regularity
of $\sigma^{2}$ and $b$. We focus on the volatility estimator, noting
that the methodology should extend to the drift estimation without
additional theoretical problems. We adopt the general adaption principle
by \citet{Lepski:1990}. 

The aim is to chose the optimal projection level from the set
\[
\mathcal{J}_{N}:=[J_{min},J_{max}]\cap\N\quad\mbox{with}\quad2^{J_{min}}\sim\log N,\quad2^{J_{max}}\sim{\textstyle \frac{N}{(\log N)^{2}\log\log N}}
\]
For any $J\in\mathcal{J}_{N}$ we define 
\begin{equation}
s_{J}^{2}:=\Lambda^{2}2^{3J}\frac{\log\log N}{N}\label{eq:radius}
\end{equation}
for some appropriate constant $\Lambda>0$ depending on $d,D$ as
well as $I,\alpha$ (but not on $s$) from the Assumptions \ref{ass:times}
and \ref{ass:drift volatility}. The quantity $s_{J}$ is an upper
bound for the stochastic error of $\hat{\sigma}_{J}^{2}$, cf. Corollary~\ref{cor:concentration}.
The adaptive estimator is defined by
\[
\tilde{\sigma}^{2}:=\hat{\sigma}_{\hat{J}}^{2}\quad\mbox{with}\quad\hat{J}:=\min\big\{ J\in\mathcal{J}_{N}:\forall_{K\ge J,K\in\mathcal{J}_{N}}\|\hat{\sigma}_{K}^{2}-\hat{\sigma}_{J}^{2}\|_{L^{2}([a,b])}\le s_{K}\big\}.
\]
Heuristically, $\hat{J}$ is the smallest projection level for which
the stochastic error still dominates the bias. 

Our main result for the adaptive estimation shows that the estimator
$\tilde{\sigma}^{2}$ achieves the optimal convergence rate up to
an additional $\log\log N$ factor.
\begin{thm}
\label{thm:adaptiveEstimation}Grant Assumptions~\ref{ass:times}
and define $\Gamma_{0}:=\{\gamma\in\Gamma:\mathbb{E}_{\gamma}[\tau^{-1/2}]\le D\}$.
Let Assumption~\ref{ass:drift volatility} be fulfilled for some
$s>5/2$. Let $0<a<b<1$. Then there exists for every $\epsilon>0$
some $C>0$ such that, for $N$ sufficiently large, we have 
\begin{align*}
\sup_{\begin{array}[t]{c}
(\sigma,b,\gamma)\in\Theta_{s}\times\Gamma_{0}\end{array}}\P\Big(\|\tilde{\sigma}^{2}-\sigma^{2}\|_{L^{2}([a,b])}^{2}>C\Big(\frac{\log\log N}{N}\Big)^{2s/(2s+3)}\Big) & <\epsilon.
\end{align*}

\end{thm}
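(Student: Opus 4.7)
The plan is to apply Lepski's standard comparison scheme, using the oracle projection level $J^{\ast}=J^{\ast}(s)$ that balances bias and stochastic error for the unknown smoothness $s$. In view of (\ref{eq:radius}) and the deterministic bias decay $\|\mathbb{E}[\hat{\sigma}_{J}^{2}]-\sigma^{2}\|_{L^{2}([a,b])}\lesssim2^{-Js}$ implicit in the proof of Theorem~\ref{thm:Upper bounds}, the correct balance is at $2^{J^{\ast}}\sim(N/\log\log N)^{1/(2s+3)}$, which produces the announced rate and, together with the range $[J_{min},J_{max}]$ and the assumption $s>5/2$, lies in $\mathcal{J}_{N}$ for $N$ large enough.

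First I would construct a single event $\Omega_{N}$ with $\P(\Omega_{N}^{c})<\epsilon$ on which
\[
\|\hat{\sigma}_{J}^{2}-\sigma^{2}\|_{L^{2}([a,b])}\le C_{1}2^{-Js}+\tfrac{1}{4}s_{J}\qquad\text{uniformly in }J\in\mathcal{J}_{N}.
\]
The deterministic part comes from the approximation properties of $V_{J}$ and the perturbation bounds for the generalized eigenvalue problem (\ref{eq:Proof sketch finite EP}); the stochastic part is precisely the uniform deviation statement of Corollary~\ref{cor:concentration}. A union bound over $\mathcal{J}_{N}$ (cardinality $O(\log N)$), combined with the choice of $\Lambda$ in (\ref{eq:radius}) large enough depending on $d,D,I,\alpha$, produces such an $\Omega_{N}$; the integrability condition $\mathbb{E}_{\gamma}[\tau^{-1/2}]\le D$ defining $\Gamma_{0}$ enters here to make the concentration constants for $\hat{G}_{J}^{-1}$ and $\hat{\mathcal{L}}^{-1}$ uniform in $\gamma$.

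The Lepski argument then splits into two cases on $\Omega_{N}$. If $\hat{J}\le J^{\ast}$, the defining property of $\hat{J}$ applied with $K=J^{\ast}$ yields $\|\hat{\sigma}_{\hat{J}}^{2}-\hat{\sigma}_{J^{\ast}}^{2}\|_{L^{2}([a,b])}\le s_{J^{\ast}}$, and the triangle inequality together with the display above gives a bound of order $s_{J^{\ast}}+2^{-J^{\ast}s}$, which matches the target rate. If $\hat{J}>J^{\ast}$, the minimality of $\hat{J}$ produces some $K\ge J^{\ast}$ with $K\in\mathcal{J}_{N}$ and $\|\hat{\sigma}_{K}^{2}-\hat{\sigma}_{J^{\ast}}^{2}\|_{L^{2}([a,b])}>s_{K}$, whereas the triangle inequality combined with the monotonicity $s_{J^{\ast}}\le s_{K}$ and the defining inequality $C_{1}2^{-J^{\ast}s}\le\tfrac{1}{4}s_{J^{\ast}}$ of $J^{\ast}$ gives the reverse bound $\le s_{K}$. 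This contradiction forces $\{\hat{J}>J^{\ast}\}\cap\Omega_{N}=\emptyset$.

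The main obstacle, and the only ingredient beyond routine Lepski bookkeeping, is Corollary~\ref{cor:concentration}: transferring Bernstein-type exponential inequalities for the geometrically ergodic chain $(X_{\tau_{n}})_{n}$---applied to the matrix statistics $\hat{R}_{J},\hat{G}_{J}$, the empirical measure $\hat{\mu}_{J}$, and the empirical Laplace transform $\hat{\mathcal{L}}$---through the non-linear plug-in formula (\ref{eq:Sigma est definition}), while tracking the $2^{3J}$ amplification produced by the ill-posedness of degree one (differentiation of $\hat{u}_{J,1}$ and inversion of $\hat{G}_{J}$). The $\log\log N$ factor in (\ref{eq:radius}) is the customary price of the union bound over the $O(\log N)$ candidate levels in $\mathcal{J}_{N}$, and accounts for the logarithmic loss in the adaptive rate.
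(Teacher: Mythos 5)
Your proposal is correct and follows essentially the same route as the paper: the oracle level $2^{J^{*}}\sim(N/\log\log N)^{1/(2s+3)}$ defined by balancing the deterministic term against $s_{J}/4$, a high-probability event obtained from Corollary~\ref{cor:concentration} via a union bound over the $O(\log N)$ levels in $\mathcal{J}_{N}$, the conclusion $\hat{J}\le J^{*}$ on that event (which the paper verifies directly rather than by contradiction), and the final triangle inequality. The only cosmetic difference is that the paper's deterministic term $D_{J}$ is an approximation error from Jackson's inequality and the projected eigenvalue problem rather than literally $\|\E[\hat{\sigma}_{J}^{2}]-\sigma^{2}\|$, and the condition $\mathbb{E}_{\gamma}[\tau^{-1/2}]\le D$ enters to bound the transition density of the time-changed chain for the Markov-chain concentration inequality, not for $\hat{G}_{J}^{-1}$ or $\hat{\mathcal{L}}^{-1}$.
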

The proof of this theorem is postponed to Section~\ref{sec:proofAdaptiv}.
It relies on a concentration inequality for the Markov chain $(X_{\tau_{n}})_{n\ge0}$,
see Proposition~\ref{prop:concentration} as well as \citet[Section 3]{NicklSohl:2015}.
For the latter we need the additional assumption on $\gamma$ allowing
for a uniform bound on the transition density of the time-changed
diffusion process. Up to the concentration result, the proof relies
on the standard arguments for the Lepski method.

\section{Numerical example\label{sec:sim}}

In this section, we present numerical results for the volatility estimation.
Throughout the chapter, we consider a diffusion process $X$ with
linear mean reverting drift $b(x)=0.2-0.4x$, quadratic squared volatility
function $\sigma^{2}(x)=0.4-(x-0.5)^{2}$ and two reflecting barriers
at 0 and 1. The sample paths were generated using Euler-Maruyama scheme
with time step size 0.001 and reflection after each step.

For $\Delta=0.25$ we compare the estimation error for four different
sampling distributions of quite different shapes: the case of equidistant
observations with frequency $\Delta^{-1}$, the uniform distribution
on the interval $[0,2\Delta]$, the symmetric Beta$(0.2,0.2)$ distribution
rescaled to the interval $[0,2\Delta]$ and finally, the exponential
distribution with intensity $\Delta^{-1}$. Note that all considered
distributions have mean $\Delta$, Uniform and Beta distribution have
the same compact support $[0,2\Delta]$ and together with exponential
distribution they allow for arbitrary small sampling distances. Figure
\ref{fig:Sample path fragment} depicts a fragment of a simulated
trajectory of the diffusion together with the observations from different
sampling schemes. 
\begin{figure}[tp]
\centering{}\includegraphics[width=0.95\textwidth]{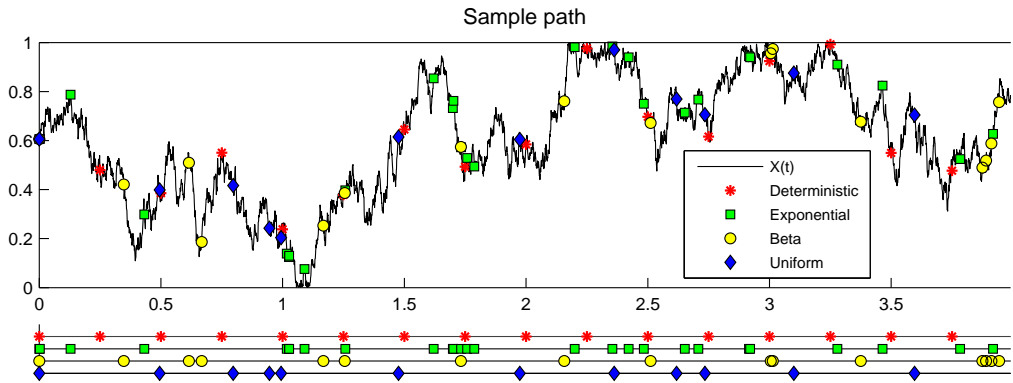}\protect\caption{\label{fig:Sample path fragment}Sample path of the process $X$ for
$0\leq t\leq4$ with marked observations from different sampling distributions.}
\end{figure}

To construct the approximation spaces, we used the Fourier orthogonal
cosines basis i.e.
\[
V_{J}=\overline{\operatorname{span}}\{\sqrt{2}\cos(j\pi x):0\leq j\leq J\},
\]
cf. Remark~\ref{rem:basis}. We compare an oracle choice of the projection
level with the adpative estimator. As target interval we choose $[0.1,0.9].$

\begin{table}[tp]
\begin{centering}
\begin{tabular}{|l|c|c|c|c|c|c|}
\cline{2-7} 
\multicolumn{1}{l|}{} & \multicolumn{3}{c|}{Oracle projection level} & \multicolumn{3}{c|}{Adaptive estimator}\tabularnewline
\hline 
\diagbox{Distribution}{Sample Size} & 4 000 & 12 000 &  20 000 & 4 000 & 12 000 & 20 000\tabularnewline
\hline 
\hline 
Deterministic & 0.0233 & 0.0155 & 0.0123 & 0.0318 & 0.0214 & 0.0130\tabularnewline
\hline 
Uniform & 0.0258 & 0.0168 & 0.0134 & 0.0341 & 0.0221 & 0.0139\tabularnewline
\hline 
Exponential & 0.0282 & 0.0177 & 0.0141 & 0.0362 & 0.0231 & 0.0148\tabularnewline
\hline 
Beta & 0.0296 & 0.0211 & 0.0179 & 0.0432 & 0.0255 & 0.0178\tabularnewline
\hline 
\end{tabular}
\par\end{centering}

\protect\caption{\label{tab:RMSE table} Root mean integrated squared error for volatility
estimation on $[0.1,0.9]$ based on 1000 Monte Carlo iterations. }
\end{table}
In Table \ref{tab:RMSE table} we compare the oracle and adaptive
root mean integrated squared error (RMISE) for volatility estimation
on the interval $[0.1,0.9],$ obtained by a Monte Carlo simulation
with 1000 iterations. The oracle projection level $J$ is stable with
respect to the sampling distribution and surprisingly small, taking
values 2 for N = 4~000 and 4 for N = 12~000 and N = 20~000 across
all distributions, with the exception of Beta with sample size N =
12~000, when it equals 2. For the adaptive estimation we chose the
constant $\Lambda$ in (\ref{eq:radius}) equal to 0.01. 

Relative to $\|\sigma^{2}\|_{L^{2}([0.1,0.9])}\approx0.31$ the error
of the oracle decreases from approximately 10\% for sample size N
= 4~000 to 5\% for N = 20~000. In particular for large sample sized
the error of the adaptive procedure is fairly close to the oracle
error. The errors are quite stable across sampling distributions as
the estimator, where the deterministic sampling allows for the smallest
error and the Beta distribution generates the largest errors. The
latter is not surprising because the Beta distribution is chosen in
a way that yields a strong clustering of the observations.

For 20 independent paths and sample size N = 20 000 the resulting
adaptive volatility estimators are shown in Figure \ref{fig:Volatility flows}.
While the estimators behave nicely in the interior of the interval,
the boundary problem outside the interval $[0.1,0.9]$ is clearly
visible. Again we see that the estimation for the Beta sampling distribution
is the worst.

\begin{figure}[tp]
\centering{}\includegraphics[width=0.95\columnwidth]{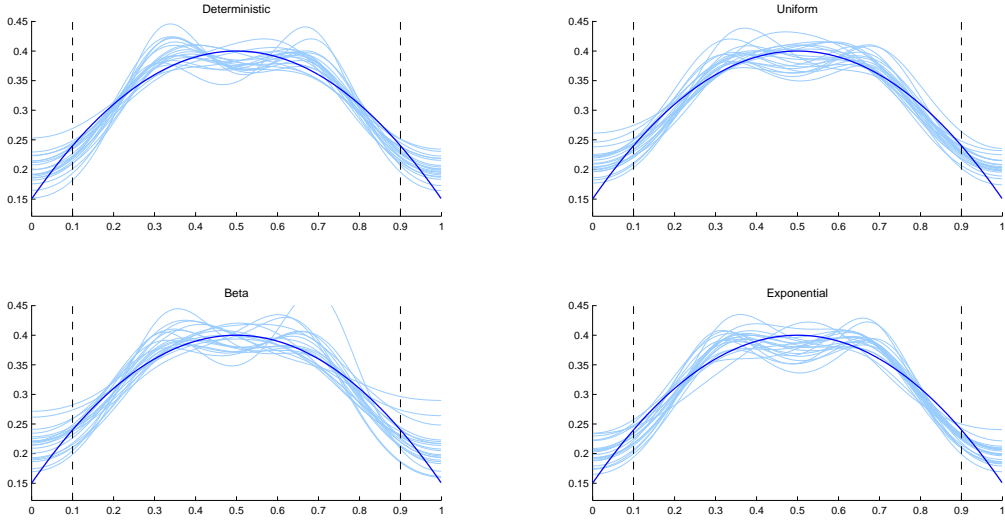}\protect\caption{\label{fig:Volatility flows}Estimated volatility functions using
adapted estimator for 20 independent trajectories of the diffusion
and four different sampling distributions with sample size N = 20
000.}
\end{figure}

In the misspecified case where the randomness of the observation times
is ignored, the RMISE of the low-frequency estimator designed for
equidistant observations with $\Delta$ set to the average observation
distance is four times larger than the error of our method in our
simulations.

\section{Proofs of the upper bounds\label{sec:Proofs}}

Throughout we take Assumptions~\ref{ass:times} and \ref{ass:drift volatility}
for granted.

\subsection{Spectral properties of the generalized transition operator $R$\label{sub:Spectral properties of R}}

Recall that $u_{1}$ is the eigenfunction corresponding to the biggest
negative eigenvalue $v_{1}$ of the generator $L$, normalized in
$L^{2}([0,1])$. By \citep[Proposition 6.5]{GobetHoffmannReiss:2004}
$u_{1}$ can be chosen to be increasing and for any $0<a<b<1$ there
exists a positive constant $c_{a,b}>0$ such that
\begin{equation}
\inf_{(\sigma,b)\in\Theta_{s}}\inf_{x\in[a,b]}u_{1}'(x)>c_{a,b}.\label{eq:lower bound for derivative of eigenfunction}
\end{equation}
By Lemma 6.1 in \citep{GobetHoffmannReiss:2004} the family of generators
$\left\{ L_{\sigma,b}:(\sigma,b)\in\Theta_{s}\right\} $ has a uniform
spectral gap on $\Theta_{s}$ meaning that there is a constant $s_{0}>0$
such that

\begin{equation}
\inf_{(\sigma,b)\in\Theta_{s}}\inf_{i\neq1}|v_{i}-v_{1}|=\inf_{(\sigma,b)\in\Theta_{s}}\left\{ \left|v_{1}\right|,\left|v_{2}-v_{1}\right|\right\} \geq s_{0}.\label{eq:spectral gap}
\end{equation}
Moreover the eigenvalues $v_{k}$ satisfy uniformly on $\Theta_{s}$
\begin{equation}
C_{1}k^{2}\leq-v_{k}\leq C_{2}k^{2},\label{eq:eigenvalue bounds}
\end{equation}
for constants $0<C_{1}<C_{2},$ while corresponding eigenfunctions
$u_{k}$ belong to the Sobolev space $H^{s+1}$ fulfilling

\begin{equation}
\|u_{k}\|_{H^{s+1}}\lesssim(1\vee|v_{k}|)^{\left\lceil s\right\rceil }.\label{eq:eigenfunctions bounds}
\end{equation}
As announced in Lemma (\ref{lem:eigenvaluesR}) these bounds transfer
uniformly to the operator $R$.
\begin{proof}[Proof of Lemma \ref{lem:eigenvaluesR}]
 For convenience we define $m:=\min I>0$ and $M:=\max I$. By the
definition of $R$ and the uniform bounds on the eigenvalues $v_{k}$
of $L$ in (\ref{eq:eigenvalue bounds}), we have
\[
\kappa_{k}=\L_{\gamma}(-v_{k})=\int_{0}^{\infty}e^{tv_{k}}\gamma(dt)\geq\int_{0}^{\infty}e^{-tC_{2}k^{2}}\gamma(dt)\geq\alpha e^{-MC_{2}k^{2}}\quad\text{for }k\geq1.
\]
The spectral gap of the operator $R$ equals $\min\left\{ 1-\kappa_{1},\kappa_{1}-\kappa_{2}\right\} $.
Due to (\ref{eq:spectral gap}), we have
\begin{eqnarray*}
\kappa_{1}-\kappa_{2} & = & \int_{0}^{\infty}\big(e^{tv_{1}}-e^{tv_{2}}\big)\gamma(dt)=\int_{0}^{\infty}e^{tv_{2}}\big(e^{t(v_{1}-v_{2})}-1\big)\gamma(dt)\\
 & \geq & \int_{0}^{\infty}e^{-4tC_{2}}\big(e^{ts_{0}}-1\big)\gamma(dt)\geq\alpha e^{-4MC_{2}}\big(e^{ms_{0}}-1\big).
\end{eqnarray*}
Similarly $1-\kappa_{1}=\int_{0}^{\infty}\big(1-e^{tv_{1}}\big)\gamma(dt)\geq\int_{0}^{\infty}\big(1-e^{-tC_{1}}\big)\gamma(dt)\geq\alpha\big(1-e^{-mC_{1}}\big).$
\end{proof}

\subsection{Consequences of the mixing property}

First we establish general bounds for the variance of integrals with
respect to the empirical measure which are due to the mixing behavior
of the sequence $(X_{\tau_{k}})_{k}.$ The following Lemma is a straightforward
generalization of \citep[Lemma 6.2]{GobetHoffmannReiss:2004}. Since
this is the key result to bound the stochastic error, we give the
proof to keep the paper self-contained. 
\begin{lem}
\label{lem:Variance estimates}For bounded $H_{1},H_{2}\in L^{2}([0,1])$
we have the following two variance estimates:
\begin{eqnarray*}
\Var\Big[\frac{1}{N}\sum_{n=1}^{N}H_{1}(X_{\tau_{n}})\Big] & \lesssim & N^{-1}\E\left[H_{1}^{2}(X_{0})\right],\\
\Var\Big[\frac{1}{N}\sum_{n=0}^{N-1}H_{1}(X_{\tau_{n}})H_{2}(X_{\tau_{n+1}})\Big] & \lesssim & N^{-1}\E\big[H_{1}^{2}(X_{0})H_{2}^{2}(X_{\tau_{1}})\big].
\end{eqnarray*}
\end{lem}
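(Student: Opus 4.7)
The plan is to exploit the spectral gap of the generalized transition operator $R$ (established in Lemma~\ref{lem:eigenvaluesR}) applied to the Markov chain $(X_{\tau_n})_{n\ge 0}$, which is stationary by the choice $X_0\sim\mu$ and the independence of the sampling distances from $X$. By self-adjointness of $R$ on $L^2(\mu)$ and the spectral decomposition, one has the quantitative mixing bound $\|R^k f - \langle f,1\rangle_\mu\|_{L^2(\mu)}\le \kappa_1^k\|f-\langle f,1\rangle_\mu\|_{L^2(\mu)}$ for every $f\in L^2(\mu)$, with $\kappa_1<1$ bounded away from $1$ uniformly over $(\sigma,b,\gamma)\in\Theta_s\times\Gamma$.

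For the first bound, I would expand the variance into a double sum of covariances
\[
\Var\!\Big[\tfrac1N\sum_{n=1}^N H_1(X_{\tau_n})\Big]=\tfrac1{N^2}\sum_{k,\ell=1}^N\mathrm{Cov}\bigl(H_1(X_{\tau_k}),H_1(X_{\tau_\ell})\bigr).
\]
Writing $\bar H_1:=H_1-\langle H_1,1\rangle_\mu$, stationarity and the Markov property yield $\mathrm{Cov}(H_1(X_{\tau_k}),H_1(X_{\tau_{k+j}}))=\langle \bar H_1,R^{|j|}\bar H_1\rangle_\mu$, which by Cauchy--Schwarz and the exponential contraction is dominated by $\kappa_1^{|j|}\|\bar H_1\|_{L^2(\mu)}^2$. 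Summing the resulting geometric series in $j$ gives a bound of order $N^{-1}\|H_1\|_{L^2(\mu)}^2$, and since $\mu$ is bounded above and below by positive constants uniformly over $\Theta_s$, $\|H_1\|_{L^2(\mu)}^2\sim\E[H_1^2(X_0)]$.

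For the second bound, I would set $F_n:=H_1(X_{\tau_n})H_2(X_{\tau_{n+1}})$ and introduce the auxiliary function $g(x):=H_1(x)\,RH_2(x)$, so that by conditioning on $\mathcal F_{\tau_{k+1}}$ the Markov property gives $\E[F_{k+j}\mid\mathcal F_{\tau_{k+1}}]=(R^{j-1}g)(X_{\tau_{k+1}})$ for $j\ge1$. Writing $\bar g:=g-\langle g,1\rangle_\mu$ and using $R\cdot 1=1$, the covariance becomes
\[
\mathrm{Cov}(F_k,F_{k+j})=\E\!\bigl[H_1(X_{\tau_k})H_2(X_{\tau_{k+1}})\,(R^{j-1}\bar g)(X_{\tau_{k+1}})\bigr].
\]
Cauchy--Schwarz followed by the contraction bound estimates this by $\kappa_1^{j-1}\sqrt{\E[F_0^2]}\cdot\|\bar g\|_{L^2(\mu)}$. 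The central estimate is then $\|\bar g\|_{L^2(\mu)}^2\le\|g\|_{L^2(\mu)}^2=\int H_1^2(RH_2)^2\,d\mu\le\int H_1^2\,R(H_2^2)\,d\mu=\E[H_1^2(X_0)H_2^2(X_{\tau_1})]$, where I invoke Jensen's inequality applied to $R$ (a Markov operator). This produces $|\mathrm{Cov}(F_k,F_{k+j})|\le\kappa_1^{j-1}\E[H_1^2(X_0)H_2^2(X_{\tau_1})]$, and summing the geometric series over $j$ together with the diagonal contribution yields the claimed $N^{-1}$ rate.

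The only subtle step is the use of Jensen to turn $(RH_2)^2$ into $R(H_2^2)$, which is what reconstitutes the two-point quadratic mean on the right-hand side instead of the cruder bound $\|H_1\|_\infty^2\|H_2\|_{L^2(\mu)}^2$; this is crucial when the proof is later applied to products like $\psi_\lambda\psi_{\lambda'}$, where summing $\E[\psi_\lambda^2(X_0)\psi_{\lambda'}^2(X_{\tau_1})]$ benefits from estimate~\eqref{eq:Sum of squared base functions}. The rest is a routine variance decomposition, so I do not expect any real obstacle.
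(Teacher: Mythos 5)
Your proposal is correct and follows essentially the same route as the paper: covariance expansion, contraction of $R$ on the orthogonal complement of constants via the uniform spectral gap, conditioning on $X_{\tau_{k+1}}$ with the auxiliary function $g=H_1\cdot RH_2$ for the bilinear case, and the Jensen step $(RH_2)^2\le R(H_2^2)$ to recover $\E[H_1^2(X_0)H_2^2(X_{\tau_1})]$. The only cosmetic difference is in how Cauchy--Schwarz is paired in the off-diagonal terms (the paper gets the symmetric product $\E[H_2^2(X_0)H_1^2(X_{\tau_1})]^{1/2}\E[H_1^2(X_0)H_2^2(X_{\tau_1})]^{1/2}$ and uses reversibility), which does not change the argument.
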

\begin{proof}
Denote $f\left(X_{\tau_{n}}\right)=H_{1}\left(X_{\tau_{n}}\right)-\E\left[H_{1}\left(X_{\tau_{n}}\right)\right]$.
Consider $m\geq n$ and let $k=m-n$. Since process $X$ is stationary
and has a uniform spectral gap $\|R^{k}f\|_{L^{2}(\mu)}\leq\|f\|_{L^{2}(\mu)}\L_{\gamma}^{k}(s_{0})$
holds for every function $f$ that is $L^{2}(\mu)$- orthogonal to
constants. Arguing analogously as in the proof of Lemma \ref{lem:eigenvaluesR}we
obtain $\sup_{\gamma\in\Gamma}\L_{\gamma}(s_{0})<1$. Hence, by the
Cauchy-Schwarz inequality,
\begin{eqnarray*}
\E\left[f\left(X_{\tau_{n}}\right)f\left(X_{\tau_{m}}\right)\right] & = & \E\left[f\left(X_{\tau_{n}}\right)\E\left[f\left(X_{\tau_{n+k}}\right)|X_{\tau_{n}}\right]\right]\\
 & = & \langle f,R^{k}f\rangle_{\mu}\leq\|f\|_{L^{2}(\mu)}^{2}\L_{\gamma}^{k}(s_{0}).
\end{eqnarray*}
Since $\|f\|_{L^{2}(\mu)}^{2}=\Var\left[H_{1}\left(X_{0}\right)\right]\leq\E\left[H_{1}^{2}\left(X_{0}\right)\right]$
and 
\[
\Var\Big[\sum_{n=1}^{N}H_{1}(X_{\tau_{n}})\Big]=\sum_{n,m=1}^{N}\E\left[f(X_{\tau_{n}})f(X_{\tau_{m}})\right]\le\|f\|_{L^{2}(\mu)}^{2}\sum_{n,m=1}^{N}\L_{\gamma}^{|n-m|}(s_{0})
\]
 to prove the first inequality we just have to show that $\sum_{n,m=1}^{N}\L_{\gamma}^{|n-m|}(s_{0})\lesssim N$.
This easily follows from the formula for the sum of finite geometric
series.

To prove the second inequality, first note that
\begin{alignat*}{1}
 & \Var\Big[\frac{1}{N}\sum_{n=0}^{N-1}H_{1}(X_{\tau_{n}})H_{2}(X_{\tau_{n+1}})\Big]\leq\frac{1}{N^{2}}\E\Big[\sum_{n,m=0}^{N-1}H_{1}(X_{\tau_{n}})H_{2}(X_{\tau_{n+1}})H_{1}(X_{\tau_{m}})H_{2}(X_{\tau_{m+1}})\Big].\\
 & \quad=\frac{1}{N^{2}}\E\Big[\sum_{n,m=0}^{N-1}H_{1}(X_{\tau_{n}})H_{2}(X_{\tau_{n+1}})H_{1}(X_{\tau_{m}})H_{2}(X_{\tau_{m+1}})\Big]-\langle H_{1},RH_{2}\rangle_{\mu}^{2}.
\end{alignat*}
Since the sum of diagonal terms equals $N^{-1}\E\Big[H_{1}^{2}(X_{0})H_{2}^{2}(X_{\tau_{1}})\Big]$,
it does not exceed the claimed upper bound. The sum of the other terms
equals 
\[
\frac{1}{N^{2}}\sum_{\begin{subarray}{c}
n,m=0\\
n\neq m
\end{subarray}}^{N-1}\langle H_{2}\cdot(RH_{1}),R^{|n-m|-1}(H_{1}\cdot(RH_{2})-\left\langle H_{1},RH_{2}\right\rangle _{\mu})\rangle_{\mu}\underset{\lesssim N^{-1}\E\big[H_{1}^{2}(X_{0})H_{2}^{2}(X_{\tau_{1}})\big]}{-\underbrace{\frac{1}{N}\langle H_{1},RH_{2}\rangle_{\mu}^{2}}}.
\]
Using the spectral gap of the operator $R$ together with the Cauchy-Schwarz
inequality, we obtain that
\[
\left\Vert R^{|n-m|-1}(H_{1}\cdot(RH_{2})-\langle H_{1},RH_{2}\rangle_{\mu})\right\Vert _{L^{2}(\mu)}\lesssim\|H_{1}\cdot(RH_{2})\|_{L^{2}(\mu)}\L_{\gamma}^{|n-m|-1}(s_{0}).
\]
Consequently, using again Cauchy-Schwarz and the formula for the sum
of finite geometric series, we can bound the considered variance by
\begin{align*}
 & \frac{1}{N^{2}}\sum_{\begin{subarray}{c}
n,m=0\\
n\neq m
\end{subarray}}^{N-1}\|H_{2}\cdot(RH_{1})\|_{L^{2}(\mu)}\|H_{1}\cdot(RH_{2})\|_{L^{2}(\mu)}\L_{\gamma}^{|n-m|-1}(s_{0})\\
 & \qquad\lesssim\frac{1}{N}\|H_{2}\cdot(RH_{1})\|_{L^{2}(\mu)}\|H_{1}\cdot(RH_{2})\|_{L^{2}(\mu)}\\
 & \qquad\lesssim\frac{1}{N}\E\left[H_{2}^{2}(X_{0})H_{1}^{2}(X_{\tau_{1}})\right]^{1/2}\E\left[H_{1}^{2}(X_{0})H_{2}^{2}(X_{\tau_{1}})\right]^{1/2}\\
 & \qquad=\frac{1}{N}\E\left[H_{2}^{2}(X_{0})H_{1}^{2}(X_{\tau_{1}})\right].\tag*{{\qedhere}}
\end{align*}

\end{proof}
The first consequence of the previous result is the following bound
for the risk of the estimator of the invariant measure.
\begin{prop}
\label{prop:Invariant measure bound}Under Assumption~\ref{ass:drift volatility}
it holds
\begin{equation}
\E\Big[\left\Vert \mu-\hat{\mu}_{J}\right\Vert _{L^{2}}^{2}\Big]\lesssim N^{-2Js}+N^{-1}2^{J}.\label{eq:MuError}
\end{equation}
Furthermore if we choose $2^{J}\sim N^{1/(2s+3)}$ the event $\T_{0}=\{\forall x\in[0,1]\,\inf\mu/2\leq\hat{\mu}_{J}(x)\leq2\sup\mu\}$
satisfies $\P\Big(\Omega\setminus\T_{0}\Big)\lesssim N^{-\frac{2s}{2s+3}}$.\end{prop}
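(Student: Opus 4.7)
The plan for (\ref{eq:MuError}) is a standard bias--variance decomposition. Since $\pi_J$ is the $L^2$-orthogonal projection onto $V_J$, I would split
\begin{equation*}
\|\mu - \hat{\mu}_J\|_{L^2}^2 = \|(I-\pi_J)\mu\|_{L^2}^2 + \|\pi_J\mu - \hat{\mu}_J\|_{L^2}^2.
\end{equation*}
The text already notes that the explicit formula (\ref{eq:mu}) combined with $(\sigma,b)\in\Theta_s$ and $\inf\sigma\geq d$ yields $\mu\in H^s$ with a uniform bound on $\|\mu\|_{H^s}$; Jackson's inequality (\ref{eq:bernsteinJackson}) then controls the bias by $2^{-2Js}$. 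By Parseval and the first estimate in Lemma~\ref{lem:Variance estimates},
\begin{equation*}
\E\bigl[\|\pi_J\mu - \hat{\mu}_J\|_{L^2}^2\bigr] = \sum_{|\lambda|\leq J}\Var\bigl[\langle\psi_\lambda,\mu_N\rangle\bigr] \lesssim \frac{1}{N}\sum_{|\lambda|\leq J}\E\bigl[\psi_\lambda^2(X_0)\bigr].
\end{equation*}
The remaining sum equals $\int_0^1 \sum_{|\lambda|\leq J}\psi_\lambda^2(x)\,\mu(x)\,dx$, which is bounded by $\|\mu\|_\infty\cdot\bigl\|\sum_{|\lambda|\leq J}\psi_\lambda^2\bigr\|_\infty\lesssim 2^J$ by (\ref{eq:Sum of squared base functions}) and the uniform upper bound on $\mu$ noted after (\ref{eq:mu}). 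All constants are uniform on $\Theta_s\times\Gamma$.

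For the uniform statement on $\T_0$, I would use that $\mu$ is uniformly bounded below by some $c>0$ on $[0,1]$ across $\Theta_s$, so that
\begin{equation*}
\Omega\setminus\T_0 \subset \bigl\{\|\hat{\mu}_J-\mu\|_\infty > c/2\bigr\}.
\end{equation*}
Then I would split $\|\hat{\mu}_J-\mu\|_\infty \leq \|\hat{\mu}_J-\pi_J\mu\|_\infty + \|\pi_J\mu-\mu\|_\infty$. Since $s>1$, the Sobolev embedding $H^s\hookrightarrow C^{s-1/2}$ and Jackson's inequality applied in $L^\infty$ give the deterministic bias bound $\|\pi_J\mu-\mu\|_\infty\lesssim 2^{-J(s-1/2)}$, which with $2^J\sim N^{1/(2s+3)}$ is $o(1)$ and thus $\leq c/4$ for $N$ large enough. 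For the stochastic part, observe that $\hat{\mu}_J-\pi_J\mu\in V_J$, so Cauchy--Schwarz together with (\ref{eq:Sum of squared base functions}) yields the Bernstein-type bound
\begin{equation*}
\|\hat{\mu}_J-\pi_J\mu\|_\infty \lesssim 2^{J/2}\,\|\hat{\mu}_J-\pi_J\mu\|_{L^2}.
\end{equation*}

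Combining these with Markov's inequality and the variance bound from the first part,
\begin{equation*}
\P\Bigl(\|\hat{\mu}_J-\pi_J\mu\|_\infty > c/4\Bigr) \lesssim 2^J\,\E\bigl[\|\hat{\mu}_J-\pi_J\mu\|_{L^2}^2\bigr] \lesssim \frac{2^{2J}}{N} \sim N^{-(2s+1)/(2s+3)},
\end{equation*}
which is absorbed into the claimed rate $N^{-2s/(2s+3)}$. The finitely many $N$ for which the bias is not yet below $c/4$ contribute a trivial bound $\leq 1$, absorbed by adjusting the implicit constant. The proof is essentially routine given the tools already in hand; the only mild subtlety is the transfer from the $L^2$-variance estimate to an $L^\infty$ deviation statement, handled by the wavelet Bernstein inequality via (\ref{eq:Sum of squared base functions}).
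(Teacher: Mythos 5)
Your proposal is correct and follows essentially the same route as the paper: Jackson's inequality for the bias, Lemma~\ref{lem:Variance estimates} together with (\ref{eq:Sum of squared base functions}) for the variance, and a sup-norm deviation bound plus Markov for $\T_{0}$. The only (harmless) difference is in the last step, where you pass from $L^{2}$ to $L^{\infty}$ on $V_{J}$ via Cauchy--Schwarz and (\ref{eq:Sum of squared base functions}), costing a factor $2^{J/2}$, whereas the paper goes through $\|\cdot\|_{\infty}\lesssim\|\cdot\|_{H^{1}}\lesssim2^{J}\|\cdot\|_{L^{2}}$; both yield a probability bound dominated by $N^{-2s/(2s+3)}$.
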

\begin{proof}
The explicit formula (\ref{eq:mu}) for $\mu$ shows that $\|\mu\|_{H^{s}}$
is uniformly bounded over $\Theta_{s}.$ Jackson's inequality yields
\[
\|(I-\pi_{J})\mu\|_{L^{2}}^{2}\lesssim2^{-2Js}.
\]
Using Lemma \ref{lem:Variance estimates}, we obtain 
\begin{eqnarray*}
\E\left[\|\pi_{J}\mu-\hat{\mu}_{J}\|_{L^{2}}^{2}\right] & = & \sum_{|\lambda|\leq J}\E\big[\langle\psi_{\lambda},\mu-\mu_{N}\rangle{}^{2}\big]=\sum_{|\lambda|\leq J}\Var\big[\langle\psi_{\lambda},\mu_{N}\rangle\big]\\
 & \lesssim & N^{-1}\sum_{|\lambda|\leq J}\E\left[\psi_{\lambda}^{2}(X_{0})\right]\lesssim2^{J}N^{-1}
\end{eqnarray*}
and (\ref{eq:MuError}) follows by the triangle inequality. Furthermore,
by Jackson's inequality,
\begin{eqnarray*}
\sup_{x\in[0,1]}\pi_{J}\mu(x) & \leq & \|\mu\|_{\infty}+\|(I-\pi_{J})\mu\|_{\infty}\lesssim\|\mu\|_{H^{1}}+\|(I-\pi_{J})\mu\|_{H^{1}}\lesssim1+2^{-J(s-1)}\\
\inf_{x\in[0,1]}\pi_{J}\mu(x) & \geq & \inf_{x\in[0,1]}\mu(x)-\|(I-\pi_{J})\mu\|_{\infty}\gtrsim1-2^{-J(s-1)}.
\end{eqnarray*}
Hence, for $J$ large enough, $\pi_{J}\mu$ is bounded by $\frac{3}{4}\inf\mu$
from below and $\frac{3}{2}\sup\mu$ from above. Consequently, $\hat{\mu}_{J}(x)$
lies in $[\frac{1}{2}\inf\mu,2\sup\mu]$ if $\|\hat{\mu}_{J}-\pi_{J}\mu\|_{\infty}$
is small enough. For a given constant $C>0$, Bernstein's inequality
shows
\begin{align*}
\P\Big(\|\hat{\mu}_{J}-\pi_{J}\mu\|_{\infty}>C\Big) & \leq C^{-2}\E\big[\|\pi_{J}\mu-\hat{\mu}_{J}\|_{\infty}^{2}\big]\lesssim\E\big[\|\pi_{J}\mu-\hat{\mu}_{J}\|_{H^{1}}^{2}\big]\\
 & \lesssim2^{2J}\E\Big[\|\pi_{J}\mu-\hat{\mu}_{J}\|_{L^{2}}^{2}\Big]\lesssim N^{-\frac{2s}{2s+3}}.\tag*{{\qedhere}}
\end{align*}

\end{proof}

\subsection{Analysis of the projection error}

Denote by $(\kappa_{J,i},u_{J,i})$, $i=0,1,2,...,\text{dim}V_{J}-1$,
the eigenpairs of the operator $\pi_{J}^{\mu}R\pi_{J}^{\mu}$ ordered
decreasingly with respect to the eigenvalues. Note that $(\kappa_{J,i},u_{J,i})$
are solutions of the eigenvalue problem for the operator $R$ restricted
to the finite approximation spaces $V_{J}$ on $L^{2}(\mu)$:
\begin{equation}
\langle Ru_{J,i},v\rangle_{\mu}=\kappa_{J,i}\langle u_{J,i},v\rangle_{\mu},\text{ for every }v\in V_{J}.\label{eq:eigenvalue problem for R_J}
\end{equation}
Take $u_{J,i}$ normalized in the $L^{2}$ norm. Since $\pi_{J}^{\mu}R\pi_{J}^{\mu}$
is a positive definite self-adjoint operator on $L^{2}(\mu)$ with
$\|\pi_{J}^{\mu}R\pi_{J}^{\mu}\|_{L^{2}(\mu)}\leq1$ we have $0<\kappa_{J,i}\leq1$. 
\begin{prop}
\label{prop:Bias error bounds}For sufficiently large $J$ it holds
uniformly on $\Theta_{s}$ 
\[
\left|\kappa_{J,1}-\kappa_{1}\right|+\left\Vert u_{J,1}-u_{1}\right\Vert _{H^{1}}\lesssim2^{-Js}.
\]
\end{prop}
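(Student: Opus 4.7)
The plan is to invoke the classical Galerkin approximation theory for the eigenvalue problem of a compact, self-adjoint, positive operator on a Hilbert space (this is essentially the appendix result \ref{thm:EP for self-adjoint compact positive-definite operator} referred to in the sketch after Theorem~\ref{thm:Upper bounds}). The Galerkin problem restricted to $V_{J}$ with respect to $\langle\cdot,\cdot\rangle_{\mu}$ is precisely \eqref{eq:eigenvalue problem for R_J}, i.e.\ the eigenvalue problem for $\pi_{J}^{\mu}R\pi_{J}^{\mu}$ on $V_{J}$. Since $L$ is self-adjoint on $L^{2}(\mu)$ and $R=\mathcal{L}_{\gamma}(-L)$ in the functional calculus sense, $R$ is self-adjoint, compact and positive on $L^{2}(\mu)$. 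Lemma~\ref{lem:eigenvaluesR} and \eqref{eq:spectral gap} yield a lower bound for the spectral gap around $\kappa_{1}$ which is uniform over $\Theta_{s}\times\Gamma$, and $\kappa_{1}$ is simple. By a min-max/perturbation argument, for $J$ large enough no other eigenvalue of $\pi_{J}^{\mu}R\pi_{J}^{\mu}$ can cross this gap, so $(\kappa_{J,1},u_{J,1})$ is indeed the Galerkin approximation of $(\kappa_{1},u_{1})$.

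Standard Galerkin theory then produces the two abstract bounds
\begin{equation*}
\|u_{1}-u_{J,1}\|_{L^{2}(\mu)}\lesssim\|(I-\pi_{J}^{\mu})u_{1}\|_{L^{2}(\mu)},\qquad|\kappa_{1}-\kappa_{J,1}|\lesssim\|(I-\pi_{J}^{\mu})u_{1}\|_{L^{2}(\mu)}^{2},
\end{equation*}
with constants depending only on the spectral gap and on $\|R\|$, hence uniformly over $\Theta_{s}\times\Gamma$. Because $\mu$ is bounded above and below by positive constants uniformly on $\Theta_{s}$, the norms $\|\cdot\|_{L^{2}}$ and $\|\cdot\|_{L^{2}(\mu)}$ are equivalent. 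Using the quasi-optimality of $\pi_{J}^{\mu}$ in $L^{2}(\mu)$, Jackson's inequality in \eqref{eq:bernsteinJackson} and the regularity bound $\|u_{1}\|_{H^{s+1}}\lesssim 1$ from \eqref{eq:eigenfunctions bounds}, I would derive
\begin{equation*}
\|(I-\pi_{J}^{\mu})u_{1}\|_{L^{2}(\mu)}\leq\|u_{1}-\pi_{J}u_{1}\|_{L^{2}(\mu)}\lesssim\|u_{1}-\pi_{J}u_{1}\|_{L^{2}}\lesssim 2^{-J(s+1)}.
\end{equation*}
In particular $|\kappa_{1}-\kappa_{J,1}|\lesssim 2^{-2J(s+1)}\leq 2^{-Js}$, taking care of the eigenvalue part.

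For the eigenfunction in the $H^{1}$-norm, I would split
\begin{equation*}
\|u_{1}-u_{J,1}\|_{H^{1}}\leq\|(I-\pi_{J})u_{1}\|_{H^{1}}+\|\pi_{J}u_{1}-u_{J,1}\|_{H^{1}}.
\end{equation*}
Jackson's inequality controls the first summand by $2^{-Js}\|u_{1}\|_{H^{s+1}}\lesssim 2^{-Js}$. Since $\pi_{J}u_{1}-u_{J,1}\in V_{J}$, Bernstein's inequality from \eqref{eq:bernsteinJackson} gives $\|\pi_{J}u_{1}-u_{J,1}\|_{H^{1}}\lesssim 2^{J}\|\pi_{J}u_{1}-u_{J,1}\|_{L^{2}}$. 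A triangle inequality combined with the $L^{2}(\mu)$-bound from the previous step yields $\|\pi_{J}u_{1}-u_{J,1}\|_{L^{2}}\lesssim 2^{-J(s+1)}$, so the second summand is also $\lesssim 2^{J}\cdot 2^{-J(s+1)}=2^{-Js}$.

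The main technical obstacle is not any single estimate but the \emph{uniformity} of the Galerkin constants in $(\sigma,b,\gamma)\in\Theta_{s}\times\Gamma$. It hinges on three ingredients already in place: the uniform spectral gap (Lemma~\ref{lem:eigenvaluesR}), the uniform two-sided bounds on $\mu$ following \eqref{eq:mu}, and the uniform Sobolev bound on $u_{1}$ in \eqref{eq:eigenfunctions bounds}. The other delicate point, namely the identification of $\kappa_{J,1}$ with the correct Galerkin eigenvalue, is handled by choosing $J$ large enough that $\|R-\pi_{J}^{\mu}R\pi_{J}^{\mu}\|_{L^{2}(\mu)}$ is smaller than half of the spectral gap; this is exactly what the smoothness of $u_{1}$ provides.
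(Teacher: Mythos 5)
Your proposal follows essentially the same route as the paper's proof: apply the abstract perturbation result for compact, self-adjoint, positive-definite operators (Theorem~\ref{thm:EP for self-adjoint compact positive-definite operator}) to $\pi_{J}^{\mu}R\pi_{J}^{\mu}$, bound $\|(I-\pi_{J}^{\mu})u_{1}\|_{L^{2}(\mu)}\lesssim2^{-J(s+1)}$ via Jackson's inequality, norm equivalence and \eqref{eq:eigenfunctions bounds}, check the spectral-gap condition uniformly via Lemma~\ref{lem:eigenvaluesR}, and upgrade from $L^{2}$ to $H^{1}$ with Bernstein's inequality at the cost of a factor $2^{J}$. The only deviation is your quadratic eigenvalue bound $|\kappa_{1}-\kappa_{J,1}|\lesssim\|(I-\pi_{J}^{\mu})u_{1}\|_{L^{2}(\mu)}^{2}$, which the paper's appendix theorem does not supply (it gives only a linear bound), but since the linear bound already yields $2^{-J(s+1)}\le2^{-Js}$ this is immaterial.
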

\begin{proof}
It suffices to show that $\left|\kappa_{J,1}-\kappa_{1}\right|+\left\Vert u_{J,1}-u_{1}\right\Vert _{L^{2}}\lesssim2^{-J(s+1)}.$
Indeed, by Jackson's and Bernstein's inequalities 
\begin{eqnarray*}
\left\Vert u_{J,1}-u_{1}\right\Vert _{H^{1}} & \leq & \left\Vert u_{J,1}-\pi_{J}u_{1}\right\Vert _{H^{1}}+\left\Vert \left(I-\pi_{J}\right)u_{1}\right\Vert _{H^{1}}\lesssim2^{J}\left\Vert u_{J,1}-\pi_{J}u_{1}\right\Vert _{L^{2}}+\left\Vert \left(I-\pi_{J}\right)u_{1}\right\Vert _{H^{1}}\\
 & \lesssim & 2^{J}\left\Vert u_{J,1}-u_{1}\right\Vert _{L^{2}}+2^{J}\left\Vert (I-\pi_{J})u_{1}\right\Vert _{L^{2}}+\left\Vert \left(I-\pi_{J}\right)u_{1}\right\Vert _{H^{1}}\\
 & \lesssim & 2^{J}\left\Vert u_{J,1}-u_{1}\right\Vert _{L^{2}}+2^{-Js}
\end{eqnarray*}
where we used the upper bound (\ref{eq:eigenfunctions bounds}). 

Recall that $R$ is a compact self-adjoint positive-definite operator
on $L^{2}(\mu)$. Furthermore 
\begin{eqnarray*}
\left\Vert \left(I-\pi_{J}^{\mu}\right)u_{1}\right\Vert _{L^{2}(\mu)} & \lesssim & \left\Vert \left(I-\pi_{J}^{\mu}\right)\left(I-\pi_{J}\right)u_{1}\right\Vert _{L^{2}}\lesssim\left\Vert \left(I-\pi_{J}\right)u_{1}\right\Vert _{L^{2}}\\
 & \lesssim & 2^{-J(s+1)}\|u_{1}\|_{H^{s+1}}\lesssim2^{-J(s+1)}.
\end{eqnarray*}
Consequently, since by Lemma \ref{lem:eigenvaluesR} operator $R$
has a uniform spectral gap inequality
\[
\left\Vert \left(I-\pi_{J}^{\mu}\right)u_{1}\right\Vert _{L^{2}(\mu)}\leq\frac{\kappa_{1}-\kappa_{2}}{4\kappa_{1}}
\]
holds for $J$ large enough. It follows that we can use Theorem \ref{thm:EP for self-adjoint compact positive-definite operator}
obtaining
\[
\left|\kappa_{J,1}-\kappa_{1}\right|+\Big\|\frac{u_{J,1}}{\|u_{J,1}\|_{L^{2}(\mu)}}-\frac{u_{1}}{\|u_{1}\|_{L^{2}(\mu)}}\Big\|_{L^{2}(\mu)}\lesssim2^{-J(s+1)}.
\]
The claim follows since $\|u_{J,1}-u_{1}\|_{L^{2}}\lesssim\Big\|\frac{u_{J,1}}{\|u_{J,1}\|_{L^{2}(\mu)}}-\frac{u_{1}}{\|u_{1}\|_{L^{2}(\mu)}}\Big\|_{L^{2}(\mu)}$
by the equivalence of norms $\|\cdot\|_{L^{2}}$ and $\|\cdot\|_{L^{2}(\mu)}$.\end{proof}
\begin{cor}
\label{cor:Projected-operators-spectral gap}Projected operators $\pi_{J}^{\mu}R\pi_{J}^{\mu}$
have a uniform spectral gap, i.e. there exists $s_{1}>0$ such that
\[
\min\left\{ \left|\kappa_{J,1}\right|,\left|\kappa_{J,2}-\kappa_{J,1}\right|\right\} \geq s_{1}
\]
for every $J$ large enough.\end{cor}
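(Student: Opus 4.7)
The plan is to combine the approximation bound from Proposition~\ref{prop:Bias error bounds} (which controls only the first non-trivial projected eigenpair) with the uniform spectral gap of $R$ from Lemma~\ref{lem:eigenvaluesR}, and then to invoke the Courant--Fischer min-max principle to pin down $\kappa_{J,2}$ from above. In this way I avoid having to prove a separate convergence result for the second projected eigenpair, for which a uniform lower bound on $v_2-v_3$ is not directly available from the estimates collected earlier in the paper.

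For the first inequality $|\kappa_{J,1}|\ge s_1$ I would combine two already established ingredients: Proposition~\ref{prop:Bias error bounds} gives $|\kappa_{J,1}-\kappa_1|\lesssim 2^{-Js}$ uniformly over $\Theta_s\times\Gamma$, and Lemma~\ref{lem:eigenvaluesR} provides the uniform lower bound $\kappa_1\ge s_0>0$. Hence $\kappa_{J,1}\ge s_0/2$ for every $J$ sufficiently large, uniformly in the parameter class.

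For the spectral-gap bound $|\kappa_{J,2}-\kappa_{J,1}|\ge s_1$ the key observation is that $V_J$ contains the constants, so both $R$ and its $L^{2}(\mu)$-compression $\pi_J^\mu R\pi_J^\mu$ share the trivial eigenpair $(1,\mathbf{1})$. Indexing the decreasing eigenvalues as $1=\kappa_0>\kappa_1>\kappa_2\ge\cdots$ and $1=\kappa_{J,0}>\kappa_{J,1}\ge\kappa_{J,2}\ge\cdots$, the identity $\langle\pi_J^\mu R\pi_J^\mu v,v\rangle_\mu=\langle Rv,v\rangle_\mu$ for $v\in V_J$ together with the Courant--Fischer max-min characterisation in $L^{2}(\mu)$ gives
\[
\kappa_{J,k}=\max_{\substack{V\subset V_J\\\dim V=k+1}}\min_{\substack{v\in V\\\|v\|_{L^{2}(\mu)}=1}}\langle Rv,v\rangle_\mu\;\le\;\kappa_k,
\]
since the outer maximum runs over a strictly smaller collection of subspaces. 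In particular $\kappa_{J,2}\le\kappa_2$, and combined with the first step and the uniform gap $\kappa_1-\kappa_2\ge s_0$ from Lemma~\ref{lem:eigenvaluesR} this yields
\[
\kappa_{J,1}-\kappa_{J,2}\;\ge\;\kappa_{J,1}-\kappa_2\;=\;(\kappa_1-\kappa_2)-(\kappa_1-\kappa_{J,1})\;\ge\;s_0-C\,2^{-Js},
\]
which is at least $s_0/2$ for $J$ large. Setting $s_1:=s_0/2$ then completes the argument.

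I do not foresee any serious obstacle: the only mild subtleties will be the correct book-keeping of eigenvalue indices (matching the trivial eigenvalue $1$ on both sides of the min-max comparison) and checking that the constant function belongs to $V_J$, which is automatic for the wavelet and cosine bases considered in the paper. Crucially, only the one-sided inequality $\kappa_{J,2}\le\kappa_2$ is needed, which sidesteps the fact that a uniform spectral gap of $L$ beyond its first eigenvalue has not been established.
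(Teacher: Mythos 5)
Your proposal is correct and follows essentially the same route as the paper: the paper's one-line proof points to the proof of Theorem~\ref{thm:EP for self-adjoint compact positive-definite operator}, which establishes precisely the Courant--Fischer comparison $\lambda_{2}^{V}\le\lambda_{2}$ and then bounds $\lambda_{1}^{V}-\lambda_{2}^{V}\ge(\lambda_{1}-\lambda_{2})-|\lambda_{1}-\lambda_{1}^{V}|\ge\tfrac12(\lambda_{1}-\lambda_{2})$, exactly your argument with $\kappa_{J,2}\le\kappa_{2}$ and Proposition~\ref{prop:Bias error bounds} controlling $|\kappa_{1}-\kappa_{J,1}|$. The only cosmetic difference is that you invoke Proposition~\ref{prop:Bias error bounds} as a black box for the first-eigenvalue convergence rather than the intermediate estimate $|\lambda_{1}-\lambda_{1}^{V}|\le3\|T\|\,\|(I-\pi)x_{1}\|$ inside that appendix proof.
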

\begin{proof}
Follows from the proof of Theorem \ref{thm:EP for self-adjoint compact positive-definite operator}.
\end{proof}

\subsection{Analysis of the stochastic error\label{sub:Analysis stoch error}}

Define the operator $R_{J}:V_{J}\to V_{J}$ as the restriction of
the operator $\pi_{J}^{\mu}R\pi_{J}^{\mu}$ to the finite dimensional
Hilbert space $V_{J}$. Recall that the operator $G_{J}$ was defined
by the Gram matrix of the inner product $\langle\cdot,\cdot\rangle_{\mu}$,
i.e. for $v\in V_{J}$ we have $\langle v,G_{J}v\rangle=\langle v,v\rangle_{\mu}$.
Note that by (\ref{eq:eigenvalue problem for R_J})
\begin{equation}
R_{J}u_{J,i}=\kappa_{J,i}G_{J}u_{J,i},\label{eq:Unperturbed GSEP}
\end{equation}
hence $(\kappa_{J,i},u_{J,i})$ are solutions of generalized symmetric
eigenvalue problem for $R_{J},G_{J}$. When matrix $\hat{G}_{J}$
is invertible the corresponding generalized eigenvalue problem for
$\hat{G}_{J},\hat{R}_{J}$, namely
\begin{equation}
\hat{R}_{J}\hat{u}_{J,i}=\hat{\kappa}_{J,i}\hat{G}_{J}\hat{u}_{J,i}\label{eq:Perturbed GSEP}
\end{equation}
has $\text{dim}V_{J}$ solutions that we denote by $(\hat{\kappa}_{J,i},\hat{u}_{J,i})$,
$i=0,1,...,\text{dim}V_{J}-1$. Recall that the eigenfunctions $\hat{u}_{J,i}$
are normalized in $L^{2}[0,1]$.

In this subsection we want to bound the expected error between $(\kappa_{J,1},u_{J,1})$
and $(\hat{\kappa}_{J,1},\hat{u}_{J,1})$. From the general theory
of a posteriori error bound techniques for generalized symmetric eigenvalue
problems (see Section \ref{sub:Appendix Generalized symmetric eigenvalue problems})
we know that the error between the eigenpairs can be controlled by
the norm of the residual vectors:
\[
r=\big(\hat{R}_{J}-R_{J}\big)u_{J,1}+\kappa_{J,1}\big(G_{J}-\hat{G}_{J}\big)u_{J,1}\text{ or }r^{*}=\big(R_{J}-\hat{R}_{J}\big)\hat{u}_{J,1}+\hat{\kappa}_{J,1}\big(\hat{G}_{J}-G_{J}\big)\hat{u}_{J,1}.
\]
Since the eigenpair $(\hat{\kappa}_{J,1},\hat{u}_{J,1})$ of the problem
(\ref{eq:Perturbed GSEP}) is random and depends on operators $\hat{R}_{J}$
and $\hat{G}_{J}$ it is easier to analyze the norm of the vector
$r$ rather than $r^{*}$ (cf. Lemmas \ref{lem:G hat error} and \ref{lem: Ralpha error}
where $v$ is a deterministic function). Consequently in the following
we refer to $r$ as the residual vector. In the notation of Section
\ref{sub:Appendix Generalized symmetric eigenvalue problems} we treat
the deterministic problem (\ref{eq:Unperturbed GSEP}) as a perturbed
approximation of the data dependent problem (\ref{eq:Perturbed GSEP}).
\begin{lem}
\label{lem:G hat error}For any $v\in V_{J}$ we have, uniformly on
$\Theta_{s}\times\Gamma$,
\[
\E\Big[\|(\hat{G}_{J}-G_{J})v\|_{L^{2}}^{2}\Big]\lesssim N^{-1}2^{J}\|v\|_{L^{2}}^{2}.
\]
\end{lem}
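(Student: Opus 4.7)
The plan is to reduce the matrix--vector error to a sum of wavelet-indexed centered empirical averages and then invoke the variance estimate of Lemma~\ref{lem:Variance estimates}. Writing $v=\sum_{|\lambda'|\le J}v_{\lambda'}\psi_{\lambda'}\in V_J$, I expand
\[
\|(\hat G_J-G_J)v\|_{L^2}^2=\sum_{|\lambda|\le J}Z_\lambda^2,\qquad Z_\lambda:=\sum_{|\lambda'|\le J}(\hat G_{\lambda,\lambda'}-G_{\lambda,\lambda'})v_{\lambda'}.
\]
Substituting the definitions gives
\[
Z_\lambda=\frac{1}{N}\Bigl(\tfrac12\psi_\lambda(X_0)v(X_0)+\sum_{n=1}^{N-1}\psi_\lambda(X_{\tau_n})v(X_{\tau_n})+\tfrac12\psi_\lambda(X_{\tau_N})v(X_{\tau_N})\Bigr)-\langle\psi_\lambda,v\rangle_\mu,
\]
so $Z_\lambda$ is a centered empirical average (with negligible boundary weight modifications) of the bounded function $H_\lambda:=\psi_\lambda v$ against the stationary sequence $(X_{\tau_n})_{n=0}^N$.

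Taking expectations term by term, I would apply the first variance estimate of Lemma~\ref{lem:Variance estimates} to $H_\lambda$; the boundary terms with weight $1/2$ contribute only $O(N^{-2})$ to $\mathrm{Var}[Z_\lambda]$ and are absorbed into the bound. This yields
\[
\mathbb{E}[Z_\lambda^2]=\Var[Z_\lambda]\lesssim N^{-1}\,\mathbb{E}\bigl[\psi_\lambda^2(X_0)v^2(X_0)\bigr].
\]
Summing over $|\lambda|\le J$ and pulling the sum inside the expectation,
\[
\mathbb{E}\|(\hat G_J-G_J)v\|_{L^2}^2\lesssim N^{-1}\,\mathbb{E}\Bigl[v^2(X_0)\sum_{|\lambda|\le J}\psi_\lambda^2(X_0)\Bigr]\lesssim N^{-1}2^J\,\mathbb{E}[v^2(X_0)],
\]
where the last step uses the uniform bound (\ref{eq:Sum of squared base functions}). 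Finally, stationarity and the uniform upper bound on $\mu$ give $\mathbb{E}[v^2(X_0)]=\int_0^1 v^2\mu\,dx\lesssim\|v\|_{L^2}^2$, uniformly on $\Theta_s$, which proves the claim.

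The argument is essentially routine once the problem is cast in coordinates; the only mildly delicate point is the uniformity in $\gamma\in\Gamma$, which is already built into Lemma~\ref{lem:Variance estimates} through the uniform bound $\sup_{\gamma\in\Gamma}\mathcal{L}_\gamma(s_0)<1$ derived in its proof, so no additional work is required there. I would not expect any genuine obstacle beyond bookkeeping of the two boundary terms.
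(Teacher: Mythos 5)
Your proof is correct and follows essentially the same route the paper intends: the paper's own proof is a one-line reference to the variance estimates of Lemma~\ref{lem:Variance estimates} and to the analogous computation in Gobet--Hoffmann--Rei{\ss}, and your expansion in the orthonormal basis, the application of the first variance bound to $H_\lambda=\psi_\lambda v$ (with the $O(N^{-2})$ boundary-weight correction), the use of \eqref{eq:Sum of squared base functions}, and the uniform bounds on $\mu$ are exactly the steps carried out explicitly for the companion Lemma~\ref{lem: Ralpha error}. No gaps.
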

\begin{proof}
Given Lemma \ref{lem:Variance estimates}, the proof is a straight
forward estimate analogously to \citep[Lemma 4.8]{GobetHoffmannReiss:2004}.
\end{proof}
Now, we are ready to prove Lemma \ref{lem:Set T_0 size}:
\begin{proof}[Proof of Lemma \ref{lem:Set T_0 size}]
A standard Neumann series argument shows that $\hat{G}_{J}$ is invertible
on $\T_{1}$ with $\big\|\hat{G}_{J}{}^{-1}\big\|_{L^{2}}\leq2\|G_{J}^{-1}\|_{L^{2}}$.
Since the invariant density $\mu$ has a positive lower bound uniformly
on $\Theta_{s}$, for any $v\in V_{J}$ we have 
\[
\left\langle v,G_{J}v\right\rangle =\left\langle v,v\right\rangle _{\mu}=\|v\|_{L^{2}(\mu)}^{2}\gtrsim\|v\|_{L^{2}}^{2}.
\]
Hence the smallest eigenvalue of the operator $G_{J}$ is uniformly
separated from zero. This implies that $G_{J}^{-1}$ is uniformly
bounded in the operator norm. The classical Hilbert-Schmidt norm inequality
yields
\[
\big\|\hat{G}_{J}-G_{J}\big\|_{L^{2}}^{2}\leq\sum_{|\lambda|\leq J}\big\|(\hat{G}_{J}-G_{J})\psi_{\lambda}\big\|_{L^{2}}^{2}.
\]
Consequently, by Lemma \ref{lem:G hat error}, $\E\big[\big\|\hat{G}_{J}-G_{J}\big\|_{L^{2}}^{2}\big]\lesssim N^{-1}2^{2J}$
and $\P\left(\Omega\setminus\T_{1}\right)\leq N^{-1}2^{2J}$ follows
from Chebyshev's inequality.\end{proof}
\begin{lem}
\label{lem: Ralpha error}For any $v\in V_{J}$ we have, uniformly
on $\Theta_{s}\times\Gamma$,
\[
\E\Big[\|(\hat{R}_{J}-R_{J})v\|{}_{L^{2}}^{2}\Big]\lesssim N^{-1}2^{J}\|v\|_{L^{2}}^{2}.
\]
\end{lem}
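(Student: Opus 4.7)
The plan is to expand $(\hat{R}_J - R_J)v$ in the wavelet basis and estimate the resulting coordinate variances via Lemma~\ref{lem:Variance estimates}, being careful not to apply Cauchy--Schwarz too early (which would cost an extra factor $2^J$).

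Write $v = \sum_{|\lambda'|\leq J} v_{\lambda'} \psi_{\lambda'}$ with $\|v\|_{L^2}^2 = \sum_{\lambda'} v_{\lambda'}^2$. Since $(\psi_\lambda)$ is an $L^2$-orthonormal system and $\E[\hat{R}_{\lambda,\lambda'}] = \langle \psi_\lambda, R\psi_{\lambda'}\rangle_\mu$ equals the $(\lambda,\lambda')$-entry of $R_J$, I would write
\begin{equation*}
\E\big[\|(\hat{R}_J - R_J)v\|_{L^2}^2\big]
= \sum_{|\lambda|\leq J} \Var\Big[\sum_{|\lambda'|\leq J} \hat{R}_{\lambda,\lambda'}\, v_{\lambda'}\Big].
\end{equation*}
The key observation is that the inner sum can be written in closed form by pulling $v$ out of the matrix. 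By linearity,
\begin{equation*}
\sum_{|\lambda'|\leq J} \hat{R}_{\lambda,\lambda'}\, v_{\lambda'}
= \frac{1}{2N}\sum_{n=0}^{N-1}\Big(\psi_\lambda(X_{\tau_{n+1}})\,v(X_{\tau_n}) + v(X_{\tau_{n+1}})\,\psi_\lambda(X_{\tau_n})\Big).
\end{equation*}

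Next I would apply the second variance estimate of Lemma~\ref{lem:Variance estimates} to each of the two Markov-chain averages separately, with $(H_1,H_2)=(v,\psi_\lambda)$ and $(\psi_\lambda,v)$, yielding
\begin{equation*}
\Var\Big[\sum_{\lambda'} \hat{R}_{\lambda,\lambda'}\, v_{\lambda'}\Big]
\lesssim N^{-1}\Big(\E\big[\psi_\lambda^2(X_0)\,v^2(X_{\tau_1})\big] + \E\big[v^2(X_0)\,\psi_\lambda^2(X_{\tau_1})\big]\Big).
\end{equation*}
Finally, summing over $|\lambda|\leq J$ and interchanging sum and expectation, the bound~(\ref{eq:Sum of squared base functions}) gives $\sum_\lambda \psi_\lambda^2 \lesssim 2^J$ pointwise, so
\begin{equation*}
\sum_{|\lambda|\leq J}\E\big[\psi_\lambda^2(X_0)\,v^2(X_{\tau_1})\big]
\lesssim 2^J\,\E[v^2(X_{\tau_1})] \lesssim 2^J \|v\|_{L^2}^2,
\end{equation*}
where in the last step I use that the invariant density $\mu$ is bounded uniformly on $\Theta_s$, making $\|\cdot\|_{L^2(\mu)}$ and $\|\cdot\|_{L^2}$ equivalent. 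The symmetric term is handled identically, giving the claimed bound $N^{-1} 2^J \|v\|_{L^2}^2$.

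The main technical subtlety is the order of operations: applying Cauchy--Schwarz on $\sum_{\lambda'}\hat R_{\lambda,\lambda'}v_{\lambda'}$ first would produce $\sum_{\lambda'}\Var[\hat R_{\lambda,\lambda'}]$ and, after summing $\psi_{\lambda'}^2$ over $\lambda'$, inflate the rate to $N^{-1}2^{2J}$. Keeping $v$ intact inside the variance lets (\ref{eq:Sum of squared base functions}) be invoked only once and preserves the linear dependence on $2^J$. All uniformity over $\Theta_s\times\Gamma$ is automatic: the variance constants in Lemma~\ref{lem:Variance estimates} are uniform (via $\sup_{\gamma\in\Gamma}\L_\gamma(s_0)<1$, as already established in its proof), and the boundedness constants for $\mu$ are uniform over $\Theta_s$.
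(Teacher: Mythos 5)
Your proof is correct and follows essentially the same route as the paper: expand in the wavelet coordinates, keep $v$ intact inside the variance, apply the second estimate of Lemma~\ref{lem:Variance estimates} with $(H_1,H_2)$ being $v$ and $\psi_\lambda$, and then sum over $\lambda$ using (\ref{eq:Sum of squared base functions}) together with the uniform bounds on $\mu$. Your version is in fact slightly more carefully written than the paper's (which elides the symmetrization of $\hat R_{\lambda,\lambda'}$ in its first display), and your remark about why Cauchy--Schwarz must not be applied before summing over $\lambda'$ correctly identifies the point of the argument.
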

\begin{proof}
By Lemma \ref{lem:Variance estimates} we obtain
\begin{alignat*}{1}
\E\Big[\|(\hat{R}_{J}-R_{J})v\|{}_{L^{2}}^{2}\Big] & =\sum_{|\lambda|\leq J}\Var\Big[\frac{1}{N}\sum_{n=0}^{N-1}\psi_{\lambda}\big(X_{\tau_{n}}\big)v\big(X_{\tau_{n}}\big)\Big]\\
 & \lesssim\sum_{|\lambda|\leq J}N^{-1}\E\left[\psi_{\lambda}^{2}\big(X_{\tau_{1}}\big)v^{2}\big(X_{0}\big)\right]\\
 & \lesssim N^{-1}\big\|\sum_{|\lambda|\leq J}\psi_{\lambda}^{2}\big\|_{\infty}\E\big[v^{2}(X_{0})\big]\\
 & \lesssim N^{-1}2^{J}\|v^{2}\|_{L^{2}(\mu)}^{2}.\tag*{{\qedhere}}
\end{alignat*}
\end{proof}
\begin{cor}
\label{Cor:Bound on residual vector}We have, uniformly on $\Theta_{s}\times\Gamma$,
the following bound on the norm of the residual vector $r=\big(\hat{R}_{J}-R_{J}\big)u_{J,1}+\kappa_{J,1}\big(G_{J}-\hat{G}_{J}\big)u_{J,1}$
\[
\E\left[\left\Vert r\right\Vert _{L^{2}}^{2}\right]\lesssim N^{-1}2^{J}.
\]
\end{cor}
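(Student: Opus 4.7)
The plan is to apply the triangle inequality to decompose the residual vector into its two natural parts, then invoke the two preceding lemmas on the stochastic errors of $\hat{R}_{J}$ and $\hat{G}_{J}$. Since $u_{J,1}$ is an eigenfunction of the finite-dimensional problem (\ref{eq:eigenvalue problem for R_J}) normalized in $L^{2}$, it satisfies $u_{J,1}\in V_{J}$ with $\|u_{J,1}\|_{L^{2}}=1$, so both lemmas apply directly with $v=u_{J,1}$ and the unit $L^{2}$-norm drops out.

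More concretely, I would first use $(a+b)^{2}\le 2a^{2}+2b^{2}$ together with the triangle inequality to obtain
\[
\|r\|_{L^{2}}^{2}\le 2\,\|(\hat{R}_{J}-R_{J})u_{J,1}\|_{L^{2}}^{2}+2\kappa_{J,1}^{2}\,\|(G_{J}-\hat{G}_{J})u_{J,1}\|_{L^{2}}^{2}.
\]
Taking expectations and using Lemma~\ref{lem: Ralpha error} on the first summand and Lemma~\ref{lem:G hat error} on the second, each contribution is bounded by a constant multiple of $N^{-1}2^{J}\|u_{J,1}\|_{L^{2}}^{2}=N^{-1}2^{J}$.

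To conclude with the claimed uniform bound over $\Theta_{s}\times\Gamma$, I only need to check that $\kappa_{J,1}$ stays bounded uniformly in $(\sigma,b,\gamma)$ and $J$. This is immediate because $\pi_{J}^{\mu}R\pi_{J}^{\mu}$ is a positive self-adjoint contraction on $L^{2}(\mu)$, hence $\kappa_{J,1}\in(0,1]$; alternatively Corollary~\ref{cor:Projected-operators-spectral gap} and Proposition~\ref{prop:Bias error bounds} ensure $\kappa_{J,1}\to\kappa_{1}\le 1$ uniformly. There is no real obstacle here: both norm estimates on $(\hat{R}_{J}-R_{J})v$ and $(\hat{G}_{J}-G_{J})v$ have already been done in the preceding two lemmas, and the uniformity in $(\sigma,b,\gamma)\in\Theta_{s}\times\Gamma$ is inherited from theirs. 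The result then follows in a couple of lines.
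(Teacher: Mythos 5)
Your proposal is correct and follows essentially the same route as the paper: decompose $r$ into its two summands, note that $\kappa_{J,1}\in(0,1]$ is uniformly bounded, and apply Lemmas~\ref{lem: Ralpha error} and~\ref{lem:G hat error} with the deterministic, $L^{2}$-normalized $v=u_{J,1}$. Nothing is missing.
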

\begin{proof}
Note that from Proposition \ref{prop:Bias error bounds} we know that,
for $J$ big enough, the eigenvalue $\kappa_{J,1}$ is uniformly bounded.
Consequently
\[
\E\left[\left\Vert r\right\Vert _{L^{2}}^{2}\right]\lesssim\E\left[\|(\hat{R}_{J}-R_{J})u_{1}^{J}\|_{L^{2}}^{2}\right]+\E\left[\|(\hat{G}_{J}-G_{J})u_{1}^{J}\|_{L^{2}}^{2}\right]\lesssim N^{-1}2^{J}
\]
by Lemmas \ref{lem:G hat error} and \ref{lem: Ralpha error}.\end{proof}
\begin{prop}
\label{prop:Variance error bounds}On the event $\T_{1}$ the eigenpair
$(\hat{\kappa}_{J,1},\hat{u}_{J,1})$ is the biggest nontrivial eigenpair
of the matrix $\hat{G}_{J}^{-1}\hat{R}_{J}$. Furthermore there exists
a set $\T_{2}\subset\T_{1}$ such that
\[
\P\left(\Omega\setminus\T_{2}\right)\lesssim N^{-1}2^{3J}
\]
and
\[
\E\left[\I_{\T_{2}}\cdot\left(\left|\kappa_{J,1}-\hat{\kappa}_{J,1}\right|^{2}+\left\Vert u_{J,1}-\hat{u}_{J,1}\right\Vert _{L^{2}}^{2}\right)\right]\lesssim N^{-1}2^{J}
\]
holds uniformly on $\Theta_{s}$.\end{prop}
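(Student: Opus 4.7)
My plan is to split the statement into two parts. The first part is almost immediate from the construction: on $\T_1$, Lemma~\ref{lem:Set T_0 size} gives invertibility of $\hat{G}_J$, so the generalized eigenvalue problem $(\hat{R}_J,\hat{G}_J)$ is equivalent to the standard eigenvalue problem for $\hat{G}_J^{-1}\hat{R}_J$. This matrix is self-adjoint with respect to $\langle\cdot,\cdot\rangle_{\hat{G}_J}$, all its eigenvalues are real and bounded by $1$ (as established just before the statement), and the constant function is an eigenvector for the eigenvalue $1$. By definition $(\hat{\kappa}_{J,1},\hat{u}_{J,1})$ is the eigenpair associated with the biggest eigenvalue strictly smaller than $1$, which is exactly the biggest nontrivial eigenpair of $\hat{G}_J^{-1}\hat{R}_J$.

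For the quantitative part, the idea is to invoke the a posteriori bound for generalized symmetric eigenvalue problems (Theorem~\ref{thm:GHEP}) together with the absolute Weyl theorem to compare the deterministic eigenpair $(\kappa_{J,1},u_{J,1})$ of $(R_J,G_J)$ with the empirical one $(\hat{\kappa}_{J,1},\hat{u}_{J,1})$ of $(\hat{R}_J,\hat{G}_J)$. I define
\[
\T_2:=\T_1\cap\Big\{\|\hat{R}_J-R_J\|_{HS}\le c_0\Big\}\cap\Big\{\|\hat{G}_J-G_J\|_{HS}\le c_0\Big\}\cap\Big\{\|r\|_{L^2}\le c_0\Big\},
\]
where $c_0>0$ is a constant (depending only on the model parameters) chosen so small that (i) the combined perturbation is less than one quarter of the uniform spectral gap $s_1$ from Corollary~\ref{cor:Projected-operators-spectral gap}, which via Weyl's theorem keeps a spectral gap of at least $s_1/2$ around $\kappa_{J,1}$ for the random operator $\hat{G}_J^{-1}\hat{R}_J$, and (ii) the a posteriori bound in Theorem~\ref{thm:GHEP} is applicable.

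To estimate $\P(\Omega\setminus\T_2)$, I use a union bound together with Chebyshev's inequality. Writing the Hilbert--Schmidt norms as $\|\hat{R}_J-R_J\|_{HS}^2=\sum_{|\lambda|\le J}\|(\hat{R}_J-R_J)\psi_\lambda\|_{L^2}^2$ and similarly for $G_J$, Lemmas~\ref{lem:G hat error} and \ref{lem: Ralpha error} yield second moments of order $N^{-1}2^{2J}$; Corollary~\ref{Cor:Bound on residual vector} gives $\E\|r\|_{L^2}^2\lesssim N^{-1}2^J$; and Lemma~\ref{lem:Set T_0 size} already controls $\P(\Omega\setminus\T_1)\le N^{-1}2^{2J}$. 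Altogether these bounds combine to $\P(\Omega\setminus\T_2)\lesssim N^{-1}2^{2J}\le N^{-1}2^{3J}$.

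On $\T_2$ the a posteriori technique applied to the residual $r=(\hat{R}_J-R_J)u_{J,1}+\kappa_{J,1}(G_J-\hat{G}_J)u_{J,1}=(\hat{R}_J-\kappa_{J,1}\hat{G}_J)u_{J,1}$ produces an eigenpair of $(\hat{R}_J,\hat{G}_J)$ whose eigenvalue lies within $\lesssim\|r\|_{L^2}$ of $\kappa_{J,1}$ and whose normalized eigenvector lies within $\lesssim\|r\|_{L^2}$ of $u_{J,1}$ in $L^2$. Because $c_0$ was chosen smaller than a quarter of the preserved spectral gap, this must be the first nontrivial pair, i.e.\ $(\hat{\kappa}_{J,1},\hat{u}_{J,1})$ — this matching via the absolute Weyl theorem is the delicate step, since the a posteriori estimate by itself only provides an existence statement. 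Squaring and taking expectation then gives
\[
\E\big[\I_{\T_2}\big(|\kappa_{J,1}-\hat{\kappa}_{J,1}|^2+\|u_{J,1}-\hat{u}_{J,1}\|_{L^2}^2\big)\big]\lesssim\E\big[\|r\|_{L^2}^2\big]\lesssim N^{-1}2^J,
\]
uniformly on $\Theta_s\times\Gamma$. The main obstacle is to ensure that the eigenpair identified by the a posteriori bound is really the \emph{ordered} pair $(\hat{\kappa}_{J,1},\hat{u}_{J,1})$; this is precisely what forces $\T_2$ to include the operator-norm control of $\hat{R}_J-R_J$ and $\hat{G}_J-G_J$ needed for Weyl's theorem.
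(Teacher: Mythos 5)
Your proof is correct and follows essentially the same route as the paper: the a posteriori bound of Theorem~\ref{thm:GHEP} controlled by the residual $r$, the absolute Weyl theorem to match the \emph{ordered} eigenpair, and Chebyshev via the Hilbert--Schmidt bounds from Lemmas~\ref{lem:G hat error} and~\ref{lem: Ralpha error} together with Corollary~\ref{Cor:Bound on residual vector}. The only (harmless) deviation is that you define $\T_{2}$ through deterministic thresholds on $\|\hat{R}_{J}-R_{J}\|_{HS}$, $\|\hat{G}_{J}-G_{J}\|_{HS}$ and $\|r\|_{L^{2}}$ instead of directly as $\T_{1}\cap\{i_{0}=1\}\cap\{\delta(\hat{\kappa}_{J,1})\ge s_{1}/2\}$, which even spares the paper's union bound over the $2^{J}$ eigenvalues and gives the slightly sharper $\P(\Omega\setminus\T_{2})\lesssim N^{-1}2^{2J}$.
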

\begin{proof}
By Theorem \ref{thm:GHEP} there exists some $0\leq i_{0}\leq\text{dim}V_{J}-1$
such that the eigenpair $(\hat{\kappa}_{J,i_{0}},\hat{u}_{J,i_{0}})$
of the problem (\ref{eq:Perturbed GSEP}) satisfies
\begin{eqnarray*}
\left|\kappa_{J,1}-\hat{\kappa}_{J,i_{0}}\right| & \leq & \big\|\hat{G}_{J}^{-1}\big\|_{L^{2}}\left\Vert r\right\Vert _{L^{2}},\\
\|u_{J,1}-\hat{u}_{J,i_{0}}\|_{L^{2}} & \leq & \frac{2\sqrt{2}}{\delta\left(\hat{\kappa}_{J,i}\right)}\big\|\hat{G}_{J}\big\|_{L^{2}}^{1/2}\big\|\hat{G}_{J}^{-1}\big\|_{L^{2}}^{3/2}\left\Vert r\right\Vert _{L^{2}},
\end{eqnarray*}
where $\delta\left(\hat{\kappa}_{J,i_{0}}\right)=\min_{j\neq i_{0}}\left\{ \left|\hat{\kappa}_{J,j}-\kappa_{J,1}\right|\right\} $
is the isolation distance of the eigenvalues $\hat{\kappa}_{J,i_{0}}$
and $\kappa_{J,1}$. Let $s_{1}$ be the uniform spectral gap of operators
$R_{J}$ (see Corollary \ref{cor:Projected-operators-spectral gap}).
Define $\T_{2}$ as the subset of $\T_{1}$ for which $i_{0}=1$ and
$\delta\left(\hat{\kappa}_{J,1}\right)\geq\frac{1}{2}s_{1}$. Since
$\big\|\hat{G}_{J}^{-1}\big\|_{L^{2}}$ and $\big\|\hat{G}_{J}\big\|_{L^{2}}$
are uniformly bounded on the event $\T_{1}$ and $\E[\left\Vert r\right\Vert _{L^{2}}^{2}]\lesssim N^{-1}2^{J}$
the desired error bound holds when we restrict to the event $\T_{2}$. 

To finish the proof we must show that $\P\left(\Omega\setminus\T_{2}\right)\lesssim N^{-1}2^{3J}$.
Denote
\[
\T_{2}=\T_{1}\cap\underset{\T_{2,1}}{\underbrace{\left\{ i_{0}=1\right\} }}\cap\underset{\T_{2,2}}{\underbrace{\{\delta(\hat{\kappa}_{J,1})\geq s_{1}/2\}}.}
\]
First, using the absolute Weyl theorem (Theorem \ref{thm:Weyl GHEP})
we observe that for any $0\leq j\leq\text{dim}V_{J}-1$
\begin{eqnarray*}
\E\left[\I_{\T_{1}}\cdot\left|\kappa_{J,j}-\hat{\kappa}_{J,j}\right|^{2}\right] & \leq & \E\Big[\I_{\T_{1}}\cdot\big\|\hat{G}_{J}^{-1}\big\|_{L^{2}}^{2}\big\|(R_{J}-\hat{R}_{J})-\kappa_{J,j}(G_{J}-\hat{G}_{J})\big\|_{L^{2}}^{2}\Big]\\
 & \lesssim & \E\Big[\I_{\T_{1}}\cdot\big\| R_{J}-\hat{R}_{J}\big\|_{L^{2}}^{2}\Big]+\kappa_{J,j}\E\Big[\I_{\T_{0}}\cdot\big\| G_{J}-\hat{G}_{J}\big\|_{L^{2}}^{2}\Big]\\
 & \lesssim & N^{-1}2^{2J}
\end{eqnarray*}
by the classical Hilbert-Schmidt norm inequality. Consequently, using
the uniform lower bound on the spectral gap of $R_{J}$, we obtain
\begin{eqnarray*}
\P\left(\T_{1}\setminus\T_{2,1}\right) & \lesssim & \E\Big[\I_{\T_{1}\setminus\T_{2,1}}\cdot\big|\kappa_{J,2}-\kappa_{J,1}\big|^{2}\Big]\\
 & \lesssim & \E\Big[\I_{\T_{1}\setminus\T_{2,1}}\cdot\big|\kappa_{J,i_{0}}-\kappa_{J,1}\big|^{2}\Big]\\
 & \lesssim & \E\Big[\I_{\T_{1}\setminus\T_{2,1}}\cdot\big|\kappa_{J,i_{0}}-\hat{\kappa}_{J,i_{0}}\big|^{2}\Big]+\E\Big[\I_{\T_{1}\setminus\T_{2,1}}\cdot\big|\hat{\kappa}_{J,i_{0}}-\kappa_{J,1}\big|^{2}\Big]\\
 & \lesssim & N^{-1}2^{2J}.
\end{eqnarray*}
Consider now the event $\T_{2,2}$. Since 
\begin{eqnarray*}
\delta\left(\hat{\kappa}_{J,1}\right) & = & \min_{j\neq1}\left|\hat{\kappa}_{J,j}-\kappa_{J,1}\right|\geq\min_{j\neq1}\left\{ \left|\kappa_{J,j}-\kappa_{J,1}\right|-\left|\hat{\kappa}_{J,j}-\kappa_{J,j}\right|\right\} \\
 & \geq & s_{1}-\max_{j\neq1}\left\{ \left|\hat{\kappa}_{J,j}-\kappa_{J,j}\right|\right\} ,
\end{eqnarray*}
we have
\begin{align*}
\P\left(\T_{1}\setminus\T_{2,2}\right) & \leq\P\big(\T_{1}\cap\big\{\max_{j\ne1}\big\{|\hat{\kappa}_{J,j}-\kappa_{J,j}|\big\}\geq s_{1}/2\big\}\big)\\
 & \leq\sum_{1<j\leq\text{dim}V_{J}-1}\P\big(\T_{1}\cap\big\{\big|\hat{\kappa}_{J,j}-\kappa_{J,j}\big|\geq s_{1}/2\big\}\big)\\
 & \lesssim\sum_{1<j\leq\text{dim}V_{J}-1}\E\Big[\I_{\T_{1}}\cdot\big|\hat{\kappa}_{J,j}-\kappa_{J,j}\big|^{2}\Big]\lesssim N^{-1}2^{3J}.\tag*{{\qedhere}}
\end{align*}

\end{proof}

\subsection{Proof of Theorem \ref{thm:Upper bounds}}

From now on we chose $2^{J}\sim N^{1/(2s+3)}.$ Recall that the biggest
negative eigenvalue of the infinitesimal generator $L$ is denoted
by $v_{1}$ which is estimated by $\hat{v}_{J,1}$ from (\ref{eq:vHat}).
\begin{lem}
\label{lem:vHat}Choose $2^{J}\sim N^{1/(2s+3)}.$ There is an event
$\mathcal{T}_{3}\subset\mathcal{T}_{2}$ satisfying $\P(\Omega\setminus\mathcal{T}_{3})\lesssim N^{-2s/(2s+3)}$
uniformly on $\Theta_{s}\times\Gamma$ and 
\[
\sup_{(\sigma,b\gamma)\in\Theta_{s}\times\Gamma}\E[\mathbf{1}_{\mathcal{T}_{3}}|v_{1}-\hat{v}_{J,1}|^{2}]\lesssim N^{-\frac{2s}{2s+3}}.
\]
In particular we can assume that $|\hat{v}_{J,1}|$ is uniformly bounded
on $\T_{3}$.\end{lem}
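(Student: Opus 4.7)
The plan is to split the error as
\[
v_{1}-\hat{v}_{J,1}=\bigl(\hat{\L}^{-1}(\hat{\kappa}_{J,1})-\L_{\gamma}^{-1}(\hat{\kappa}_{J,1})\bigr)+\bigl(\L_{\gamma}^{-1}(\hat{\kappa}_{J,1})-\L_{\gamma}^{-1}(\kappa_{1})\bigr),
\]
which is valid on the event where $\hat{\kappa}_{J,1}>0$. The second summand is an eigenvalue approximation error and the first is a Laplace transform estimation error; both will be controlled via a local Lipschitz property of $\L_{\gamma}^{-1}$ near $\kappa_{1}$. The key analytic ingredient is that $\L_{\gamma}'(v)=-\int_{0}^{\infty}te^{-tv}\gamma(dt)$ satisfies $|\L_{\gamma}'(v)|\geq\alpha(\min I)e^{-(\max I)v}$ by Assumption~\ref{ass:times}, so on any bounded set of $v$'s (in particular on a fixed neighbourhood of $v_{1}$, whose location is uniformly bounded by~(\ref{eq:eigenvalue bounds})) the derivative is bounded away from zero uniformly in $\gamma\in\Gamma$. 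Consequently $\L_{\gamma}^{-1}$ is Lipschitz on a fixed neighbourhood of $\kappa_{1}$ with a constant depending only on $I,\alpha$ and the bounds from $\Theta_{s}$.

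For the second summand, combining Propositions~\ref{prop:Bias error bounds} and \ref{prop:Variance error bounds} gives $\E[\mathbf{1}_{\T_{2}}|\hat{\kappa}_{J,1}-\kappa_{1}|^{2}]\lesssim 2^{-2Js}+N^{-1}2^{J}\lesssim N^{-2s/(2s+3)}$, and the Lipschitz estimate transfers this rate to $|\L_{\gamma}^{-1}(\hat{\kappa}_{J,1})-\L_{\gamma}^{-1}(\kappa_{1})|^{2}$ provided $\hat{\kappa}_{J,1}$ stays in a fixed neighbourhood of $\kappa_{1}$. For the first summand, setting $\hat{y}=\hat{\L}^{-1}(\hat{\kappa}_{J,1})$ we write $\L_{\gamma}(\hat{y})-\hat{\L}(\hat{y})=\L_{\gamma}(\hat{y})-\hat{\kappa}_{J,1}$, so a mean value expansion of $\L_{\gamma}^{-1}$ bounds $|\hat{y}-\L_{\gamma}^{-1}(\hat{\kappa}_{J,1})|$ by a constant multiple of $|\hat{\L}(\hat{y})-\L_{\gamma}(\hat{y})|$, and hence by the uniform deviation $\sup_{y\in K}|\hat{\L}(y)-\L_{\gamma}(y)|$ over some deterministic bounded interval $K\subset\R_{+}$ containing both $v_{1}$ and $\hat{y}$. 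Since $y\mapsto e^{-y\Delta_{n}}$ is bounded by $1$ and Lipschitz on $K$, a standard chaining or $\varepsilon$-net argument together with Hoeffding's inequality yields $\E[\sup_{y\in K}|\hat{\L}(y)-\L_{\gamma}(y)|^{2}]\lesssim N^{-1}$.

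To make everything rigorous, I would define
\[
\T_{3}:=\T_{2}\cap\bigl\{|\hat{\kappa}_{J,1}-\kappa_{1}|\leq\delta\bigr\}\cap\Bigl\{\sup_{y\in K}|\hat{\L}(y)-\L_{\gamma}(y)|\leq\eta\Bigr\}
\]
for sufficiently small but fixed $\delta,\eta>0$ chosen so that, on $\T_{3}$, the point $\hat{\kappa}_{J,1}$ lies in the neighbourhood of $\kappa_{1}$ where the Lipschitz estimate applies and $\hat{y}\in K$. Markov's inequality applied to the bounds above yields $\P(\T_{2}\setminus\{|\hat{\kappa}_{J,1}-\kappa_{1}|\leq\delta\})\lesssim\delta^{-2}(2^{-2Js}+N^{-1}2^{J})\lesssim N^{-2s/(2s+3)}$ and $\P(\sup_{y\in K}|\hat{\L}-\L_{\gamma}|>\eta)\lesssim\eta^{-2}N^{-1}$; together with $\P(\Omega\setminus\T_{2})\lesssim N^{-1}2^{3J}\sim N^{-2s/(2s+3)}$ from Proposition~\ref{prop:Variance error bounds}, this gives the required bound on $\P(\Omega\setminus\T_{3})$. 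The expected squared error bound then follows directly from the two termwise estimates. Uniform boundedness of $|\hat{v}_{J,1}|$ on $\T_{3}$ is immediate because $\hat{y}\in K$.

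The main obstacle I anticipate is ensuring that all constants (the Lipschitz constant of $\L_{\gamma}^{-1}$ near $\kappa_{1}$, the interval $K$, and the chaining bound for the empirical Laplace transform) can be chosen uniformly over $\gamma\in\Gamma$ and $(\sigma,b)\in\Theta_{s}$. This is precisely where the structural assumption $\gamma(I)\geq\alpha$ with $I\subset(0,\infty)$ compact plays its role, preventing $\L_{\gamma}'$ from degenerating and keeping $v_{1}$ in a bounded range via Lemma~\ref{lem:eigenvaluesR}; the rest of the argument is then a mechanical combination of these uniform quantitative bounds.
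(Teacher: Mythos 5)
Your argument is correct and reaches the same rate, but it reorganizes the paper's proof in a way worth noting. The paper writes $\hat{\kappa}_{J,1}=\hat{\L}(-\hat{v}_{J,1})$ and Taylor-expands the \emph{random} function $\hat{\L}$ at $-v_{1}$, obtaining $v_{1}-\hat{v}_{J,1}=\big(\L_{\gamma}(-v_{1})-\hat{\L}(-v_{1})+\hat{\kappa}_{J,1}-\kappa_{1}\big)/\hat{\L}'(\xi)$; it must then bound the random denominator $\hat{\L}'(\xi)$ away from zero, which requires (i) a separate consistency step showing $|\hat{v}_{J,1}-v_{1}|<\epsilon$ with high probability and (ii) a uniform deviation bound for the \emph{derivative} process $\hat{\L}'-\L_{\gamma}'$ over a neighbourhood of $-v_{1}$, handled via a Vapnik--\v{C}ervonenkis argument for the class $\{x\mapsto xe^{-yx}\}$. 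You instead insert the intermediate point $\L_{\gamma}^{-1}(\hat{\kappa}_{J,1})$ and apply the mean value theorem to the \emph{deterministic} inverse $\L_{\gamma}^{-1}$, so your denominator bound $|\L_{\gamma}'|\ge\alpha(\min I)e^{-(\max I)|v|}$ is a purely analytic fact, uniform over $\Gamma$ by Assumption~\ref{ass:times} and over $\Theta_{s}$ by the eigenvalue bounds; the price is that you need the uniform deviation of $\hat{\L}$ itself over a fixed interval $K$, both to control the first summand and to guarantee $\hat{y}\in K$ (which also delivers the boundedness of $\hat{v}_{J,1}$). The two empirical-process inputs are of comparable difficulty, and both routes rest on the same external ingredients: $\E[\I_{\T_{2}}|\hat{\kappa}_{J,1}-\kappa_{1}|^{2}]\lesssim N^{-2s/(2s+3)}$ from Propositions~\ref{prop:Bias error bounds} and \ref{prop:Variance error bounds}, $\P(\Omega\setminus\T_{2})\lesssim N^{-1}2^{3J}$, and the parametric rate for the empirical Laplace transform. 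Two small points to make explicit when writing this up: choose $\delta$ smaller than the uniform lower bound on $\kappa_{1}$ from Lemma~\ref{lem:eigenvaluesR} so that $\hat{\kappa}_{J,1}>0$ on $\T_{3}$ and the indicator in the definition (\ref{eq:vHat}) is inactive; and verify via monotonicity of $\hat{\L}$ at the endpoints of $K$ that $\hat{y}=\hat{\L}^{-1}(\hat{\kappa}_{J,1})$ indeed lies in $K$ on $\T_{3}$, which is exactly the role played by the consistency step in the paper.
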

\begin{proof}
For convenience we denote $m:=\min I$, $M:=\max I$. On $\mathcal{T}_{2}$
we have $\hat{\kappa}_{J,1}>0$ and thus $\hat{\kappa}_{J,1}=\hat{\mathcal{L}}(-\hat{v}_{J,1})$.

\emph{Step 1:} Let us start with a consistency result for $\hat{v}_{J,1}$.
Since $\hat{\mathcal{L}}$ is non-increasing and continuous, we have
for any fixed $\epsilon\in(0,C_{1})$ with $C_{1}$ from (\ref{eq:eigenvalue bounds})
\begin{align*}
\mathbb{P}_{\gamma}(|\hat{v}_{J,1}-v_{1}|<\epsilon)\ge & \mathbb{P}_{\gamma}\big(\hat{\mathcal{L}}(-v_{1}+\epsilon)<\hat{\kappa}_{J,1}<\hat{\mathcal{L}}(-v_{1}-\epsilon)\big).
\end{align*}
Using
\begin{equation}
\delta:=\alpha me^{(v_{1}-\epsilon)M}\le\inf_{\gamma\in\Gamma}\inf_{|y+v_{1}|\le\epsilon}|\mathcal{L}'_{\gamma}(y)|,\label{eq:Lderiv}
\end{equation}
we have $|\mathcal{L}_{\gamma}(-v_{1})-\mathcal{L}_{\gamma}(-v_{1}\pm\epsilon)|\ge\delta\epsilon$
uniformly in $\gamma\in\Gamma$ and 
\begin{align*}
\mathbb{P}_{\sigma,b,\gamma}(|\hat{v}_{J,1}-v_{1}|\ge\epsilon)\le & \P\big(\kappa_{1}-\hat{\kappa}_{J,1}>\kappa_{1}-\hat{\mathcal{L}}(-v_{1}+\epsilon)\big)+\P\big(\hat{\kappa}_{J,1}-\kappa_{1}>\hat{\mathcal{L}}(-v_{1}-\epsilon)-\kappa_{1}\big)\\
\le & \sum_{y\in\{-\epsilon,+\epsilon\}}\P\big(|\hat{\kappa}_{J,1}-\kappa_{1}|+|\hat{\mathcal{L}}(-v_{1}+y)-\mathcal{L}_{\gamma}(-v_{1}+y)|>\delta\epsilon\big)\\
\le & 2\mathbb{P}_{\sigma,b}\big(|\hat{\kappa}_{J,1}-\kappa_{1}|>\frac{\delta\epsilon}{2}\big)+\sum_{y\in\{-\epsilon,+\epsilon\}}\mathbb{P}_{\gamma}\big(|\hat{\mathcal{L}}(-v_{1}+y)-\mathcal{L}_{\gamma}(-v_{1}+y)|>\frac{\delta\epsilon}{2}\big).
\end{align*}
By Propositions~\ref{prop:Bias error bounds} and \ref{prop:Variance error bounds}
and Markov's inequality the first probability is of the order $N^{-2s/(2s+3)}$
if $2^{J}\sim N^{1/(2s+3)}$. For the estimation error of $\hat{\mathcal{L}}$
Markov's inequality yields for any $y>0$
\begin{align*}
\mathbb{P}_{\gamma}\big(|\hat{\mathcal{L}}(y)-\mathcal{L}_{\gamma}(y)|>\delta\epsilon/2\big)\le & 2(\delta\epsilon)^{-2}\mathbb{E}_{\gamma}\big[|\hat{\mathcal{L}}(y)-\mathcal{L}_{\gamma}(y)|^{2}\big]\\
= & \frac{2}{N\delta^{2}\epsilon^{2}}\operatorname{Var}_{\gamma}\big(e^{-y\Delta_{1}}\big)\le\frac{2\mathcal{L}_{\gamma}(2y)}{N\delta^{2}\epsilon^{2}}.
\end{align*}
Therefore, 
\begin{equation}
\P(|\hat{v}_{J,1}-v_{1}|\ge\epsilon)\lesssim N^{-2s/(2s+3)}.\label{eq:ConsistV1}
\end{equation}

\emph{Step 2:} To determine the rate of $\hat{v}_{J,1}$, we use a
Taylor expansion which yields for some intermediate point $\xi$ between
$-v_{1}$ and $-\hat{v}_{J,1}$ 
\[
\hat{\kappa}_{J,1}=\hat{\mathcal{L}}(-\hat{v}_{J,1})=\hat{\mathcal{L}}(-v_{1})+(v_{1}-\hat{v}_{J,1})\hat{\mathcal{L}}'(\xi).
\]
Since on the other hand we have $\hat{\kappa}_{J,1}=\mathcal{L}_{\gamma}(-v_{1})+\hat{\kappa}_{J,1}-\kappa_{1}$,
we conclude
\[
v_{1}-\hat{v}_{J,1}=\frac{\mathcal{L}_{\gamma}(-v_{1})-\hat{\mathcal{L}}(-v_{1})+\hat{\kappa}_{J,1}-\kappa_{1}}{\hat{\mathcal{L}}'(\xi)},
\]
provided the denominator can be uniformly bounded with high probability.
By (\ref{eq:ConsistV1}) the event $\mathcal{T}_{3,1}:=\{|\hat{v}_{J,1}-v_{1}|<\epsilon\}$
has at least the probability $1-cN^{-2s/(2s+3)}$ for some $c>0$.
On $\mathcal{T}_{3,1}$ we have
\[
|\hat{\mathcal{L}}'(\xi)|\ge\inf_{|y+v_{1}|<\epsilon}\mathcal{L}'_{\gamma}(y)-\sup_{|y+v_{1}|<\epsilon}\big|\hat{\mathcal{L}}'(y)-\mathcal{L}'_{\gamma}(y)\big|.
\]
With $\delta$ from (\ref{eq:Lderiv}) we conclude that $|\hat{\mathcal{L}}'(\xi)|\ge\delta/2$
on the event $\mathcal{T}_{3,2}:=\{\sup_{y\in[-v_{1}-\epsilon,-v_{1}+\epsilon]}\big|\hat{\mathcal{L}}'(y)-\mathcal{L}_{\gamma}'(y)\big|^{2}<\delta/2\}$.
Note that in $\mathcal{T}_{3,2}$ we take the supremum of the empirical
processes related to $(\Delta_{n})_{n=1,\dots,N}$ acting on the function
set $\mathcal{F}:=\{[0,\infty)\ni x\mapsto xe^{-yx}:y\in[|v_{1}|-\epsilon,|v_{1}|+\epsilon]\}$.
 Since $\mathcal{F}$ is the multiplication of the identity map with
the transition class $\{e^{-yx}:y>0\}$), $\mathcal{F}$ is a Vapnik-\v{C}ervonenkis
class and admits the constant envelope function $(|v_{1}|-\epsilon)^{-1}e^{-1}$.
The empirical process theory (e.g., \citet{vanderVaartWellner1996},
Thm. 2.14.1) yields
\[
\mathbb{E}_{\gamma}\Big[\sup_{y\in[-v_{1}-\epsilon,-v_{1}+\epsilon]}\big|\hat{\mathcal{L}}'(y)-\mathcal{L}_{\gamma}'(y)\big|^{2}\Big]\lesssim\frac{1.}{N(|v_{1}|-\epsilon)^{2}}
\]
and by Markov's inequality $\mathbb{P}_{\gamma}(\Omega\setminus\mathcal{T}_{3,2})\lesssim1/N$.
With $\mathcal{T}_{3}:=\mathcal{T}_{3,1}\cap\mathcal{T}_{3,2}\cap\mathcal{T}_{2}$
we finally obtain
\begin{align*}
\E[\I_{\mathcal{T}_{3}}|v_{1}-\hat{v}_{J,1}|^{2}] & \le2\E\Big[\I_{\mathcal{T}_{3}}\frac{|\mathcal{L}_{\gamma}(-v_{1})-\hat{\mathcal{L}}(-v_{1})|^{2}+|\bar{\kappa}_{1}-\kappa_{1}|^{2}}{|\hat{\mathcal{L}}'(\xi)|^{2}}\Big]\\
 & \lesssim N^{-1}+\E\big[\I_{\mathcal{T}_{3}}|\hat{\kappa}_{J,1}-\kappa_{1}|^{2}\big]\lesssim N^{-2s/(2s+3)}.\tag*{{\qedhere}}
\end{align*}
\end{proof}
\begin{cor}
\label{cor:v=000026u}Choosing $2^{J}\sim N^{1/(2s+3)}$, there exist
an event $\T_{4}=\mathcal{T}_{0}\cap\mathcal{T}_{3}$ of high probability,
i.e. $\P\left(\Omega\setminus\T_{4}\right)\lesssim N^{-2s/(2s+3)}$,
such that the estimators $\hat{\mu}_{J}$ and $\hat{v}_{J,1}$ are
uniformly bounded on $\T_{4}$. Furthermore, for $N$ big enough,
we have uniformly on $\Theta_{s}$ and $\Gamma$
\begin{eqnarray*}
\E\Big[\I_{\T_{4}}\cdot\Big(\big|v_{1}-\hat{v}_{J,1}\big|^{2}+\big\| u_{1}-\hat{u}_{J,1}\big\|_{H^{1}}^{2}\Big)\Big] & \lesssim & N^{-2s/(2s+3)}.
\end{eqnarray*}
\end{cor}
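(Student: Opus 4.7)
The plan is to assemble the corollary by taking the intersection of the two high-probability events already controlled in Proposition~\ref{prop:Invariant measure bound} and Lemma~\ref{lem:vHat}, and then combining the deterministic bias bound from Proposition~\ref{prop:Bias error bounds} with the stochastic bound from Proposition~\ref{prop:Variance error bounds}, after converting $L^{2}$-control to $H^{1}$-control by Bernstein's inequality on the finite-dimensional space $V_{J}$.

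First I set $\T_{4}:=\T_{0}\cap\T_{3}$. A union bound gives
\[
\P(\Omega\setminus\T_{4})\le\P(\Omega\setminus\T_{0})+\P(\Omega\setminus\T_{3})\lesssim N^{-2s/(2s+3)},
\]
since each summand satisfies this rate under the choice $2^{J}\sim N^{1/(2s+3)}$ (the first by Proposition~\ref{prop:Invariant measure bound}, the second by Lemma~\ref{lem:vHat}). Uniform boundedness of $\hat{\mu}_{J}$ on $\T_{0}$, hence on $\T_{4}$, is the very definition of $\T_{0}$, which caps $\hat{\mu}_{J}$ by $2\sup\mu$; uniform boundedness of $\hat{v}_{J,1}$ on $\T_{3}$ is the final assertion of Lemma~\ref{lem:vHat}.

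For the risk bound on $\hat{v}_{J,1}$ I use $\I_{\T_{4}}\le\I_{\T_{3}}$ and invoke Lemma~\ref{lem:vHat} directly. For the eigenfunction I apply the triangle inequality
\[
\|u_{1}-\hat{u}_{J,1}\|_{H^{1}}\le\|u_{1}-u_{J,1}\|_{H^{1}}+\|u_{J,1}-\hat{u}_{J,1}\|_{H^{1}},
\]
splitting the error into a bias and a stochastic piece. The bias piece is bounded by $2^{-Js}$ uniformly on $\Theta_{s}$ through Proposition~\ref{prop:Bias error bounds}. For the stochastic piece, since $u_{J,1}-\hat{u}_{J,1}\in V_{J}$, Bernstein's inequality (\ref{eq:bernsteinJackson}) yields $\|u_{J,1}-\hat{u}_{J,1}\|_{H^{1}}\lesssim2^{J}\|u_{J,1}-\hat{u}_{J,1}\|_{L^{2}}$. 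Squaring, taking expectation and using Proposition~\ref{prop:Variance error bounds} (valid since $\T_{4}\subset\T_{2}$) gives a bound of order $2^{2J}\cdot N^{-1}2^{J}=N^{-1}2^{3J}$. Under the choice $2^{J}\sim N^{1/(2s+3)}$ both contributions $2^{-2Js}$ and $N^{-1}2^{3J}$ equal $N^{-2s/(2s+3)}$, matching the claimed rate.

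I do not anticipate a substantive obstacle: the single point needing care is the upgrade from the $L^{2}$-bound of Proposition~\ref{prop:Variance error bounds} to an $H^{1}$-bound, but since the discrepancy $u_{J,1}-\hat{u}_{J,1}$ lies in the finite-dimensional approximation space $V_{J}$, the Bernstein estimate applies directly and costs only the expected factor $2^{J}$ which is already absorbed in the budget $N^{-1}2^{3J}=N^{-2s/(2s+3)}$. All remaining ingredients are assembled directly from earlier results in this section.
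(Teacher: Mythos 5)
Your proposal is correct and follows essentially the same route as the paper: intersect the high-probability events $\T_{0}$ and $\T_{3}\subset\T_{2}$, use Proposition~\ref{prop:Bias error bounds} for the ($H^{1}$) bias term and Lemma~\ref{lem:vHat} for the eigenvalue, and upgrade the $L^{2}$ stochastic bound of Proposition~\ref{prop:Variance error bounds} to $H^{1}$ via Bernstein's inequality on $V_{J}$, with both contributions balancing at $N^{-2s/(2s+3)}$ under $2^{J}\sim N^{1/(2s+3)}$. The paper's own proof is a terse two-sentence version of exactly this argument; your write-up merely makes the implicit $2^{J}$-factor from the $H^{1}$ upgrade explicit.
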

\begin{proof}
Note that $\T_{4}$ is a subset of the events from Proposition \ref{prop:Variance error bounds},
Lemma~\ref{lem:vHat} and the event that $\hat{\mu}_{J}$ is uniformly
bounded from below and above (see Proposition \ref{prop:Invariant measure bound}).
Then $\T_{4}$ is a high probability event and by Propositions \ref{prop:Bias error bounds}
and \ref{prop:Variance error bounds}, the choice $2^{J}\sim N^{1/(2s+3)}$
yields the claimed bound of the expectation.
\end{proof}
Before we present the proof of Theorem \ref{thm:Upper bounds} we
need to another representation of the volatility estimator which allows
us to bound the derivative of the estimated eigenfunction.
\begin{lem}
\label{lem:sigmaTilde}Set $0<a<b<1.$ There exists a high probability
event $\T_{5}\subset\mathcal{T}_{4}$, $\P\left(\Omega\setminus\T_{5}\right)\lesssim N^{-2s/(2s+3)}$
such that
\[
\I_{\T_{5}}\cdot\hat{\sigma}_{J}^{2}(x)=\I_{\T_{5}}\cdot\frac{2\hat{v}_{J,1}\int_{0}^{x}\hat{u}_{J,1}(y)\hat{\mu}_{J}(y)dy}{(\hat{u}_{J,1}'(x)\vee c'_{a,b})\hat{\mu}_{J}(x)}\wedge D
\]
for a deterministic constant $c'_{a,b}>0$ satisfying $c'_{a,b}\leq c_{a,b}\leq\inf_{x\in[a,b]}u'_{1}(x).$\end{lem}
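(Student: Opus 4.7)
The plan is to choose the event $\mathcal{T}_5\subset\mathcal{T}_4$ as the sub-event on which the estimated eigenfunction inherits the strict positivity of $u_1'$ on $[a,b]$, so that the truncation at $c_{a,b}'$ in the denominator becomes vacuous and the two expressions for $\hat{\sigma}_J^2$ agree. Concretely, I set $c_{a,b}':=c_{a,b}/2$ with $c_{a,b}$ from (\ref{eq:lower bound for derivative of eigenfunction}), and define
\[
\mathcal{T}_5:=\mathcal{T}_4\cap\Big\{\|\hat{u}_{J,1}'-u_1'\|_{L^{\infty}([a,b])}\le c_{a,b}/2\Big\}.
\]
On $\mathcal{T}_5$, the triangle inequality gives $\hat{u}_{J,1}'(x)\ge u_1'(x)-c_{a,b}/2\ge c_{a,b}'$ for every $x\in[a,b]$, hence $\hat{u}_{J,1}'(x)\vee c_{a,b}'=\hat{u}_{J,1}'(x)$ and the two formulas for $\hat{\sigma}_J^2(x)$ coincide.

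It then remains to bound $\mathbb{P}(\Omega\setminus\mathcal{T}_5)$, which by Markov's inequality reduces to estimating $\mathbb{E}[\I_{\mathcal{T}_4}\|\hat{u}_{J,1}'-u_1'\|_{L^{\infty}([a,b])}^{2}]$. The main obstacle is that Corollary~\ref{cor:v=000026u} only controls $\|\hat{u}_{J,1}-u_1\|_{H^1}$, whereas I need uniform control of the derivative. I would decompose
\[
\hat{u}_{J,1}-u_1=(\hat{u}_{J,1}-\pi_J u_1)+(\pi_J u_1-u_1)
\]
and treat the two summands separately. For the first, since $\hat{u}_{J,1}-\pi_J u_1\in V_J$, the Bernstein inequality (\ref{eq:bernsteinJackson}) together with the one-dimensional Sobolev embedding $H^2\hookrightarrow W^{1,\infty}$ yields
\[
\|(\hat{u}_{J,1}-\pi_J u_1)'\|_{\infty}\lesssim 2^{2J}\|\hat{u}_{J,1}-\pi_J u_1\|_{L^2},
\]
and the $L^2$-term is handled via the triangle inequality using Propositions~\ref{prop:Bias error bounds} and \ref{prop:Variance error bounds}. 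For the second, I would apply Jackson's inequality and the same Sobolev embedding to obtain $\|(\pi_J u_1-u_1)'\|_{\infty}\lesssim\|(I-\pi_J)u_1\|_{H^2}\lesssim 2^{-J(s-1)}\|u_1\|_{H^{s+1}}$, which is uniformly bounded over $\Theta_s$ thanks to (\ref{eq:eigenfunctions bounds}).

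Combining the two estimates with $2^J\sim N^{1/(2s+3)}$, every term produces a power of $N$ strictly smaller than $1$, which by Markov's inequality gives the required $\mathbb{P}(\Omega\setminus\mathcal{T}_5)\lesssim N^{-2s/(2s+3)}$. The delicate point is the trade-off between the $L^2$-gain $2^{-J(s+1)}$ in the approximation of the eigenfunction from Proposition~\ref{prop:Bias error bounds} and the Bernstein loss $2^{2J}$ when passing from $L^2$ to $W^{1,\infty}$: the rate derived in Proposition~\ref{prop:Bias error bounds} is precisely what allows this trade-off to succeed while matching the target probability bound.
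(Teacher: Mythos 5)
There is a genuine gap in the probability bound, and it is precisely the point the lemma is designed to circumvent. Your event $\mathcal{T}_5=\mathcal{T}_4\cap\{\|\hat{u}_{J,1}'-u_1'\|_{L^{\infty}([a,b])}\le c_{a,b}/2\}$ does make the identity trivially true, but you cannot bound $\P(\Omega\setminus\mathcal{T}_5)$ by $N^{-2s/(2s+3)}$ along the route you describe. Following your own estimates: Proposition~\ref{prop:Variance error bounds} gives $\E[\I_{\mathcal{T}_4}\|\hat{u}_{J,1}-u_{J,1}\|_{L^2}^2]\lesssim N^{-1}2^{J}$, and passing to $\|(\cdot)'\|_{\infty}$ via Bernstein's inequality (\ref{eq:bernsteinJackson}) and the Sobolev embedding costs a factor $2^{4J}$ on the squared norm (at best $2^{(3+2\epsilon)J}$ with a sharper embedding). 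With $2^{J}\sim N^{1/(2s+3)}$ this yields
\begin{align*}
\E\big[\I_{\mathcal{T}_4}\|\hat{u}_{J,1}'-u_1'\|_{L^{\infty}([a,b])}^2\big]\lesssim N^{-1}2^{5J}+2^{-2J(s-1)}\sim N^{-2(s-1)/(2s+3)},
\end{align*}
so Markov's inequality only gives $\P(\Omega\setminus\mathcal{T}_5)\lesssim N^{-2(s-1)/(2s+3)}$, strictly weaker than the claimed $N^{-2s/(2s+3)}$. This loss is not cosmetic: in the proof of Theorem~\ref{thm:Upper bounds} the complement of $\mathcal{T}_5$ is handled by boundedness of $\sigma^2$ and $\hat{\sigma}_J^2$, so a probability bound of order $N^{-2(s-1)/(2s+3)}$ would degrade the volatility rate to $N^{-2(s-1)/(2s+3)}$. (Compare the event $\mathcal{T}_6$ used for the drift, where exactly such a condition $\inf_{[a,b]}\hat{u}_{J,1}'\ge c_{a,b}/2$ is imposed and the weaker probability bound suffices because the drift rate is only $N^{-2(s-1)/(2s+3)}$ anyway.)

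The paper's proof avoids any lower bound on $\hat{u}_{J,1}'$ altogether. It uses the algebraic identity $\frac{A}{t}\wedge D=\frac{A}{t\vee(A/D)}$ for $A>0$, so the truncation $\wedge D$ already built into the definition (\ref{eq:Sigma est definition}) implicitly truncates the denominator from below at the \emph{random} level $A/D=\frac{2\hat{v}_{J,1}\int_0^x\hat{u}_{J,1}\hat{\mu}_J}{D\hat{\mu}_J(x)}$. The event $\mathcal{T}_5$ is then defined by requiring that $\hat{v}_{J,1}\int_0^x\hat{u}_{J,1}(y)\hat{\mu}_J(y)\,dy$ be uniformly close to $v_1\int_0^x u_1(y)\mu(y)\,dy=\tfrac12\sigma^2(x)u_1'(x)\mu(x)\ge \tfrac12 d^2c_{a,b}\inf\mu$ on $[a,b]$, which forces $A/D\ge c_{a,b}':=\frac{d^2c_{a,b}m}{2MD}$. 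The decisive advantage is that this sup-norm closeness involves $\hat{u}_{J,1}$ only through an integral, so by Cauchy--Schwarz (estimate (\ref{eq:E1})) it is controlled by $\|\mu-\hat{\mu}_J\|_{L^2}+\|u_1-\hat{u}_{J,1}\|_{L^2}+|v_1-\hat{v}_{J,1}|$, whose squared expectation is $\lesssim N^{-2s/(2s+3)}$ by Corollary~\ref{cor:v=000026u} --- no Bernstein loss, hence the correct probability. Your closing claim that ``every term produces a power of $N$'' matching the target is therefore the step that fails.
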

\begin{proof}
Recall that 
\[
\hat{\sigma}_{J}^{2}(x)=\frac{2\hat{v}_{J,1}\int_{0}^{x}\hat{u}_{J,1}(y)\hat{\mu}_{J}(y)dy}{\hat{u}_{J,1}'(x)\hat{\mu}_{J}(x)}\wedge D=\frac{2\hat{v}_{J,1}\int_{0}^{x}\hat{u}_{J,1}(y)\hat{\mu}_{J}(y)dy}{\hat{\mu}_{J}(x)\big(\hat{u}_{J,1}'(x)\vee\frac{2\hat{v}_{J,1}\int_{0}^{x}\hat{u}_{J,1}(y)\hat{\mu}_{J}(y)dy}{\hat{\mu}_{J}(x)D}\big)}.
\]
Let $m=\frac{1}{2}\inf\mu(x)$ and $M=2\sup\hat{\mu}_{J}$. By Proposition~\ref{prop:Invariant measure bound}
$m\le\hat{\mu}_{J}(x)\le M$ for all $x\in[0,1]$ on the event $\mathcal{T}_{0}$.
This event is especially contained in 
\[
\T_{5}:=\T_{4}\cap\Big\{4\big\|\hat{v}_{J,1}\int_{0}^{x}\hat{u}_{J,1}(y)\hat{\mu}_{J}(y)dy-v_{1}\int_{0}^{x}u_{1}(y)\mu(y)dy\big\|_{\infty}\leq d^{2}c_{a,b}m\Big\},
\]
where $\T_{4}$ is the high probability event from Corollary \ref{cor:v=000026u}.
On $\T_{5}$ it holds
\begin{eqnarray*}
\frac{2\hat{v}_{J,1}\int_{0}^{x}\hat{u}_{J,1}(y)\hat{\mu}_{J}(y)dy}{D\hat{\mu}_{J}(x)} & \geq & \frac{2v_{1}\int_{0}^{x}u_{1}(y)\mu(y)dy-2|\hat{v}_{J,1}\int_{0}^{x}\hat{u}_{J,1}(y)\hat{\mu}_{J}(y)dy-v_{1}\int_{0}^{x}u_{1}(y)\mu(y)dy|}{D\hat{\mu}_{J}(x)}\\
 & = & \frac{\sigma^{2}(x)u_{1}'(x)\mu(x)-2|\hat{v}_{J,1}\int_{0}^{x}\hat{u}_{J,1}(y)\hat{\mu}_{J}(y)dy-v_{1}\int_{0}^{x}u_{1}(y)\mu(y)dy|}{D\hat{\mu}_{J}(x)}\\
 & \geq & \frac{d^{2}c_{a,b}m}{2MD}=:c'_{a,b}.
\end{eqnarray*}
Furthermore, by Corollary \ref{cor:v=000026u}, using Markov and triangle
inequalities, it is easy to check that $\P(\Omega\setminus\T_{5})\lesssim N^{-\frac{2s}{2s+3}}$,
cf. estimate (\ref{eq:E1}) below.
\end{proof}

\begin{proof}[Proof for the volatility estimator]
 Set $0<a<b<1.$ Note first that since $\P\left(\Omega\setminus\T_{5}\right)\lesssim N^{-\frac{2s}{2s+3}}$
and $\sigma,\hat{\sigma}$ are bounded we just have to verify that
$\E[\I_{\T_{5}}\cdot\|\sigma^{2}-\hat{\sigma}^{2}\|_{L^{2}([a,b])}^{2}]\lesssim N^{-\frac{2s}{2s+3}}.$
Denote $\tilde{u}_{J,1}'(x)=\hat{u}_{J,1}'(x)\vee c'_{a,b}$ and $\tilde{\sigma}_{J}^{2}(x)=\frac{2\hat{v}_{J,1}\int_{0}^{x}\hat{u}_{J,1}(y)\hat{\mu}_{J}(y)dy}{\tilde{u}_{J,1}'(x)\hat{\mu}_{J}(x)}$.
Since for $x\in[a,b]$ the functions $u_{1}'$ and $\mu$ are uniformly
separated from zero, we have that on $\T_{5}$ 
\begin{align*}
\big|\sigma^{2}(x)-\hat{\sigma}_{J}^{2}(x)\big| & \leq\Big|\frac{2v_{1}\int_{0}^{x}u_{1}(y)\mu(y)dy}{u_{1}'(x)\mu(x)}-\frac{2\hat{v}_{J,1}\int_{0}^{x}\hat{u}_{J,1}(y)\hat{\mu}_{J}(y)dy}{\tilde{u}_{J,1}'(x)\hat{\mu}_{J}(x)}\Big|\\
 & =\Big|\frac{2\left(v_{1}\int_{0}^{x}u_{1}(y)\mu(y)dy-\hat{v}_{J,1}\int_{0}^{x}\hat{u}_{J,1}(y)\hat{\mu}_{J}(y)dy\right)}{u_{1}'(x)\mu(x)}-\frac{\tilde{\sigma}_{J}^{2}(x)\left(u_{1}'(x)\mu(x)-\tilde{u}_{J,1}'(x)\hat{\mu}_{J}(x)\right)}{u'_{1}(x)\mu(x)}\Big|\\
 & \lesssim\left|v_{1}\int_{0}^{x}u_{1}(y)\mu(y)dy-\hat{v}_{J,1}\int_{0}^{x}\hat{u}_{J,1}(y)\hat{\mu}_{J}(y)dy\right|+|\tilde{\sigma}_{J}^{2}(x)|\Big|\frac{u_{1}'(x)\mu(x)-\tilde{u}_{J,1}'(x)\hat{\mu}_{J}(x)}{u_{1}'(x)}\Big|\\
 & =:A_{1}(x)+A_{2}(x).
\end{align*}
Observe that since $\hat{\mu}_{J}$ is uniformly bounded on the event
$\T_{5}$ and since the eigenfunction $\hat{u}_{1}$ is normalized
the Cauchy-Schwarz inequality grants that $\int_{0}^{x}\hat{u}_{J,1}(y)\hat{\mu}_{J}(y)dy$
is uniformly bounded. Hence, 
\begin{align}
A_{1}(x) & =\big|v_{1}\big(\int_{0}^{x}u_{1}(y)\mu(y)dy-\int_{0}^{x}\hat{u}_{J,1}(y)\hat{\mu}_{J}(y)dy\big)+\int_{0}^{x}\hat{u}_{J,1}(y)\hat{\mu}_{J}(y)dy(v_{1}-\hat{v}_{J,1})\big|\nonumber \\
 & \lesssim\big|\int_{0}^{x}u_{1}(y)\mu(y)dy-\int_{0}^{x}\hat{u}_{J,1}(y)\hat{\mu}_{J}(y)dy\big|+|v_{1}-\hat{v}_{J,1}|\nonumber \\
 & \lesssim\big|\int_{0}^{x}u_{1}(y)(\mu(y)-\hat{\mu}_{J,1}(y))dy\big|+\big|\int_{0}^{x}(u_{1}(y)-\hat{u}_{J,1}(y))\hat{\mu}_{J}(y)dy\big|+|v_{1}-\hat{v}_{1,J}|\nonumber \\
 & \leq\|u_{1}\|_{L^{2}}\left\Vert \mu-\hat{\mu}_{J}\right\Vert _{L^{2}}+\|u_{1}-\hat{u}_{J,1}\|_{L^{2}}\big\|\hat{\mu}_{J}\big\|_{L^{2}}+|v_{1}-\hat{v}_{1,J}|\nonumber \\
 & =\left\Vert \mu-\hat{\mu}_{J}\right\Vert _{L^{2}}+\|u_{1}-\hat{u}_{J,1}\|_{L^{2}}+|v_{1}-\hat{v}_{J,1}|.\label{eq:E1}
\end{align}
Furthermore, since $\tilde{\sigma}_{J}^{2}(x)$ is uniformly bounded
on $\T_{5}$ 
\begin{align}
A_{2}(x) & \lesssim|\mu(x)-\hat{\mu}_{J}(x)|+\frac{|\hat{\mu}_{J}(x)|}{|u_{1}'(x)|}|u_{1}'(x)-\tilde{u}_{J,1}'(x)|\nonumber \\
 & \lesssim|\mu(x)-\hat{\mu}_{J}(x)|+|u_{1}'(x)-\tilde{u}_{J,1}'(x)|\nonumber \\
 & \lesssim|\mu(x)-\hat{\mu}_{J}(x)|+|u_{1}'(x)-\hat{u}_{J,1}'(x)|.\label{eq:E2}
\end{align}
Consequently,
\begin{align*}
\E\Big[\I_{\T_{5}}\cdot\big\|\sigma^{2}-\hat{\sigma}_{J}^{2}\big\|_{L^{2}}^{2}\Big] & \lesssim\E\left[\I_{\T_{5}}\cdot\big(\|A_{1}\|_{L^{2}}^{2}+\|A_{2}\|_{L^{2}}^{2}\big)\right]\\
 & \lesssim\E\left[\I_{\T_{5}}\cdot\left(\left\Vert \mu-\hat{\mu}_{J}\right\Vert _{L^{2}}^{2}+\|u_{1}-\hat{u}_{J,1}\|_{H^{1}}^{2}+\left|v_{1}-\hat{v}_{J,1}\right|^{2}\right)\right]\\
 & \lesssim N^{-2s/(2s+3)}.\tag*{{\qedhere}}
\end{align*}

\end{proof}

\begin{proof}[Proof for the drift estimator]
 To obtain the upper bound on the drift term first note that using
Bernstein's inequality we can extend the proofs of Propositions \ref{prop:Bias error bounds}
and \ref{prop:Variance error bounds} to obtain
\begin{equation}
\E\Big[\I_{\T_{4}}\cdot\left\Vert u_{1}-\hat{u}\right\Vert _{H^{2}}^{2}\Big]\lesssim N^{-\frac{2(s-1)}{2s+3}}.\label{eq:ef H2 error}
\end{equation}
Let $\T_{6}=\T_{5}\cap\{\inf_{x\in[a,b]}\hat{u}'_{J,1}(x)\geq c_{a,b}/2\}\cap\{\|\hat{u}_{J,1}\|_{H^{2}}\leq2\|u_{1}\|_{H^{2}}\}$.
By Lemma~\ref{lem:sigmaTilde} and (\ref{eq:ef H2 error}) we obtain
that $\P(\Omega\setminus\T_{6})\lesssim N^{-\frac{2(s-1)}{2s+1}}$.
Since both $b$ and $\hat{b}$ are bounded in $L^{2}$, we can restrict
the error analysis to the high probability event $\T_{6}$. Recall
the definition of $\tilde{b}$ from (\ref{eq:drift est definition}).
Since $\|b\|_{L^{2}([a,b])}\leq D$ we have $\|\hat{b}_{J}-b\|_{L^{2}([a,b])}\leq\|\tilde{b}_{J}-b\|_{L^{2}([a,b])}$.
Consequently, it remains to show 
\[
\E\Big[\I_{\T_{6}}\cdot\|\tilde{b}_{J}-b\|_{L^{2}([a,b])}\Big]\lesssim N^{-\frac{2(s-1)}{2s+3}}.
\]
 On $\T_{6}$, for $x\in[a,b]$ we have
\begin{align*}
|\tilde{b}_{J}(x)-b(x)|\leq & \Big|\frac{\hat{v}_{J,1}\hat{u}_{J,1}(x)}{\hat{u}_{J,1}'(x)}-\frac{\tilde{\sigma}_{J}^{2}(x)\hat{u}_{J,1}''(x)}{2\hat{u}_{J,1}'(x)}-\frac{v_{1}u_{1}(x)}{u_{1}'(x)}+\frac{\sigma^{2}(x)u_{1}''(x)}{2u_{1}'(x)}\Big|\\
\le & |u_{1}'(x)|^{-1}\Big|\hat{v}_{J,1}\hat{u}_{J,1}(x)-v_{1}u_{1}(x)+\frac{\sigma^{2}(x)}{2}u_{1}''(x)-\frac{\tilde{\sigma}_{J}^{2}(x)}{2}\hat{u}_{J,1}''(x)\Big|\\
 & +\frac{|\tilde{b}_{J}(x)|}{|u_{1}'(x)|}\big|u_{1}'(x)-\hat{u}_{J,1}'(x)\big|.
\end{align*}
The uniform lower bound on $|u_{1}'|$ yields
\begin{align*}
\|\tilde{b}_{J}-b\|_{L^{2}([a,b])}^{2}\lesssim & \|\hat{v}_{J,1}\hat{u}_{J,1}-v_{1}u_{1}\|_{L^{2}([a,b])}^{2}+\|\tilde{\sigma}_{J}^{2}\hat{u}_{J,1}''-\sigma^{2}u_{1}''\|_{L^{2}([a,b])}^{2}\\
 & \quad+\|\tilde{b}_{J}\|_{L^{2}([a,b])}^{2}\|\hat{u}_{J,1}'-u_{1}'\|_{L^{\infty}([a,b])}^{2}\\
=: & B_{1}+B_{2}+B_{3}.
\end{align*}
We will estimate these three terms separately. Corollary~\ref{cor:v=000026u}
and the normalization of $\hat{u}_{J,1}$ yield
\[
\E[\mathbf{1}_{\mathcal{T}_{6}}B_{1}]\le\E\big[\mathbf{1}_{\mathcal{T}}\big(|\hat{v}_{J,1}-v_{1}|^{2}\|\hat{u}_{J,1}\|_{L^{2}}^{2}+|v_{1}|^{2}\|\hat{u}_{J,1}-u_{1}\|_{L^{2}}^{2}\big)\big]\lesssim N^{-2s/(2s+3)}.
\]
The second term can be decomposed into
\[
B_{2}\le2\|\tilde{\sigma}_{J}^{2}-\sigma^{2}\|_{\infty}^{2}\|u_{1}''\|_{L^{2}}^{2}+2\|\tilde{\sigma}_{J}^{2}\|_{\infty}^{2}\|\hat{u}_{J,1}''-u_{1}''\|_{L^{2}}^{2}.
\]
From (\ref{eq:E1}) and (\ref{eq:E2}) we can easily verify that
\[
\|\hat{\sigma}_{J}^{2}-\sigma^{2}\|_{\infty}\lesssim|\hat{v}_{J,1}-v_{1}|+\|\hat{u}_{J,1}-u_{1}\|_{H^{2}}+\|\hat{\mu}_{J}-\mu\|_{H^{1}}.
\]
Since $\hat{\sigma}_{J}^{2}$ is bounded by construction, we conclude
\[
\E[\mathbf{1}_{\mathcal{T}_{6}}B_{2}]\le\E\big[\mathbf{1}_{\mathcal{T}_{6}}\big(|\hat{v}_{J,1}-v_{1}|^{2}+\|\hat{u}_{J,1}-u_{1}\|_{H^{2}}^{2}++\|\hat{\mu}_{J}-\mu\|_{H^{1}}^{2}\big)\big]\lesssim N^{-2(s-1)/(2s+3)}.
\]
For the last term it holds
\[
\E[\mathbf{1}_{\mathcal{T}_{6}}B_{3}]\le\E\big[\mathbf{1}_{\mathcal{T}_{6}}\|\tilde{b}_{J}\|_{L^{2}([a,b])}^{2}\|\hat{u}_{J,1}-u_{1}\|_{H^{2}}^{2}\big]\lesssim N^{-2(s-1)/(2s+3)}
\]
since $\|\tilde{b}_{J}\|_{L^{2}[(a,b)]}$ is uniformly bounded on
$\T_{6}$.
\end{proof}

\section{Proof of the lower bounds\label{sec:PrLowBound}}

First note that estimating the sampling distribution $\gamma$ has
no impact on the convergence rates, because the Laplace transform
can be estimated with the parametric rate. Therefore, it suffices
to use the same distribution $\gamma\in\Gamma$ for all alternatives.
Throughout this section we thus fix some $\gamma\in\Gamma$ which
admits a bounded Lebesgue density on $[0,T]$ for some $T>0$. 

Without loss of generality we can suppose that $(1,0)\in\Theta_{s}.$
To construct the alternatives, let $\psi$ be a compactly supported
wavelet in $H^{s}$ with one vanishing moment. We set $\psi_{jk}(x)=2^{j/2}\psi(2^{j}x-k)$
and denote by $K_{j}\subset\Z$ a maximal set of indices $k$ such
that $\text{supp}(\psi_{jk})\subset[a,b]$ and $\text{supp}(\psi_{jk})\cap\text{supp}(\psi_{jk'})=\emptyset$
holds for all $k,k'\in K_{j}$, $k\neq k'.$ For a constant $\delta>0$
and all $\epsilon=(\epsilon_{k})\in\left\{ -1,1\right\} ^{|K_{j}|}$
we define
\[
S_{\epsilon}(x)=S_{\epsilon}(j,x)=\Big(2+\delta\sum_{k\in K_{j}}\epsilon_{k}\psi_{jk}(x)\Big)^{-1}.
\]
Choosing $\delta\sim2^{-j(s+1/2)}$ yields $(\sqrt{2S_{\epsilon}},S_{\epsilon}')\in\Theta_{s}$.
The corresponding diffusions $X^{(\epsilon)}$ are defined by their
generators
\begin{eqnarray*}
L_{\epsilon}f(x) & = & S_{\epsilon}(x)f''(x)+S'_{\epsilon}(x)f'(x),\\
\text{dom}(L_{\epsilon}) & = & \text{dom}(L).
\end{eqnarray*}
Note that for any $\epsilon$ the invariant measure of $X^{(\epsilon)}$
is given by Lebesgue measure on $[0,1]$. For $\epsilon,\epsilon'$
with $\|\epsilon-\epsilon'\|_{\ell^{2}}=2$ we have
\[
S_{\epsilon'}(x)-S_{\epsilon}(x)=\pm2\delta\psi_{jk}(x)S_{\epsilon'}(x)S_{\epsilon}(x).
\]
Since $S_{\epsilon},S_{\epsilon'}$ converge uniformly to $1/2$ as
$j\to\infty$, the $L^{2}$-distances of the volatility functions
and the drift functions of the alternatives $\epsilon$ and $\epsilon'$
are bounded by
\[
\|2S_{\epsilon'}-2S_{\epsilon}\|_{L^{2}}\gtrsim\delta,\quad\|S'_{\epsilon'}-S'_{\epsilon}\|_{L^{2}}\gtrsim2^{j}\delta.
\]
Therefore, Assouad's lemma and $\delta\sim2^{-j(s+1/2)}$ yield for
all estimators $\bar{\sigma}^{2}$ and $\bar{b}$
\begin{align}
\sup_{(\sigma,b)\in\Theta_{s}}\E\Big[\|\bar{\sigma}^{2}-\sigma^{2}\|_{L^{2}([a,b])}^{2}\Big] & \gtrsim2^{j}\delta=2^{-2sj},\nonumber \\
\sup_{(\sigma,b)\in\Theta_{s}}\E\Big[\|\bar{b}-b\|_{L^{2}([a,b])}^{2}\Big] & \gtrsim2^{3j}\delta=2^{-2(s+1)j},\label{eq:lowerBound}
\end{align}
provided the Kullback-Leibler divergence between the distributions
of $(X_{\tau_{n}}^{(\epsilon)})_{n=0,\dots,N}$ and $(X_{\tau_{n}}^{(\epsilon')})_{n=0,\dots,N}$
remains uniformly bounded for all alternatives $\epsilon,\epsilon'$
with $\|\epsilon-\epsilon'\|_{\ell^{2}}=2$.

To bound the Kullback-Leibler divergence, we have to take into account
the random observation times. Denote the transition density of $(X_{t})_{t\ge0}$
by $p_{t}(x,y)dy=\mathbb{P}_{\sigma,b}(X_{t}=dy|X_{0}=x)$ for $x,y\in[0,1],t\ge0$.
By the independence of the observation time $\tau$ and the process
$X$ we have 
\begin{align*}
Rf(x) & =\E\left[f(X_{\tau})|X_{0}=x\right]=\int_{0}^{\infty}P_{t}f(x)\gamma(dt)=\int_{0}^{\infty}\int_{0}^{1}p_{t}(x,y)f(y)dy\gamma(dt).
\end{align*}
For one dimensional diffusions with bounded drift and differentiable
volatility, which is uniformly separated from zero, we know that

\[
p_{t}(x,y)\leq c_{0}\Big(1+\frac{1}{\sqrt{t}}\Big)
\]
with $c_{0}>0$ depending only on the bounds for the drift and volatility
(see \citet[Thm. 1]{QianZheng:2002}). The assumption $\mathbb{E}[\tau^{-1/2}]<\infty$
thus ensures that 
\[
r(x,y)=\int_{0}^{\infty}p_{t}(x,y)\gamma(dt)
\]
 is a well defined kernel of operator $R.$ We obtain the following
generalization of Proposition 6.4 in \citep{GobetHoffmannReiss:2004}:
\begin{lem}
\label{lem:ConvergenceOfProbabilities}Assume $\mathbb{E}_{\gamma}\left[\tau^{-1/2}\right]<\infty.$
If $(\sigma_{n},b_{n})\in\Theta_{s}$, $n\geq0$, such that 
\[
\lim_{n\to\infty}\left\Vert \sigma_{n}-\sigma_{0}\right\Vert _{\infty}=0\quad\text{and}\quad\lim_{n\to\infty}\left\Vert b_{n}-b_{0}\right\Vert _{\infty}=0,
\]
then the corresponding kernels $r^{(n)}(x,y)dy=\mathbb{P}_{\sigma_{n},b_{n}}(X_{\tau}\in dy|X_{0}=x)$
satisfy
\[
\lim_{n\to\infty}\big\| r^{(n)}-r^{(0)}\big\|_{\infty}=0.
\]

\end{lem}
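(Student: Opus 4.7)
The plan is to prove the lemma by dominated convergence applied to the integral representation
\[
r^{(n)}(x,y)-r^{(0)}(x,y)=\int_{0}^{\infty}\bigl(p^{(n)}_{t}(x,y)-p^{(0)}_{t}(x,y)\bigr)\,\gamma(dt),
\]
reducing the claim to a statement about pointwise (in $t$) uniform convergence of the transition densities.

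First, I would verify the existence of a $\gamma$-integrable dominating function. Because $(\sigma_{n},b_{n})\in\Theta_{s}$ uniformly in $n$, the bound from \citet[Thm.~1]{QianZheng:2002} is uniform in~$n$: there is $c_{0}>0$ with $\sup_{n}\sup_{x,y\in[0,1]}p^{(n)}_{t}(x,y)\le c_{0}\bigl(1+t^{-1/2}\bigr)$. The hypothesis $\mathbb{E}_{\gamma}[\tau^{-1/2}]<\infty$ then makes $t\mapsto c_{0}(1+t^{-1/2})$ integrable against $\gamma$, and in particular $\int_{0}^{\delta}(1+t^{-1/2})\gamma(dt)\to 0$ as $\delta\downarrow 0$.

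Second, and this is the technical heart, for every fixed $t>0$ I would establish
\[
\bigl\|p^{(n)}_{t}-p^{(0)}_{t}\bigr\|_{\infty}\xrightarrow[n\to\infty]{}0.
\]
The densities $p^{(n)}_{t}(x,\cdot)$ are the fundamental solutions of the Kolmogorov forward equation associated with $L_{\sigma_n,b_n}$ with Neumann boundary conditions at $\{0,1\}$. Writing the difference $q_{n}:=p^{(n)}_{t}-p^{(0)}_{t}$, a direct computation shows that $q_{n}$ satisfies an inhomogeneous linear parabolic equation of the same type, with source term controlled by $(\|\sigma_{n}^{2}-\sigma_{0}^{2}\|_{\infty}+\|b_{n}-b_{0}\|_{\infty})$ times second derivatives of $p^{(0)}_{\cdot}$, which are bounded on compact $t$-intervals separated from $0$. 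Combining Schauder-type interior and boundary estimates (uniform within $\Theta_{s}$, since the ellipticity constant $d>0$ and the coefficient bounds are uniform) with the maximum principle then yields $\|q_{n}\|_{\infty}\to 0$ on each such compact interval. Equivalently, by Arzel\`a--Ascoli one first obtains equicontinuity of the family $\{p^{(n)}_{t}:n\ge 0\}$ on $[0,1]^{2}$ for fixed $t>0$, and any subsequential uniform limit must coincide with $p^{(0)}_{t}$ (as can be checked against test functions via strong convergence of the semigroups, which is granted by Trotter--Kato applied to $L_{\sigma_n,b_n}$).

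Third, splitting the $\gamma$-integral at a small $\delta>0$, for any $\varepsilon>0$ I would choose $\delta$ so that $2c_{0}\int_{0}^{\delta}(1+t^{-1/2})\gamma(dt)<\varepsilon/2$ and conclude
\[
\bigl\|r^{(n)}-r^{(0)}\bigr\|_{\infty}\le \frac{\varepsilon}{2}+\int_{\delta}^{\infty}\bigl\|p^{(n)}_{t}-p^{(0)}_{t}\bigr\|_{\infty}\gamma(dt),
\]
where the remaining integral vanishes as $n\to\infty$ by dominated convergence on $[\delta,\infty)$, using the bound from the first step and the convergence from the second. The main obstacle is the second step: deriving uniform-in-$(x,y)$ convergence of transition densities of reflected diffusions from uniform convergence of the coefficients, where some care is needed at the boundary because of the Neumann condition; everything else is a routine application of dominated convergence.
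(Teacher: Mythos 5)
Your proposal is correct and follows essentially the same route as the paper: bound $\|r^{(n)}-r^{(0)}\|_{\infty}$ by $\int_{0}^{\infty}\|p_{t}^{(n)}-p_{t}^{(0)}\|_{\infty}\gamma(dt)$, use the heat-kernel bound $\|p_{t}^{(n)}\|_{\infty}\lesssim1+t^{-1/2}$ together with $\mathbb{E}_{\gamma}[\tau^{-1/2}]<\infty$ as the dominating function, and conclude by dominated convergence once $\|p_{t}^{(n)}-p_{t}^{(0)}\|_{\infty}\to0$ for each fixed $t>0$. The only difference is that the paper dispatches this last (and, as you rightly note, central) step by citing Proposition 6.4 of Gobet, Hoffmann and Rei{\ss} (2004), whereas you sketch a direct parabolic-PDE argument for it; your sketch is plausible but unnecessary given that reference.
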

Note that the bounded Lebesgue density $\gamma$ near the origin specially
ensures that $\mathbb{E}_{\gamma}[\tau^{-1/2}]<\infty$. 
\begin{proof}
Due to the bound $\|p_{t}^{(n)}(\cdot,\cdot)\|_{\infty}\lesssim1+t^{-1/2}$,
dominated convergence yields
\begin{eqnarray*}
\big\| r^{(n)}-r^{(0)}\big\|_{\infty} & = & \sup_{x,y\in[0,1]}\Big|\int_{0}^{\infty}\left(p_{t}^{(n)}(x,y)-p_{t}^{(0)}(x,y)\right)\gamma(dt)\Big|\\
 & \leq & \int_{0}^{\infty}\big\| p_{t}^{(n)}-p_{t}^{(0)}\big\|_{\infty}\gamma(dt).
\end{eqnarray*}
By \citep[Prop. 6.4]{GobetHoffmannReiss:2004} this tends to zero.
\end{proof}
Exactly as in \citep[Sect. 5.2]{GobetHoffmannReiss:2004}, this lemma
allows us to bound the Kullback-Leibler divergence by $N\|r_{\epsilon'}-r_{\epsilon}\|_{L^{2}([0,1]^{2})}^{2}$
for kernels $r_{\epsilon'}$ and $r_{\epsilon}$ of $R_{\epsilon'}$
and $R_{\epsilon}$, respectively, for any $\epsilon,\epsilon'$ with
$\|\epsilon-\epsilon'\|_{\ell^{2}}=2$. Note that $\left\Vert r_{\epsilon'}-r_{\epsilon}\right\Vert _{L^{2}([0,1]^{2})}$
is the Hilbert-Schmidt norm distance $\|R-R^{\epsilon'}\|_{HS}=\|(R^{\epsilon}-R^{\epsilon'})|_{V}\|_{HS}$
where 
\[
V=\Big\{ f\in L^{2}([0,1])\Big|\int_{0}^{1}f=0\Big\}.
\]
We will bound the Hilbert-Schmidt norm by the difference of the inverses
of the generators, which are, in contrast to the generators itself,
bounded operators. Recall that $R=\L(-L)$ for the Laplace transform
$\L(z)=\int_{0}^{\infty}e^{-tz}\gamma(dt),z\ge0$. By the functional
calculus for operators the function $f(z)=\L\left(-z^{-1}\right)$
maps $(L_{\epsilon}|_{V})^{-1}$ to $R^{\epsilon}|_{V}$. Furthermore,
$f$ is uniformly Lipschitz on $(-\infty,0)$:
\begin{lem}
Suppose that $\gamma\in\Gamma$ admits a bounded Lebesgue density
on $[0,T]$ for some $T>0$. Then we have 

\[
c:=\sup_{z<0}\Big|\frac{1}{z^{2}}\int_{0}^{\infty}te^{t/z}\gamma(dt)\Big|<\infty.
\]
\end{lem}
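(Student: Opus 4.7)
The plan is to reduce the claim to an elementary real-variable estimate. Setting $\lambda:=-z>0$, the quantity to bound becomes
\[
\Phi(\lambda):=\frac{1}{\lambda^{2}}\int_{0}^{\infty}t\,e^{-t/\lambda}\,\gamma(dt),
\]
and since $\Phi$ is positive and continuous on $(0,\infty)$, it suffices to control $\Phi$ as $\lambda\to 0^{+}$ and as $\lambda\to\infty$. I would split the integral at the threshold $T$ from the hypothesis, handling the piece on $[0,T]$ via the bounded density and the tail on $(T,\infty)$ via a peak-value bound for $t\mapsto te^{-t/\lambda}$.

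For the bulk piece, let $f$ denote the Lebesgue density of $\gamma$ on $[0,T]$ and set $M:=\|f\|_{L^{\infty}([0,T])}<\infty$. The standard identity $\int_{0}^{\infty}te^{-t/\lambda}\,dt=\lambda^{2}$ immediately gives
\[
\frac{1}{\lambda^{2}}\int_{0}^{T}t\,e^{-t/\lambda}f(t)\,dt\le M,
\]
uniformly in $\lambda>0$. For the tail, I would use the elementary maximisation $\sup_{t\ge 0}t\,e^{-t/(2\lambda)}=2\lambda/e$, which yields the factorisation $t\,e^{-t/\lambda}\le(2\lambda/e)\,e^{-t/(2\lambda)}$. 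Combining this with the monotonicity bound $e^{-t/(2\lambda)}\le e^{-T/(2\lambda)}$ on $\{t\ge T\}$ and $\gamma((T,\infty))\le 1$ reduces the tail contribution to a constant multiple of $\lambda^{-1}e^{-T/(2\lambda)}$, a continuous function on $(0,\infty)$ that vanishes at both endpoints and is therefore bounded (the maximum is attained at $\lambda=T/2$).

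The main obstacle is balancing two opposite regimes. As $\lambda\to 0^{+}$ the prefactor $\lambda^{-2}$ blows up and can only be absorbed by a matching concentration of mass of $\gamma$ near the origin, which is precisely what the bounded-density hypothesis provides; the $\lambda^{2}$ produced by the Gamma identity exactly cancels $\lambda^{-2}$. As $\lambda\to\infty$ the exponential weight becomes trivial, so the naive bound $t\,e^{-t/\lambda}\le t$ would require a finite first moment of $\gamma$, which is not assumed. The factorisation $t\,e^{-t/\lambda}=(t\,e^{-t/(2\lambda)})\,e^{-t/(2\lambda)}$ is the device that trades the factor $t$ for a $\lambda$ while retaining an exponential in $t/\lambda$, thereby handling the large-$\lambda$ regime simultaneously without appealing to any moment of $\gamma$.
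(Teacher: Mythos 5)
Your proof is correct and takes essentially the same route as the paper: the same split of the integral at $T$, and the same bulk estimate combining the bounded density with the identity $\int_{0}^{\infty}te^{-t/\lambda}\,dt=\lambda^{2}$. Your tail bound is an equivalent peak-value estimate — the paper maximizes $|z|^{-2}e^{-t/|z|}$ over $z$ for each fixed $t\ge T$ and then integrates, whereas you optimize over $t$ first and then over $\lambda$; both yield the same constant $4/(Te^{2})$.
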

\begin{proof}
We decompose 
\[
\sup_{z<0}\Big|\frac{1}{z^{2}}\int_{0}^{\infty}te^{t/z}\gamma(dt)\Big|\le\sup_{z<0}\Big|\frac{1}{z^{2}}\int_{0}^{T}te^{t/z}\gamma(dt)\Big|+\sup_{z<0}\Big|\frac{1}{z^{2}}\int_{T}^{\infty}te^{t/z}\gamma(dt)\Big|=:S_{1}+S_{2}.
\]
Due to the bounded Lebesgue density on $[0,T]$, we estimate the first
term by substituting $s=t/z$
\[
S_{1}\lesssim\sup_{z<0}z^{-2}\int_{0}^{T}te^{t/z}dt=\sup_{z<0}\int_{T/z}^{0}se^{s}ds=\int_{-\infty}^{0}se^{s}ds<\infty.
\]
For the second term note that the function $g_{a}(x)=x^{2}e^{-ax}$
takes maximum at $x=2/a$ and $g\left(2/a\right)=4a^{-2}e^{-2}.$
Consequently,

\[
S_{2}\le\sup_{z<0}\int_{T}^{\infty}tg_{t}(|z|^{-1})\gamma(dt)=\int_{T}^{\infty}\frac{4}{te^{2}}\gamma(dt)\leq\frac{4}{Te^{2}}<\infty.\tag*{{\qedhere}}
\]

\end{proof}
We conclude 
\[
\big\| r_{\epsilon'}-r_{\epsilon}\big\|_{L^{2}([0,1]^{2})}=\big\|(R^{\epsilon}-R^{\epsilon'})\big|{}_{V}\big\|_{HS}\leq c\big\|(L_{\epsilon}|_{V})^{-1}-(L_{\epsilon'}|_{V})^{-1}\big\|_{HS}\lesssim\delta2^{-j}=2^{-j(2s+3)/2},
\]
by the estimate for the difference of inverses of the generators that
was established in \citep[Sect. 5.3]{GobetHoffmannReiss:2004}. In
order to bound $N\|r_{\epsilon'}-r_{\epsilon}\|_{L^{2}([0,1]^{2})}^{2}$,
we thus choose $j$ such that $2^{j}\sim N^{1/(2s+3)}$. In view of
(\ref{eq:lowerBound}) we have proven Theorem~\ref{thm:Lower bounds}.\qed

\section{Proof for the adaptive estimator\label{sec:proofAdaptiv}}

In order to show that Lepski's method works, we need the following
concentration result. It slightly generalizes the corresponding concentration
inequalities by \citet[Theorems 10 and 11]{NicklSohl:2015} for a
low-frequently observed reflected diffusion to random sampling times.
\begin{prop}
\label{prop:concentration}Grant Assumptions~\ref{ass:times} and
\ref{ass:drift volatility} with $s>5/2$ and $\gamma\in\Gamma$,
$\mathbb{E}_{\gamma}[\tau^{-1/2}]\le D.$ There is a constant $c>0$
depending only on $d,D,I$ and $\alpha,$ such that, for any $\kappa>0,N\in\N$
and any $f\in L^{2}(\R)\cap L^{\infty}(\R),g\in L^{2}(\R^{2})\cap L^{\infty}(\R^{2})$:
\begin{align*}
\P\Big(\Big|\sum_{n=0}^{N}\big(f(X_{\tau_{n}})-\E[f(X_{0})]\big)\Big|>\kappa\Big)\lesssim & \exp\Big(-c\min\Big\{\frac{\kappa^{2}}{N\|f\|_{L^{2}}^{2}},\frac{\kappa}{(\log N)\|f\|_{\infty}}\Big\}\Big)
\end{align*}
and
\begin{align*}
\P\Big(\Big|\sum_{n=0}^{N-1}\big(g(X_{\tau_{n}},X_{\tau_{n+1}})- & \E[g(X_{0},X_{\tau_{1}})]\big)\Big|>\kappa\Big)\\
\lesssim & \exp\Big(-c\min\Big\{\frac{\kappa^{2}}{N\|g\|_{L^{2}}^{2}},\frac{\kappa}{(\log N)\|g\|_{\infty}}\Big\}\Big).
\end{align*}
\end{prop}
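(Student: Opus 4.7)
\bigskip

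\noindent\textbf{Proof proposal.}
The plan is to reduce the two inequalities to the Bernstein-type concentration bounds proved by \citet{NicklSohl:2015} (their Theorems~10 and~11) for a low-frequency Markov chain, verifying that the only properties those proofs actually use hold uniformly over $\Theta_s\times\Gamma_0$ in the present random-sampling framework. Concretely, their argument (following Adamczak--Bednorz) needs: (a) $(X_{\tau_n})_{n\ge0}$ is a reversible, stationary Markov chain with invariant density uniformly bounded above and below; (b) its transition operator has an $L^2(\mu)$ spectral gap bounded away from $0$; and (c) the one-step transition kernel is uniformly bounded in sup-norm. I would state the proposition as a direct translation of their result once (a)--(c) are checked, and spend the proof on those three items.

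Item (a) is immediate: $(X_{\tau_n})$ is Markov with transition operator $R$ (reversible w.r.t.\ $\mu$ because $L$ is $L^2(\mu)$-self-adjoint and $R=\mathcal{L}_\gamma(-L)$), and $c\le\mu\le C$ uniformly on $\Theta_s$ by the explicit formula~\eqref{eq:mu}. Item (b) is exactly Lemma~\ref{lem:eigenvaluesR}. The substantive step is (c). For this I would use, as in the lower-bound proof, the Qian--Zheng estimate $p_t(x,y)\le c_0(1+t^{-1/2})$ with $c_0$ depending only on $d,D$, and write
\begin{equation*}
r(x,y)=\int_0^\infty p_t(x,y)\,\gamma(dt)\le c_0\bigl(1+\mathbb{E}_\gamma[\tau^{-1/2}]\bigr)\le c_0(1+D),
\end{equation*}
which is the uniform sup-bound on the kernel. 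This is precisely where the extra hypothesis $\mathbb{E}_\gamma[\tau^{-1/2}]\le D$ defining $\Gamma_0$ enters.

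With (a)--(c) in hand, the first inequality follows verbatim from the proof of \citet[Thm.~10]{NicklSohl:2015}: one decomposes $f=\langle f,1\rangle_\mu+f_0$, applies the Bernstein inequality for geometrically ergodic reversible Markov chains to $\sum f_0(X_{\tau_n})$, and notes that the asymptotic variance is controlled by $\|f_0\|_{L^2(\mu)}^2\lesssim\|f\|_{L^2}^2$ via the spectral gap, while the uniform minorisation constant (which governs the $\|f\|_\infty\log N$ term) is controlled by the kernel bound from (c). For the second inequality I would observe that $Z_n:=(X_{\tau_n},X_{\tau_{n+1}})$ is itself a Markov chain on $[0,1]^2$: given $Z_n$ the law of $Z_{n+1}$ depends only on the second coordinate, so its transition operator $\tilde R$ on $L^2(\mu\otimes r)$ is a tensor-type composition of $R$, inherits the spectral gap of $R$, and has a bounded kernel $\tilde r\bigl((x,y),(x',y')\bigr)=\delta_y(x')r(x',y')$ in the appropriate sense. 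The same Bernstein machinery of \citet[Thm.~11]{NicklSohl:2015} then applies.

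The main obstacle is the technical transfer from equidistant to random sampling, which really reduces to the single point of verifying the uniform kernel bound (c); once this is settled, the rest is bookkeeping, with constants depending only on $d,D,I,\alpha$ through Lemma~\ref{lem:eigenvaluesR} and the Qian--Zheng constant $c_0$. A minor nuisance in the pair case is that $\tilde R$ is not reversible in an obvious way, but the Adamczak--Bednorz Bernstein inequality only requires a pseudo-spectral gap for $\tilde R\tilde R^*$, and this is easily bounded below by the square of the spectral gap of $R$, so the conclusion carries through.
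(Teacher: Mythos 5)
Your overall route coincides with the paper's: reduce both bounds to the Bernstein-type inequality for Markov chains of \citet[Theorem 6]{Adamczak:2007} as instantiated in \citet{NicklSohl:2015}, and verify its hypotheses for the time-changed chain $(X_{\tau_n})_n$ using the spectral gap from Lemma~\ref{lem:eigenvaluesR} and properties of the kernel $p_\gamma(x,y)=\int_0^\infty p_t(x,y)\,\gamma(dt)$. Your items (a) and (b) are correct, and the treatment of the pair chain $(X_{\tau_n},X_{\tau_{n+1}})$ for the second inequality is in line with what is required.

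There is, however, a genuine gap in item (c). You establish only the uniform \emph{upper} bound $p_\gamma\le c_0(1+\mathbb{E}_\gamma[\tau^{-1/2}])$ and then assert that the "uniform minorisation constant \dots is controlled by the kernel bound from (c)". An upper bound on the transition density cannot produce a minorisation condition: the regeneration/split-chain structure behind Adamczak's inequality (which governs the $\|f\|_\infty\log N$ term) requires $P(x,A)\ge\beta\nu(A)$, i.e.\ a uniform \emph{lower} bound $p_\gamma(x,y)\ge\beta>0$, making the whole state space a small set and the chain uniformly ergodic. This is precisely the step the paper isolates: by \citet[Proposition 9]{NicklSohl:2015} one has $\inf_{t\in I}\inf_{x,y}p_t(x,y)\ge K>0$ uniformly over $\Theta_s$ --- this is where the hypothesis $s>5/2$ enters, which your argument never invokes --- and hence $p_\gamma(x,y)\ge K\gamma(I)\ge K\alpha$ by Assumption~\ref{ass:times}. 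The upper bound you derive is still needed (to relate conditional second moments to $\|f\|_{L^2}^2$ and $\|g\|_{L^2}^2$), so that computation is not wasted, but without the lower bound the verification of Adamczak's conditions is incomplete and the proof does not close.
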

\begin{proof}
The conditions of the Markov chain concentration result by \citet[Theorem 6]{Adamczak:2007}
have to be verified. This can be done along the lines of the proofs
in \citep{NicklSohl:2015} using Lemma~\ref{lem:eigenvaluesR} and
noting that the transition density of the time-changed chain $(X_{\tau_{n}})_{n\ge1}$
is given by $p_{\gamma}(x,y)=\int_{0}^{\infty}p_{t}(x,y)\gamma(dt)$
where $p_{t}(x,y)$ denotes the transition density of the diffusion
$(X_{t})_{t\ge0}$. The condition $s>5/2$ ensures that the transition
density $p_{\gamma}$ is bounded from below uniformly on $[0,1]^{2}$.
Indeed, $p_{\gamma}(x,y)\geq K\gamma(I)\geq K\alpha,$ where $K$
is the uniform lower bound on $\inf_{t\in I}p_{t}$ obtained in \citep[Proposition 9]{NicklSohl:2015}.
Since $\|p_{t}\|_{\infty}\lesssim1+t^{-1/2},$ the condition $\mathbb{E}_{\gamma}[\tau^{-1/2}]<\infty$
ensures a uniform upper bound on $p_{\gamma}.$
\end{proof}
To analyze the performance of $\tilde{\sigma}^{2}$, we first decompose
its estimation error into a deterministic and a stochastic error term.
In what follows, $C=C(d,D,I,\alpha)$ denotes a numeric constant which
may vary from line to line. We deduce from the proof of Theorem~\ref{thm:Upper bounds}
on the there defined event $\mathcal{T}_{5},$ that for any $J\in\mathcal{J}_{N}$
\begin{align}
\|\hat{\sigma}_{J}^{2}-\sigma^{2}\|_{L^{2}}\le & C\big(\|\mu-\hat{\mu}_{J}\|_{L^{2}}+\|u_{1}-\hat{u}_{J,1}\|_{H^{1}}+|v_{1}-\hat{v}_{J,1}|\big)\nonumber \\
\le & C\big(\|\mu-\hat{\mu}_{J}\|_{L^{2}}+\|u_{1}-\hat{u}_{J,1}\|_{H^{1}}+|\kappa_{1}-\hat{\kappa}_{J,1}|+|\mathcal{L}_{\gamma}(-v_{1})-\hat{\mathcal{L}}_{\gamma}(-v_{1})|\big)\nonumber \\
\le & D_{J}+S_{J},\label{eq:errorDecomp}
\end{align}
where 
\begin{align*}
D_{J}:= & C\big(\|(I-\pi_{J})\mu\|_{L^{2}}+\|u_{1}-u_{J,1}\|_{H^{1}}+|\kappa_{1}-\kappa_{J,1}|\big),\\
S_{J}:= & C\big(\|\pi_{J}\mu-\hat{\mu}_{J}\|_{L^{2}}+\|u_{J,1}-\hat{u}_{J,1}\|_{H^{1}}+|\kappa_{J,1}-\hat{\kappa}_{J,1}|+|\mathcal{L}_{\gamma}(-v_{1})-\hat{\mathcal{L}}_{\gamma}(-v_{1})|\big).
\end{align*}
Due to the smoothness of the invariant measure, Jackson's inequality
and Proposition~\ref{prop:Bias error bounds}, there is some $\beta>0$,
depending on $\psi,d$ and $D$ such that 
\[
D_{J}\le\beta2^{-Js}.
\]
We need that $S_{J}$ concentrates around zero. Recalling the definition
of the residual vector
\[
r=\big(\hat{R}_{J}-R_{J}\big)u_{J,1}+\kappa_{J,1}\big(G_{J}-\hat{G}_{J}\big)u_{J,1},
\]
Bernstein's inequality and Theorem~\ref{thm:GHEP} on generalized
symmetric eigenvalue problems yield, on the event $\mathcal{T}_{2}$
from Proposition~\ref{prop:Variance error bounds}, that
\[
\|u_{J,1}-\hat{u}_{J,1}\|_{H^{1}}+|\kappa_{J,1}-\hat{\kappa}_{J,1}|\le C2^{J}\|u_{J,1}-\hat{u}_{J,1}\|_{L^{2}}+|\kappa_{J,1}-\hat{\kappa}_{J,1}|\leq\|r\|_{L^{2}}\big(C2^{J}+1\big).
\]

\begin{cor}
\label{cor:concentration}Under the conditions of Proposition~\ref{prop:concentration},
for any $\tau>1$ there exist $\eta_{1},\eta_{2},\eta_{3}>1$, such
that, for all $J$ with $2^{J}\lesssim\frac{N}{(\log N)^{2}\log\log N}$,
we have 
\begin{align}
\P\Big(\|\pi_{J}\mu-\hat{\mu}_{J}\|_{L^{2}}>2^{\frac{J}{2}}\eta_{1}\sqrt{\frac{\log\log N}{N}}\Big) & \lesssim(\log N)^{-\tau},\label{eq:concMu}\\
\P\Big(\|r\|_{L^{2}}>2^{\frac{J}{2}}\eta_{2}\sqrt{\frac{\log\log N}{N}}\Big) & \lesssim(\log N)^{-\tau},\label{eq:concR}\\
\P\Big(|\mathcal{L}_{\gamma}(-v_{1})-\hat{\mathcal{L}}_{\gamma}(-v_{1})|>\eta_{3}\sqrt{\frac{\log\log N}{N}}\Big) & \lesssim(\log N)^{-\tau}.\label{eq:concL}
\end{align}
In particular, there is a $\Lambda>0$ such that $\P(4S_{J}>s_{J})\lesssim(\log N)^{-\tau}$
for $s_{J}=s_{J}(\Lambda)$ from (\ref{eq:radius}).\end{cor}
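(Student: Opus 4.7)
The plan is to establish the three concentration bounds (\ref{eq:concMu})--(\ref{eq:concL}) separately and then combine them with the matrix a posteriori estimate of Theorem~\ref{thm:GHEP}.

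I begin with (\ref{eq:concL}), which is the easiest. Since $\hat{\mathcal{L}}(-v_1)-\mathcal{L}_\gamma(-v_1)=N^{-1}\sum_{n=1}^N(e^{v_1\Delta_n}-\mathbb{E}_\gamma[e^{v_1\Delta_1}])$ is a centered i.i.d.\ average of uniformly bounded summands in $(0,1]$, the classical Bernstein inequality yields
\begin{equation*}
\P\Big(|\hat{\mathcal{L}}(-v_1)-\mathcal{L}_\gamma(-v_1)|>\eta_3\sqrt{\log\log N/N}\Big)\lesssim(\log N)^{-c\eta_3^2},
\end{equation*}
so it suffices to pick $\eta_3$ with $c\eta_3^2>\tau$.

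The bounds (\ref{eq:concMu}) and (\ref{eq:concR}) form the core obstacle: each is a norm of an empirical process over a class of dimension $2^J$, and a naive union bound over wavelet indices loses a multiplicative factor $2^J$ of order up to $N/((\log N)^2\log\log N)$, which would destroy the targeted $(\log N)^{-\tau}$ rate. I would apply the Talagrand-type supremum inequality for empirical processes of geometrically ergodic Markov chains from \citet{Adamczak:2007}, which is the same input that underlies Proposition~\ref{prop:concentration}. Writing
\begin{equation*}
\|\pi_J\mu-\hat{\mu}_J\|_{L^2}=\sup_{f\in V_J,\,\|f\|_{L^2}\le1}\langle f,\mu_N-\mu\rangle,
\end{equation*}
the indexing class has uniform envelope $\|f\|_\infty\lesssim 2^{J/2}$ by (\ref{eq:Sum of squared base functions}), uniform variance $\operatorname{Var}(f(X_0))\lesssim1$ by boundedness of $\mu$, and expectation bounded by $\sqrt{2^J/N}$ via Lemma~\ref{lem:Variance estimates}. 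Inserting $t=\eta_1 2^{J/2}\sqrt{\log\log N/N}$, the sub-Gaussian regime yields $\exp(-c\eta_1^2 2^J\log\log N)\le(\log N)^{-c\eta_1^2\log\log N}$ (using $2^J\ge 2^{J_{\min}}\sim\log N$) and the sub-exponential regime yields $\exp(-c\eta_1\sqrt{N\log\log N}/\log N)$, which is super-polynomial in $\log N$ under the hypothesis $2^J\le N/((\log N)^2\log\log N)$; in either case the probability is $\le(\log N)^{-\tau}$ for $\eta_1$ large. The bound (\ref{eq:concR}) is proven analogously, invoking the bivariate form (the second inequality of Proposition~\ref{prop:concentration}) for the function class generated by $(x,y)\mapsto f(x)u_{J,1}(y)+f(y)u_{J,1}(x)$ with $f\in V_J$, $\|f\|_{L^2}\le1$; envelope and variance have the same orders (using boundedness of $u_{J,1}$), and the expectation bound $\mathbb{E}\|r\|_{L^2}^2\lesssim 2^J/N$ is Corollary~\ref{Cor:Bound on residual vector}.

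For the combined claim $\P(4S_J>s_J)\lesssim(\log N)^{-\tau}$, let $\mathcal{E}$ be the intersection of the three events above with the event $\mathcal{T}_2$ of Proposition~\ref{prop:Variance error bounds}; the complement of $\mathcal{T}_2$ can be reduced to $(\log N)^{-\tau}$ by replacing the Markov bounds in the proof of that proposition with Proposition~\ref{prop:concentration} applied to $\|\hat{G}_J-G_J\|_{L^2}$ and $|\hat{\kappa}_{J,j}-\kappa_{J,j}|$. On $\mathcal{E}$, Theorem~\ref{thm:GHEP} together with the Bernstein inequality $\|v\|_{H^1}\lesssim 2^J\|v\|_{L^2}$ for $v\in V_J$ (from (\ref{eq:bernsteinJackson})) gives
\begin{equation*}
\|u_{J,1}-\hat{u}_{J,1}\|_{H^1}+|\kappa_{J,1}-\hat{\kappa}_{J,1}|\le(C2^J+1)\|r\|_{L^2}\lesssim 2^{3J/2}\eta_2\sqrt{\log\log N/N},
\end{equation*}
so the error decomposition of $S_J$ collapses to $4S_J\lesssim(\eta_1+\eta_2+\eta_3)\cdot 2^{3J/2}\sqrt{\log\log N/N}\le s_J$ once $\Lambda$ is chosen large relative to $\eta_1,\eta_2,\eta_3$; the lower-order $2^{J/2}$ and constant terms are absorbed using $2^J\ge 2^{J_{\min}}\gtrsim\log N$.
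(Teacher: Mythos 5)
Your proof is correct and rests on the same engine as the paper's --- Adamczak's concentration inequality for geometrically ergodic Markov chains --- but packages the dimension-$2^{J}$ union differently. For (\ref{eq:concMu}) and (\ref{eq:concR}) the paper applies Proposition~\ref{prop:concentration} coordinatewise to each wavelet coefficient $\langle\psi_{\lambda},\mu-\mu_{N}\rangle$ (respectively $\langle\psi_{\lambda},(\hat{R}_{J}-R_{J})u_{J,1}\rangle$), then invokes an external chaining/maximal inequality (Baraud, Nickl--Sohl) to pass to the maximum over $|\lambda|\le J$ without paying the union-bound factor $2^{J}$, and finally uses $\|\cdot\|_{L^{2}}^{2}\le2^{J}\max_{\lambda}|\cdot|^{2}$; you instead write the $L^{2}$ norm as a supremum over the $L^{2}$-unit ball of $V_{J}$ and apply the supremum (Talagrand-type) form of Adamczak's inequality directly, with envelope $2^{J/2}$ from (\ref{eq:Sum of squared base functions}), unit variance proxy, and expectation $\sqrt{2^{J}/N}$ from Lemma~\ref{lem:Variance estimates}. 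The two routes are essentially equivalent in substance --- the paper's chaining references are themselves built on the supremum inequality --- and both require more than the single-function statement of Proposition~\ref{prop:concentration}; your exponent bookkeeping has a harmless slip ($\exp(-c\eta_{1}^{2}2^{J}\log\log N)$ is $(\log N)^{-c\eta_{1}^{2}2^{J}}$, even smaller than what you wrote), and the restriction $2^{J}\lesssim N/((\log N)^{2}\log\log N)$ is actually needed in the paper's coordinatewise sub-exponential regime rather than in your supremum version, where the envelope cancels. Your treatment of (\ref{eq:concL}) and of the final assembly of $S_{J}$ via Theorem~\ref{thm:GHEP} and Bernstein's inequality in $V_{J}$ matches the paper. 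One point deserves emphasis: the conclusion $\P(4S_{J}>s_{J})\lesssim(\log N)^{-\tau}$ requires being on an event of type $\mathcal{T}_{2}$ (so that Theorem~\ref{thm:GHEP} identifies the correct eigenpair with a lower-bounded isolation distance), and the Markov-inequality bound $\P(\Omega\setminus\mathcal{T}_{2})\lesssim N^{-1}2^{3J}$ is useless for $J$ near $J_{\max}$; you explicitly flag this and propose upgrading it with concentration for $\|\hat{G}_{J}-G_{J}\|$ and the Weyl bound, which is more than the paper says --- though for the largest levels $J$ even that upgrade is delicate, since the Hilbert--Schmidt perturbation is only of order $2^{J}\sqrt{\log\log N/N}$. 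This is a gap you share with (and at least acknowledge more openly than) the paper's own one-line "in particular".
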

\begin{proof}
Fix $\tau>1$. Since $\|\psi_{\lambda}\|_{\infty}\lesssim2^{|\lambda|/2}$,
for $|\lambda|\leq J$, using Proposition~\ref{prop:concentration}
we obtain 
\begin{align*}
\P\Big(|\langle\psi_{\lambda},\mu-\mu_{N}\rangle|>\eta_{1}\sqrt{\frac{\log\log N}{N}}\Big) & \lesssim\exp\Big(-c\min\Big\{\frac{\eta_{1}^{2}N(\log\log N)}{N\|\psi_{\lambda}\|_{L^{2}}^{2}},\frac{\eta_{1}\sqrt{N(\log\log N)}}{(\log N)\|\psi_{\lambda}\|_{\infty}}\Big\}\Big)\\
 & \lesssim\exp\Big(-c\eta_{1}\min\Big\{\log\log N,\frac{\sqrt{N(\log\log N)}}{(\log N)2^{J/2}}\Big\}\Big)\\
 & \lesssim(\log N)^{-c\eta_{1}}\lesssim(\log N)^{-\tau},
\end{align*}
for some $\eta_{1}$ big enough. Applying a usual chaining argument,
this concentration inequality carries over to $\max_{|\lambda|\le J}|\langle\psi_{\lambda},\mu-\mu_{N}\rangle|$,
cf. \citep[Theorem 2.1]{baraud:2010} and \citep[Theorem 12]{NicklSohl:2015}.
Since $\|\mu_{J}-\hat{\mu}_{J}\|_{L^{2}}^{2}=\sum_{|\lambda|\le J}|\langle\psi_{\lambda},\mu-\mu_{N}\rangle|^{2},$
it follows that
\[
\P\Big(\|\pi_{J}\mu-\hat{\mu}_{J}\|_{L^{2}}^{2}>\eta_{1}^{2}2^{J}\frac{\log\log N}{N}\Big)\lesssim\P\Big(\max_{|\lambda|\leq J}|\langle\psi_{\lambda},\mu-\mu_{N}\rangle|^{2}>\eta_{1}^{2}\frac{\log\log N}{N}\Big)\lesssim(\log N)^{-\tau}.
\]

To prove (\ref{eq:concR}), note first that since $|\kappa_{J,1}|\leq1,$
we have 
\[
\|r\|_{L^{2}}\leq\|\big(\hat{R}_{J}-R_{J}\big)u_{J,1}\|_{L^{2}}+\|\big(G_{J}-\hat{G}_{J}\big)u_{J,1}\|_{L^{2}}.
\]
By Proposition \ref{prop:Bias error bounds} $\|u_{J,1}\|_{L^{2}},\|u_{J,1}\|_{\infty}\lesssim1$
holds for $J$ big enough. Using the second inequality in Proposition~\ref{prop:concentration},
we obtain
\begin{align*}
 & \P\Big(|\langle\psi_{\lambda},\big(\hat{R}_{J}-R_{J}\big)u_{J,1}\rangle|>\eta_{2}\sqrt{\frac{\log\log N}{N}}\Big)\\
 & \qquad\qquad\lesssim\exp\Big(-c\eta_{2}\min\Big\{\frac{N(\log\log N)}{N},\frac{\sqrt{N(\log\log N)}}{(\log N)2^{J/2}}\Big\}\Big)\lesssim(\log N)^{-C\eta_{2}}\lesssim(\log N)^{-\tau},
\end{align*}
for $\eta_{2}$ big enough. Since $\|\big(\hat{R}_{J}-R_{J}\big)u_{J,1}\|_{L^{2}}=\sum_{|\lambda|\leq J}|\langle\psi_{\lambda},\big(\hat{R}_{J}-R_{J}\big)u_{J,1}\rangle|^{2}$,
we conclude again that 
\[
\P\Big(\|\big(\hat{R}_{J}-R_{J}\big)u_{J,1}\|_{L^{2}}>\eta_{2}2^{\frac{J}{2}}\sqrt{\frac{\log\log N}{N}}\Big)\lesssim(\log N)^{-\tau}.
\]
Arguing similarly we deduce also $\P\Big(\|\big(G_{J}-\hat{G}_{J}\big)u_{J,1}\|_{L^{2}}>\eta_{2}2^{\frac{J}{2}}\sqrt{\frac{\log\log N}{N}}\Big)\lesssim(\log N)^{-\tau}$
and thus (\ref{eq:concR}) holds.

The concentration inequality~(\ref{eq:concL}) follows from the classical
Bernstein inequality. Indeed, we have
\[
\hat{\mathcal{L}}_{\gamma}(-v_{1})-\mathcal{L}_{\gamma}(-v_{1})=\frac{1}{N}\sum_{n=1}^{N}\xi_{n}\quad\mbox{with}\quad\xi_{n}:=e^{v_{1}\Delta_{n}}-\mathbb{E}_{\gamma}[e^{v_{1}\Delta_{n}}],
\]
where, by Assumption~\ref{ass:times} the random variables $\xi_{n}$
are independent, centered and deterministically bounded by $2$ (because
$v_{1}<0$). Since $\operatorname{Var}_{\gamma}(\xi_{n})\le\mathcal{L}_{\gamma}(-2v_{1})\le1$,
we can choose $\eta_{3}$ uniformly for all $\gamma\in\Gamma$.
\end{proof}
We can now prove the convergence rate for the adaptive estimator.
\begin{proof}[Proof of Theorem~\ref{thm:adaptiveEstimation}]
Let us introduce the oracle projection level
\[
J^{*}:=\min\big\{ J\in\mathcal{J}_{N}:\beta2^{-Js}<s_{J}/4\big\}.
\]
By the choice of $\mathcal{J}_{N}$ we deduce $2^{J^{*}}\sim(N/\log\log N)^{1/(2s+3)}$
and $s_{J^{*}}^{2}\sim(\log\log N/N)^{2s/(2s+3)}$. Since the number
of elements in $\mathcal{J}_{N}$ is of order $\log N$, Proposition~\ref{prop:concentration}
yields $\P(\mathcal{A}_{N})\to1$ for the event
\[
\mathcal{A}_{N}:=\big\{\forall J\in\mathcal{J}_{N}:4S_{J}\le s_{J}\big\}\cap\mathcal{T}_{6}
\]
with $\mathcal{T}_{6}$ from the proof of Theorem~\ref{thm:Upper bounds}.
Due to the decomposition~(\ref{eq:errorDecomp}), on $\mathcal{A}_{N}$
we have for every$J\in\mathcal{J}_{N}$:
\[
\|\hat{\sigma}_{J}^{2}-\sigma^{2}\|_{L^{2}}\le D_{J}+S_{J}\le\beta2^{-Js}+s_{J}.
\]
Hence, for all $J\ge J^{*},$ $J\in\mathcal{J}_{N}$, we obtain
\[
\|\hat{\sigma}_{J}^{2}-\sigma^{2}\|_{L^{2}[a,b]}\le\frac{1}{2}s_{J},
\]
and thus, by the triangle inequality,
\[
\|\hat{\sigma}_{J}^{2}-\hat{\sigma}_{J^{*}}^{2}\|_{L^{2}[a,b]}\le s_{J},
\]
 for all $J\ge J^{*}$, $J\in\mathcal{J}_{N}$. By definition of $\hat{J}$,
we conclude that $\hat{J}\le J^{*}$ on the event $\mathcal{A}_{N}$.
We conclude that
\[
\|\tilde{\sigma}^{2}-\sigma^{2}\|_{L^{2}[a,b]}\le\|\hat{\sigma}_{\hat{J}}^{2}-\hat{\sigma}_{J^{*}}^{2}\|_{L^{2}[a,b]}+\|\hat{\sigma}_{J^{*}}^{2}-\sigma^{2}\|_{L^{2}[a,b]}\leq s_{J^{*}}+\frac{1}{2}s_{J^{*}}\le\frac{3}{2}s_{J^{*}}.\tag*{{\qedhere}}
\]
 
\end{proof}
\appendix

\section{Stability of the eigenvalue problems}

\subsection{Compact, self-adjoint, positive-definite operators}
\begin{thm}
\label{thm:EP for self-adjoint compact positive-definite operator}Consider
$T$ a compact, self-adjoint and positive-definite operator on some
Hilbert space $\mathcal{H}=\left(H,\|\cdot\|\right)$. Denote its
eigenpairs by $\left(\lambda_{i},x_{i}\right)_{i=1,2,...}$, normalized
so that $\|x_{i}\|=1$ and ordered decreasingly with respect to the
eigenvalues. Let $V\subset H$ be a finite dimensional subspace of
$H$, and $\pi$ the orthogonal projection on $V$. Assume that the
biggest eigenvalue $\lambda_{1}$ is simple and that
\[
\left\Vert \left(I-\pi\right)x_{1}\right\Vert <\frac{\lambda_{1}-\lambda_{2}}{6\lambda_{1}}.
\]
Consider the projected operator $\pi T\pi$ and denote its normalized,
ordered decreasingly, eigenpairs by $\left(\lambda_{i}^{V},x_{i}^{V}\right)_{i=1,2,...,\text{{dim\}(\ensuremath{V_{J}})}}}$
. Then

\[
\left|\lambda_{1}-\lambda_{1}^{V}\right|+\left\Vert x_{1}-x_{1}^{V}\right\Vert \leq C\left\Vert \left(I-\pi\right)x_{1}\right\Vert 
\]
holds, where the constant $C$ depends only on the size of the spectral
gap $\lambda_{1}-\lambda_{2}$ and the first eigenvalue $\lambda_{1}$.\end{thm}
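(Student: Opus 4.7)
My plan is a standard stability argument for the top eigenpair of a compact self-adjoint positive operator under Galerkin projection, leveraging the spectral gap of $T$ and min-max characterizations. Write $\epsilon:=\|(I-\pi)x_{1}\|$ and decompose $x_{1}=\pi x_{1}+(I-\pi)x_{1}$, so that $\|\pi x_{1}\|^{2}=1-\epsilon^{2}$. The orthonormal basis of eigenvectors $\{x_{i}^{V}\}$ of $\pi T\pi$ inside $V$ will be the natural reference frame to expand $\pi x_{1}$.

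First I would settle the eigenvalue bound. The upper bound $\lambda_{1}^{V}\le\lambda_{1}$ is immediate from the Courant--Fischer min-max principle because $V\subset H$. For the lower bound, testing the Rayleigh quotient of $\pi T\pi$ against $\pi x_{1}$ and expanding
\[
\langle\pi x_{1},T\pi x_{1}\rangle=\lambda_{1}-2\lambda_{1}\epsilon^{2}+\langle(I-\pi)x_{1},T(I-\pi)x_{1}\rangle
\]
(using $\langle(I-\pi)x_{1},x_{1}\rangle=\epsilon^{2}$ and $Tx_{1}=\lambda_{1}x_{1}$) together with $\|T\|=\lambda_{1}$ gives $\lambda_{1}^{V}\ge\lambda_{1}-C\lambda_{1}\epsilon^{2}$. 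Hence $|\lambda_{1}-\lambda_{1}^{V}|\lesssim\lambda_{1}\epsilon^{2}$, which is even sharper than the claimed linear bound.

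Next I would tackle the eigenvector bound, the main step. Expand $\pi x_{1}=\sum_{i}c_{i}x_{i}^{V}$ with $c_{i}=\langle\pi x_{1},x_{i}^{V}\rangle$, and observe the crucial identity
\[
(\pi T\pi)\pi x_{1}=\pi Tx_{1}-\pi T(I-\pi)x_{1}=\lambda_{1}\pi x_{1}-\pi T(I-\pi)x_{1},
\]
so that $\sum_{i}(\lambda_{1}-\lambda_{i}^{V})c_{i}x_{i}^{V}=\pi T(I-\pi)x_{1}$. Taking norms and discarding the $i=1$ term,
\[
\sum_{i\ge 2}(\lambda_{1}-\lambda_{i}^{V})^{2}c_{i}^{2}\le\|T\|^{2}\epsilon^{2}\le\lambda_{1}^{2}\epsilon^{2}.
\]
The spectral gap enters through $\lambda_{2}^{V}\le\lambda_{2}$ (again by min-max), so $\lambda_{1}-\lambda_{i}^{V}\ge\lambda_{1}-\lambda_{2}$ for $i\ge 2$, giving $\sum_{i\ge2}c_{i}^{2}\le\lambda_{1}^{2}\epsilon^{2}/(\lambda_{1}-\lambda_{2})^{2}$. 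The quantitative hypothesis $\epsilon<(\lambda_{1}-\lambda_{2})/(6\lambda_{1})$ then forces $\sum_{i\ge2}c_{i}^{2}\le\epsilon^{2}/36$, and combined with $\sum_{i}c_{i}^{2}=1-\epsilon^{2}$ yields $c_{1}^{2}\ge 1-2\epsilon^{2}$.

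Finally, after fixing the sign of $x_{1}^{V}$ so that $c_{1}>0$, I would combine the pieces:
\[
\|x_{1}^{V}-\pi x_{1}\|^{2}=(1-c_{1})^{2}+\sum_{i\ge2}c_{i}^{2}=2-2c_{1}-\epsilon^{2}\lesssim\epsilon^{2},
\]
since $1-c_{1}\le 1-\sqrt{1-2\epsilon^{2}}\lesssim\epsilon^{2}$. Combining with $\|x_{1}-\pi x_{1}\|=\epsilon$ via the triangle inequality gives $\|x_{1}-x_{1}^{V}\|\lesssim\epsilon$, where the implicit constant depends only on $\lambda_{1}$ and the spectral gap $\lambda_{1}-\lambda_{2}$. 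The main technical nuisance — and what the sharp constant $1/6$ in the hypothesis is designed to handle — is making the argument that forces $c_{1}\approx1$ rigorous: one needs the bound on $\sum_{i\ge 2}c_{i}^{2}$ to be strictly less than $\|\pi x_{1}\|^{2}$ so that $c_{1}$ cannot accidentally be the small component, which is exactly what the assumed smallness of $\epsilon$ relative to the spectral gap enforces; the choice of sign of $x_1^V$ then selects the correct ordering.
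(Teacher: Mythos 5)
Your argument is correct and reaches the stated bound, but the eigenvector part takes a genuinely different route from the paper. The paper first establishes the spectral gap of $\pi T\pi$ (via $\lambda_2^V\le\lambda_2$ and the eigenvalue estimate), forms the residual $r=(\pi T\pi-T)x_1$, and then bounds $\|x_1-Px_1\|$ by a Cauchy contour integral of the resolvent of $\pi T\pi$ around $\lambda_1$, using $\|R(\pi T\pi,z)\|=\operatorname{dist}(z,\sigma(\pi T\pi))^{-1}$ for self-adjoint operators; finally it converts $\|x_1-Px_1\|$ into $\|x_1-x_1^V\|$ via the normalization identity. You instead expand $\pi x_1=\sum_i c_i x_i^V$ in the eigenbasis of $\pi T\pi|_V$, use the exact identity $\sum_i(\lambda_1-\lambda_i^V)c_ix_i^V=\pi T(I-\pi)x_1$ together with $\lambda_1-\lambda_i^V\ge\lambda_1-\lambda_2$ for $i\ge2$ to show all non-leading coefficients are $O(\epsilon)$, and conclude by Pythagoras. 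This is a hands-on Davis--Kahan-type argument: more elementary (no resolvent calculus or contour integration), and as a bonus your Rayleigh-quotient computation gives the sharper quadratic eigenvalue bound $|\lambda_1-\lambda_1^V|\lesssim\lambda_1\epsilon^2/(1-\epsilon^2)$ rather than the paper's linear $3\|T\|\epsilon$. The paper's resolvent approach is the one that generalizes to the perturbed matrix problems treated later in the appendix (Theorem on GSEPs), which is presumably why the authors use it here.

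One constant is off, though it does not damage the proof: from $\sum_{i\ge2}c_i^2\le\lambda_1^2\epsilon^2/(\lambda_1-\lambda_2)^2$ and $\epsilon<(\lambda_1-\lambda_2)/(6\lambda_1)$ you can conclude $\sum_{i\ge2}c_i^2<1/36$, but not $\sum_{i\ge2}c_i^2\le\epsilon^2/36$, since $\lambda_1/(\lambda_1-\lambda_2)\ge1$. The correct consequence is $c_1^2\ge1-(1+K^2)\epsilon^2$ with $K=\lambda_1/(\lambda_1-\lambda_2)$, whence $1-c_1\le(1+K^2)\epsilon^2$ (the hypothesis guarantees $(1+K^2)\epsilon^2<1/18$, so the square-root estimate applies). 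The final constant then depends on $\lambda_1$ and the gap, which is exactly what the theorem permits, so the conclusion stands unchanged.
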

\begin{proof}
Since $T$ is self-adjoint and positive-definite $\|T\|=\sup_{x\in H}\frac{\langle Tx,x\rangle}{\|x\|^{2}}=\lambda_{1}$.
By the variational characterization of the eigenvalues 
\begin{equation}
\lambda_{i}^{V}=\sup_{\begin{subarray}{c}
S\subset V\\
\text{dim}(S)=i
\end{subarray}}\inf_{y\in S}\frac{\langle y,Ty\rangle}{\|y\|^{2}}\leq\sup_{\begin{subarray}{c}
S\subset H\\
\text{dim}(S)=i
\end{subarray}}\inf_{y\in S}\frac{\langle y,Ty\rangle}{\|y\|^{2}}=\lambda_{i}.\label{eq:eigenvalues order}
\end{equation}
Furthermore
\begin{eqnarray*}
\lambda_{1}-\lambda_{1}^{V} & \leq & \frac{\left\langle \left(\lambda_{1}-\pi T\pi\right)\left(\pi x_{1}\right),\pi x_{1}\right\rangle }{\left\Vert \pi x_{1}\right\Vert ^{2}}=\frac{\left\langle \pi T\left(I-\pi\right)x_{1},\pi x_{1}\right\rangle }{\left\Vert \pi x_{1}\right\Vert ^{2}}\\
 & \leq & \frac{\left\Vert \pi T\left(I-\pi\right)x_{1}\right\Vert }{\left\Vert \pi x_{1}\right\Vert }\leq\|T\|\frac{\left\Vert \left(I-\pi\right)x_{1}\right\Vert }{\left\Vert \pi x_{1}\right\Vert }\\
 & \leq & \|T\|\frac{\left\Vert \left(I-\pi\right)x_{1}\right\Vert }{1-\left\Vert \left(I-\pi\right)x_{1}\right\Vert }.
\end{eqnarray*}
Since $\left|\lambda_{1}-\lambda_{1}^{V}\right|\leq2\|T\|$, from
the inequality $\frac{z}{1-z}\land2\leq3z$ for $z=\left\Vert \left(I-\pi\right)x_{1}\right\Vert $
follows that 

\[
\left|\lambda_{1}-\lambda_{1}^{V}\right|\leq3\|T\|\left\Vert \left(I-\pi\right)x_{1}\right\Vert .
\]

Since by (\ref{eq:eigenvalues order}) holds $\lambda_{2}^{V}\leq\lambda_{2}$
and $\|T\|\left\Vert \left(I-\pi\right)x_{1}\right\Vert <\frac{\lambda_{1}-\lambda_{2}}{6}$
we have
\begin{eqnarray*}
\left|\lambda_{1}^{V}-\lambda_{2}^{V}\right| & \geq & \lambda_{1}^{V}-\lambda_{2}=\left|\lambda_{1}-\lambda_{2}\right|-\left|\lambda_{1}-\lambda_{1}^{V}\right|\\
 & \geq & \lambda_{1}-\lambda_{2}-3\|T\|\left\Vert \left(I-\pi\right)x_{1}\right\Vert \geq\frac{1}{2}\left(\lambda_{1}-\lambda_{2}\right).
\end{eqnarray*}
Consequently the projected operator $\pi T\pi$ has a spectral gap
of size $\rho\geq\frac{\lambda_{1}-\lambda_{2}}{2}$ and in particular
the eigenvalue $\lambda_{1}^{V}$ is simple. Define the residual vector
$r=\left(\pi T\pi-T\right)x_{1}.$ Then
\begin{eqnarray*}
\|r\|=\left\Vert \left(\pi T\pi-T\right)x_{1}\right\Vert  & \leq & \left\Vert \pi T\pi x_{1}-\pi Tx_{1}\right\Vert +\lambda_{1}\left\Vert \pi x_{1}-x_{1}\right\Vert \\
 & \leq & \left(\|T\|+\lambda_{1}\right)\left\Vert \left(I-\pi\right)x_{1}\right\Vert .
\end{eqnarray*}
Consequently, in order to prove $\left\Vert x_{1}-x_{1}^{V}\right\Vert \leq C\left\Vert \left(I-\pi\right)x_{1}\right\Vert $,
it suffices to justify that
\[
\left\Vert x_{1}-x_{1}^{V}\right\Vert \leq\frac{3\rho^{2}}{2\sqrt{2}}\left\Vert r\right\Vert 
\]
Let $P$ be the spectral projection on the eigenspace of operator
$\pi T\pi$ corresponding to the eigenvalue $\lambda_{1}^{V}.$ Let
$R\left(\pi T\pi,z\right)=(\pi T\pi-z)^{-1}$ be the resolvent operator.
Using Cauchy's integral representation of the spectral projection
(see Lemma 6.4 from \citep{Chatelin:1983}) and $|\lambda_{1}-\lambda_{1}^{V}|\leq\rho$
we find 
\begin{eqnarray*}
\left\Vert x_{1}-Px_{1}\right\Vert  & = & \frac{1}{2\pi}\Big\|\varoint_{S(\lambda_{1},3\rho/2)}\frac{R\left(\pi T\pi,z\right)}{\lambda_{1}-z}dz\left(\pi T\pi-T\right)x_{1}\Big\|\\
 & \leq & \frac{3\rho}{2}\|r\|\sup_{z\in S(\lambda_{1},3\rho/2)}\left\Vert R\left(\pi T\pi,z\right)\right\Vert .
\end{eqnarray*}
Since operator $\pi T\pi$ is self adjoint on $\mathcal{H}$ we know
that (see Proposition 2.32 from \citep{Chatelin:1983}) $\left\Vert R\left(\pi T\pi,z\right)\right\Vert =\left(\text{dist}\left(z,\sigma\left(\pi T\pi\right)\right)\right)^{-1}$.
Consequently
\begin{eqnarray*}
\sup_{z\in S(\lambda_{1},3\rho/2)}\left\Vert R\left(\pi T\pi,z\right)\right\Vert  & = & \sup_{z\in S(\lambda_{1},3\rho/2)}\left(\text{dist}\left(z,\sigma\left(\pi T\pi\right)\right)\right)^{-1}\leq\frac{\rho}{2}.
\end{eqnarray*}
It remains to bound the distance between the eigenvectors. Since $x_{1}$
and $x_{1}^{V}$ are normalized
\begin{eqnarray*}
\left\Vert x_{1}^{V}-x_{1}\right\Vert ^{2} & = & 2-2\langle x_{1}^{V},x_{1}\rangle\leq2-2\langle x_{1}^{V},x_{1}\rangle^{2}\\
 & = & 2\left(1+\langle x_{1}^{V},x_{1}\rangle\right)\left(1-\langle x_{1}^{V},x_{1}\rangle\right)=2\left\Vert x_{1}-\langle x_{1}^{V},x_{1}\rangle x_{1}^{V}\right\Vert ^{2}.
\end{eqnarray*}
Since $\lambda_{1}^{V}$ is simple, the right hand side is equal to
$2\left\Vert x_{1}-Px_{1}\right\Vert ^{2}$.
\end{proof}

\subsection{Generalized symmetric eigenvalue problems.\label{sub:Appendix Generalized symmetric eigenvalue problems}}

In this section we want to sketch the a posteriori technique of solving
generalized symmetric eigenvalue problems (GSEP). GSEPs have been
studied extensively in chapter VI of \citep{StewartSun:MatrixPerturbationTh}.
For the error analysis in the case of standard matrix eigenvalue problems
we refer to Chapter 1 of \citep{Chatelin:1983} or Chapter V of \citep{StewartSun:MatrixPerturbationTh}.
A particularly useful reference for various eigenvalue problems is
\citep{SIAM:Templates:GHEP}.

Consider $A,B\in\R^{n\times n}$ real, symmetric matrices with $B$
positive definite. We call a pair $(\lambda,x)\in\R\times\left(\R^{n}\setminus\left\{ 0\right\} \right)$
an eigenpair of the generalized symmetric eigenvalue problem (GSEP)
for matrices $A,B$ if 
\begin{equation}
Ax=\lambda Bx.\label{eq:ApGSEP}
\end{equation}
Furthermore we adapt the notation of the standard eigenvalue problems
calling $\lambda$ the eigenvalue and $x$ the eigenvector. An eigenpair
is normalized if $\|x\|=1$, where $\|x\|=\big(\sum_{i=1}^{n}x_{i}^{2}\big)^{\frac{1}{2}}$
is the Euclidean norm on $\R^{n}$.

Using Cholesky decomposition of matrix $B=DD^{*}$ one can reduce
the generalized problem (\ref{eq:ApGSEP}) to the standard eigenvalue
problem for matrix $D^{-1}AD^{-*}$. We deduce that problem (\ref{eq:ApGSEP})
has $n$ solutions $(\lambda_{i},x_{i})_{i=1,..,n}$, all eigenvalues
are real and we can ordered the eigenpairs with respect to the eigenvalues
$\lambda_{1}\geq\lambda_{2}\geq...\geq\lambda_{n}$. Furthermore corresponding
eigenvectors $(x_{i})_{i=1,..,n}$ form a $B-$orthogonal basis of
$\R^{n}.$

Consider now perturbed matrices $\tilde{A}$, $\tilde{B}$ with $\tilde{B}$
positive definite and the corresponding GSEP: 
\begin{equation}
\tilde{A}\tilde{x}=\tilde{\lambda}\tilde{B}\tilde{x}.\label{eq:ApPGSEP}
\end{equation}
We want to formulate error bounds between $(\tilde{\lambda}_{1},\tilde{x}_{1})$
and $(\lambda_{1},x_{1})$. To that purpose form the residual vector
\[
r=A\tilde{x}_{1}-\tilde{\lambda}_{1}B\tilde{x}_{1}=(A-\tilde{A})\tilde{x}_{1}+\tilde{\lambda}_{1}(\tilde{B}-B)\tilde{x}_{1}.
\]
The standard a posteriori procedure is to find a matrix $E=E(\tilde{\lambda}_{1},\tilde{x}_{1})$
such that 
\begin{eqnarray}
(A+E)\tilde{x}_{1} & = & \tilde{\lambda}_{1}B\tilde{x}_{1},\label{eq:perturbed with B}\\
\|E\| & = & \|r\|.\nonumber 
\end{eqnarray}
Since we replaced in (\ref{eq:perturbed with B}) the perturbed matrix
$\tilde{B}$ by $B$, the final step is to reduce (\ref{eq:perturbed with B})
and (\ref{eq:ApGSEP}) to the standard eigenvalue problems using the
Cholesky decomposition of $B$. Then we can apply the standard error
bounds expressed in terms of the perturbation matrix $E$. We obtain
\begin{thm}
\label{thm:GHEP} There exists a normalized eigenpair $(\lambda_{i},x_{i})$,
$1\leq i\leq n$ such that
\begin{align*}
|\lambda_{i}-\tilde{\lambda}_{1}| & \leq\left\Vert B^{-1}\right\Vert \|r\|,\\
\|x_{i}-\tilde{x}_{1}\| & \leq\frac{2\sqrt{2\kappa(B)}}{\delta(\lambda_{i})}\left\Vert B^{-1}\right\Vert \|r\|.
\end{align*}
where $\kappa(B)=\|B\|\|B^{-1}\|$is the condition number of matrix
$B$ and $\delta(\lambda_{i})$ is the so called localizing distance,
i.e. $\delta(\lambda_{i})=\min_{j\neq i}\left|\lambda_{j}-\tilde{\lambda}_{1}\right|$.
\end{thm}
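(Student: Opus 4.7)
The plan is to follow the outline indicated just before the theorem statement: construct a rank-one perturbation $E$ of $A$ that absorbs the residual exactly, then use the Cholesky factorization of $B$ to reduce both the perturbed and unperturbed GSEPs to a standard Hermitian eigenvalue problem with a small (non-Hermitian) additive perturbation, to which classical bounds apply.

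Concretely, I would first set $E := -r\tilde{x}_{1}^{*}$; since $\|\tilde{x}_{1}\|=1$ this gives $E\tilde{x}_{1}=-r$ and $\|E\|=\|r\|$, so that $(A+E)\tilde{x}_{1}=\tilde{\lambda}_{1}B\tilde{x}_{1}$. Writing the Cholesky decomposition $B=DD^{*}$, the change of variable $u\mapsto D^{*}u$ transforms the unperturbed GSEP $Ax=\lambda Bx$ into the standard symmetric problem $Mz=\lambda z$ for the Hermitian matrix $M:=D^{-1}AD^{-*}$, whose eigenvalues are exactly $\lambda_{1}\geq\dots\geq\lambda_{n}$ and whose orthonormal eigenvectors $y_{i}$ are related to the Euclidean-normalized GSEP eigenvectors $x_{i}$ of the theorem through $x_{i}=D^{-*}y_{i}/\|D^{-*}y_{i}\|$. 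The same similarity turns the perturbed problem into $(M+\tilde{E})\tilde{w}=\tilde{\lambda}_{1}\tilde{w}$ with $\tilde{w}:=D^{*}\tilde{x}_{1}$ and $\tilde{E}:=D^{-1}ED^{-*}$, and $\|\tilde{E}\|\leq\|D^{-1}\|^{2}\|E\|=\|B^{-1}\|\,\|r\|$.

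For the eigenvalue bound I would invoke the Bauer--Fike theorem applied to $(M,M+\tilde{E})$: since $M$ is Hermitian and hence unitarily diagonalizable (condition number one), every eigenvalue of $M+\tilde{E}$ is within $\|\tilde{E}\|$ of an eigenvalue of $M$, which yields an index $i$ with $|\lambda_{i}-\tilde{\lambda}_{1}|\leq\|B^{-1}\|\,\|r\|$. For the eigenvector bound I pass to the unit vector $\hat{w}:=\tilde{w}/\|\tilde{w}\|$; from $(M-\tilde{\lambda}_{1}I)\tilde{w}=-\tilde{E}\tilde{w}$ one obtains $\|(M-\tilde{\lambda}_{1}I)\hat{w}\|\leq\|\tilde{E}\|$. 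Expanding $\hat{w}=\sum_{j}c_{j}y_{j}$ in the orthonormal eigenbasis of $M$ and reading off $\sum_{j}|c_{j}|^{2}(\lambda_{j}-\tilde{\lambda}_{1})^{2}\leq\|\tilde{E}\|^{2}$, the index $i$ minimizing $|\lambda_{j}-\tilde{\lambda}_{1}|$ satisfies $1-|c_{i}|^{2}\leq\|\tilde{E}\|^{2}/\delta(\lambda_{i})^{2}$, whence after the usual sign choice $\|\hat{w}-y_{i}\|\leq\sqrt{2}\,\|\tilde{E}\|/\delta(\lambda_{i})$. To translate back to the Euclidean-normalized vectors of the theorem, I would use $\tilde{x}_{1}=D^{-*}\hat{w}/\|D^{-*}\hat{w}\|$ and $x_{i}=D^{-*}y_{i}/\|D^{-*}y_{i}\|$ together with the elementary inequality $\bigl\|u/\|u\|-v/\|v\|\bigr\|\leq2\|u-v\|/\|u\|$ to obtain $\|\tilde{x}_{1}-x_{i}\|\leq 2\|D^{-*}\|\,\|\hat{w}-y_{i}\|/\|D^{-*}\hat{w}\|$. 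Since $\|D^{-*}\|=\|B^{-1}\|^{1/2}$ and $\|D^{-*}\hat{w}\|^{2}=\hat{w}^{*}B^{-1}\hat{w}\geq\|B\|^{-1}$, the ratio is bounded by $\kappa(B)^{1/2}$, which delivers the announced constant $2\sqrt{2\kappa(B)}\,\|B^{-1}\|/\delta(\lambda_{i})$.

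The main point requiring care is that the perturbation $\tilde{E}$ is not Hermitian, so Weyl's interlacing theorem is not directly available. This is precisely why the argument uses Bauer--Fike (which only requires $M$, not the perturbation, to be diagonalizable by a unitary) for the eigenvalue bound, and a residual-based Davis--Kahan-style expansion (which depends only on the Hermiticity of $M$) for the eigenvector bound. The remaining bookkeeping --- the Cholesky factors and the passage between Euclidean, $B$- and orthonormal-in-$M$ normalizations --- is what produces the factor $\sqrt{\kappa(B)}$ in the final eigenvector estimate.
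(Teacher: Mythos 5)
Your proof is correct and follows essentially the same route the paper sketches: absorb the residual into a rank-one perturbation $E$ with $\|E\|=\|r\|$, reduce both problems to a standard Hermitian eigenvalue problem via the Cholesky factor of $B$, and apply classical residual bounds, with all constants matching. The only (harmless) imprecision is writing $\|D^{-*}\hat{w}\|^{2}=\hat{w}^{*}B^{-1}\hat{w}$ where it actually equals $\hat{w}^{*}(D^{*}D)^{-1}\hat{w}$; since $D^{*}D$ and $DD^{*}=B$ have the same spectrum, the lower bound $\|B\|^{-1}$ and hence the factor $\sqrt{\kappa(B)}$ still follow.
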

The disadvantage of the above procedure is that we obtain an existence
result that gives no information how the eigenpair $(\lambda_{i},x_{i})$
is related to $(\lambda_{1},x_{1})$. This is a typical downside for
a posteriori methods that are supposed to provide information how
far the calculated solution is from the nearest exact solution but
are not intended to compare ordered eigenpairs. A helpful result is
the absolute Weyl theorem for generalized hermitian definite matrix
pairs, established by Y. Nakatsukasa \citep{Nakatsukasa:2010}. For
readers convenience we state below the theorem in the form presented
in \citep[Theorem 8.3]{Nakatsukasa:2011}.
\begin{thm}
\label{thm:Weyl GHEP} Let $\lambda_{1}\geq...\geq\lambda_{n}$ and
$\tilde{\lambda}_{1}\geq...\geq\tilde{\lambda}_{n}$ be respectively
exact and approximated eigenvalues of problems (\ref{eq:ApGSEP})
and (\ref{eq:ApPGSEP}). Denote $\Delta A=A-\tilde{A}$ and $\Delta B=B-\tilde{B}$.
Then
\begin{eqnarray*}
\left|\lambda_{i}-\tilde{\lambda}_{i}\right| & \leq & \left\Vert \tilde{B}^{-1}\right\Vert \left\Vert \Delta A-\lambda_{i}\Delta B\right\Vert ,\\
\left|\lambda_{i}-\tilde{\lambda}_{i}\right| & \leq & \left\Vert B^{-1}\right\Vert \left\Vert \Delta A-\tilde{\lambda}_{i}\Delta B\right\Vert ,
\end{eqnarray*}
for all $i=1,...,n.$
\end{thm}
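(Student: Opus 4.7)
The plan is to reduce both bounds to the Courant--Fischer variational principle for the generalized eigenvalue problem. Recall that for any real symmetric $M$ and positive-definite $N$, the $i$-th generalized eigenvalue of the pencil $(M,N)$ admits the dual characterisations
$$\lambda_{i}(M,N)=\max_{\dim S=i}\min_{0\neq x\in S}\frac{x^{T}Mx}{x^{T}Nx}=\min_{\dim T=n-i+1}\max_{0\neq x\in T}\frac{x^{T}Mx}{x^{T}Nx},$$
obtained by applying classical Courant--Fischer to the symmetric matrix $L^{-1}ML^{-T}$ with $N=LL^{T}$ the Cholesky factorisation. I would take this as the starting point.

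For the first bound $|\lambda_{i}-\tilde{\lambda}_{i}|\leq\|\tilde{B}^{-1}\|\cdot\|\Delta A-\lambda_{i}\Delta B\|$, let $x_{1},\dots,x_{n}$ denote the $B$-orthonormal eigenvectors of $(A,B)$ and set $S=\operatorname{span}(x_{1},\dots,x_{i})$. On this subspace one has $x^{T}(A-\lambda_{i}B)x\geq0$, since $x=\sum_{j\leq i}c_{j}x_{j}$ satisfies $x^{T}Ax=\sum_{j\leq i}c_{j}^{2}\lambda_{j}\geq\lambda_{i}\sum_{j\leq i}c_{j}^{2}=\lambda_{i}x^{T}Bx$. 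Writing $\tilde{A}=A-\Delta A$ and $\tilde{B}=B-\Delta B$ I would then estimate
$$x^{T}\tilde{A}x-\lambda_{i}x^{T}\tilde{B}x=x^{T}(A-\lambda_{i}B)x-x^{T}(\Delta A-\lambda_{i}\Delta B)x\geq-\|\Delta A-\lambda_{i}\Delta B\|\,\|x\|^{2}.$$
Converting $\|x\|^{2}$ via the standard positive-definite bound $\|x\|^{2}\leq\|\tilde{B}^{-1}\|\cdot x^{T}\tilde{B}x$, dividing by $x^{T}\tilde{B}x>0$, and then taking the minimum over $x\in S$ followed by the maximum over subspaces of dimension $i$ delivers $\tilde{\lambda}_{i}\geq\lambda_{i}-\|\tilde{B}^{-1}\|\cdot\|\Delta A-\lambda_{i}\Delta B\|$. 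The matching upper bound is obtained by the dual min-max with $T=\operatorname{span}(x_{i},\dots,x_{n})$, on which $x^{T}(A-\lambda_{i}B)x\leq0$, repeating the computation with the inequality direction flipped throughout.

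The second inequality then comes essentially for free by symmetry: swapping the roles of $(A,B)$ and $(\tilde{A},\tilde{B})$ sends $\Delta A\mapsto-\Delta A$ and $\Delta B\mapsto-\Delta B$ without changing their norms, replaces $\lambda_{i}$ by $\tilde{\lambda}_{i}$, and replaces $\|\tilde{B}^{-1}\|$ by $\|B^{-1}\|$. The only delicate step, and what makes Nakatsukasa's absolute Weyl theorem nontrivial in the general GHEP setting, is guaranteeing that the min-max argument really pairs up the $i$-th eigenvalues rather than eigenvalues in unrelated positions; here this is automatic because the trial subspace is chosen as precisely the top (or bottom) $i$-dimensional eigenspace of $(A,B)$, so Courant--Fischer matches indices consistently. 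The remaining estimates -- in particular $x^{T}(\Delta A-\lambda_{i}\Delta B)x\leq\|\Delta A-\lambda_{i}\Delta B\|\,\|x\|^{2}$ -- are routine operator-norm manipulations, so no further obstacle is anticipated.
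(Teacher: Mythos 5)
Your argument is correct. Note, however, that the paper does not prove this statement at all: Theorem~\ref{thm:Weyl GHEP} is quoted verbatim from Nakatsukasa (stated there as Theorem~8.3 of his 2011 thesis, with the proof in his 2010 paper), so there is no in-paper proof to compare against. Your reconstruction via the Courant--Fischer characterisation $\lambda_i(M,N)=\max_{\dim S=i}\min_{0\neq x\in S}\frac{x^TMx}{x^TNx}$ is sound at every step: the sign of $x^T(A-\lambda_iB)x$ on the span of the top (resp.\ bottom) eigenvectors is as you claim because the $x_j$ are $B$-orthogonal and simultaneously $A$-orthogonal; the perturbation term is controlled by $|x^T(\Delta A-\lambda_i\Delta B)x|\le\|\Delta A-\lambda_i\Delta B\|\,\|x\|^2$; the conversion $\|x\|^2\le\|\tilde B^{-1}\|\,x^T\tilde Bx$ is exactly the bound $\lambda_{\min}(\tilde B)=\|\tilde B^{-1}\|^{-1}$; and the index-matching worry you flag is indeed dispatched by using the eigenspaces of the unperturbed pencil as trial subspaces in the min-max for the perturbed pencil. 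The symmetry argument for the second inequality (swapping the pencils flips the signs of $\Delta A,\Delta B$ and exchanges $\lambda_i\leftrightarrow\tilde\lambda_i$, $\tilde B\leftrightarrow B$) is also valid. This min-max route is essentially the standard proof of the absolute Weyl theorem, so you have in effect supplied the missing proof rather than deviated from the paper.
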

\bibliographystyle{apalike}
\bibliography{bibliography}

\end{document}